\theoremstyle{plain}
\newtheorem{thm}{Theorem}[section]
\newtheorem{lemma}[thm]{Lemma}
\newtheorem{prop}[thm]{Proposition}
\newtheorem{cor}[thm]{Corollary}
\theoremstyle{definition}
\newtheorem{defn}[thm]{Definition}
\newtheorem{exmp}[thm]{Example}
\newtheorem{rmk}[thm]{Remark}
\newtheorem{alg}{Algorithm}
\newcommand{\m}{\mathfrak{m}}
\renewcommand{\k}{\mathbb{K}}
\renewcommand{\a}{\alpha}
\renewcommand{\b}{\beta}
\renewcommand{\d}{\delta}
\newcommand{\z}{\zeta}
\newcommand{\de}{\partial}
\newcommand{\se}{\subseteq}
\renewcommand{\bf}{\mathbf}
\newcommand{\x}{\bf{x}}
\newcommand{\tn}{\textnormal}
\newcommand{\Ann}{\tn{Ann}}
\newcommand{\rk}{\tn{rk}}
\newcommand{\Hom}{\tn{Hom}}
\newcommand{\kdim}{\dim_\tn{krull}}
\newcommand{\Z}{\mathbb{Z}}
\newcommand{\N}{\mathbb{N}}
\newcommand{\Cx}{\mathbb{C}}
\renewcommand{\H}{\mathbb{H}}
\newcommand{\rH}{\H^{\square}}
\newcommand{\1}{\mathbb{1}}
\newcommand{\0}{\mathbf{0}}
\newcommand{\VV}{\mathcal{V}}
\newcommand{\A}{\mathcal{A}}
\newcommand{\B}{\mathcal{B}}
\newcommand{\C}{\mathcal{C}}
\newcommand{\M}{\mathbb{M}}
\begin{document}

\begin{abstract}(EN)
We revise the famous algorithm for symmetric tensor decomposition due to Brachat, Comon, Mourrain and Tsidgaridas.
Afterwards, we generalize it in order to detect possibly different decompositions involving points on the tangential variety of a Veronese variety.
Finally, we produce an algorithm for cactus rank and decomposition, which also detects the support of the minimal apolar scheme and its length at each component. 
\\
\medskip
\\
(FR) Nous revenons sur le fameux algorithme de Brachat, Comon, Mourrain et 
Tsidgaridas pour la d\'composition des tenseurs sym\'etriques. Ensuite, 
nous le g\'en\'eralisons afin de d\'etecter de possibles d\'ecompositions 
diff\'erentes impliquant des points sur la vari\'et\'e tangentielle d'une 
vari\'et\'e de Veronese. Enfin, nous proposons un algorithme pour le rang et 
la d\'ecomposition cactus, qui, lui aussi, d\'etecte le support du sch\'ema 
apolaire minimal ainsi que sa longueur sur chaque composante.\end{abstract}

\begin{keyword}
Symmetric tensors, Tensor decomposition, Waring rank, Tangential rank, Cactus rank, Algorithms.
\MSC[2020] 14N07.
\end{keyword}

\begin{frontmatter}
\title{Waring, tangential and cactus decompositions}
\author{Alessandra Bernardi}
\ead{alessandra.bernardi@unitn.it}
\author{Daniele Taufer}
\ead{daniele.taufer@gmail.com}
\date{\today}
\end{frontmatter}
\tableofcontents

\section{Introduction}
Symmetric Tensor Decomposition (SymTD) is one of the most active research topic of the last decades and it has received many attentions both from the pure mathematical community and applied ones (signal processing \cite{comon}, phylogenetics \cite{ar}, quantum information \cite{glw,hjn,bercar}, computational complexity \cite{land}, geometric modeling of shapes \cite{ev}). The push towards the generation of algorithms that efficiently compute a specific type of decomposition of given symmetric tensors has not only a practical interest but also extremely deep theoretical facets.

The problem can be rephrased as follows: given a homogeneous polynomial $F$ of degree $d$ (i.e. an order-$d$ symmetric tensor), find the minimum number of linear forms $L_1, \ldots, L_r$ such that
$$F=\sum_{i=1}^r L_i^d.$$

Such a decomposition is known as \emph{symmetric tensor decomposition}, \emph{Waring decomposition} (this is the one we use all along the paper), \emph{symmetric rank decomposition}, \emph{minimal symmetric CP decomposition} or \emph{symmetric canonical polyadic decomposition}. The minimum integer $r$ realizing this decomposition is called the \emph{Waring rank} of $F$.

Despite this problem has many equivalent formulations in the tensor community, we state and study it only in terms of homogeneous polynomials in place of symmetric tensors since they may be easily identified.
This allows us to define and prove precisely the tools on which the proposed algorithms rely, by working within a purely algebraic frame.

\medskip

For binary forms, the solution to this problem is well-known from the late XIX century thanks to J.J. Sylvester \cite{Sy} and more recently revised in \cite{cs,bgi,SymTensorDec}. The first significant improvement for any number of variables was due to A. Iarrobino and V. Kanev \cite[Section 5.4]{IK} who extended Sylvester's idea to any number of variables (their approach is sometimes referred to as \emph{catalecticant method}). Their idea works only if the Waring rank of the given polynomial is equal to the rank of a certain matrix (which we call the ``largest numerical Hankel matrix'', see Section \ref{Sec:minr}).

In 2013 L. Oeding and G. Ottaviani used vector bundles techniques and representation theory to give an algorithm \cite[Algorithm 4]{oo} for Waring rank that, as the Iarrobino--Kanev idea, works only if the Waring rank of the polynomial is smaller than the rank of a matrix constructed with their techniques.

Nowadays, one of the best ideas to generalize those methods is due to J. Brachat, P. Comon, B. Mourrain and E. Tsidgaridas that in \cite{SymTensorDec} developed an algorithm that gets rid of the restrictions imposed by the usage of catalecticant matrices. Their idea is to employ the so-called \emph{Hankel matrix}, which in a way encodes the information of every catalecticant matrix. 
This idea has been used to generalize such an algorithm to other structured tensors (cf. \cite{BBCM, BBCM0,  IAB, ABMM}). 
A detailed presentation of this subject may be found in \cite{bccgo}.

It is worth noting that all the quoted algorithms are symbolic; nevertheless also a numerical algorithm \cite{bdhm} based on homotopy continuation running in Bertini system \cite{Bertini} has been developed.

\medskip

The first part of our paper consists of a revision of the Brachat--Comon--Mourrain--Tsidgaridas' algorithm: we propose various improvements, both from the theoretical point of view and of computational efficiency.

\medskip

The second part of the paper is devoted to a different kind of decomposition, which we call \emph{tangential decomposition}. We look for the minimal way of writing a given homogeneous polynomial $F$ of degree $d$ as 
\begin{equation}\label{tangential:decomp:intro}
    F=\sum_{i=1}^s L_i^{d-1}M_i
\end{equation}
where $L_i$'s and $M_i$'s are not necessarily distinct linear forms and the minimality is on the number of possibly repeated linear forms appearing in the decomposition.
We call it tangential decomposition since the projective classes of the addenda appearing in such a decomposition are points on the tangential variety of a Veronese variety \cite{cgg,bcgi1,bcgi2,av,bb3}.
We generalize the SymTD algorithm to explicitly detect tangential decompositions: cf. Section \ref{Sec:TangentialAlgorithm}.

\medskip

The last part of the present manuscript is devoted to a slightly different but related concept: the \emph{cactus rank} of a homogeneous polynomial. It was firstly introduced in \cite{Bernardi:2011vn,MR2842085} following the ideas of \cite{Buczynska:2010kx} but it was already present in the literature as \emph{scheme length}, cf. \cite{IK}.
The cactus rank has been defined as the minimal length of an apolar zero-dimensional scheme (cf. Section \ref{apolarity:section} for a formal definition of apolarity). In \cite[Theorem 3.7]{bbm} the cactus rank of a homogeneous polynomial $F$ was proved to coincide with the size of a generalized decomposition (cf. \eqref{cactus:decomp} in Theorem \ref{Thm:Structure}) of a certain map associated to $F$ and this is the definition of cactus rank that we use here (cf. Definition \ref{cactus:definition}). From a polynomial decomposition point of view, finding a \emph{cactus decomposition} of a given homogeneous polynomial $F$ amounts to writing it, in a certain minimal way, as
\begin{equation}\label{cactus:decomp:on:F}
    F=\sum_{i=1}^s L_i^{d-{k_i}}N_i,
\end{equation}
where the $L_i$'s are linear forms and the $N_i$'s are homogeneous polynomials of degree $k_i$ for certain $k_i < d$.
The minimality of the above decomposition is on the sum of the dimensions of the spaces of derivatives killing the $N_i$'s and such a minimum is the cactus rank of $F$.

The importance of the cactus rank of a polynomial is witnessed in various purview. First of all it is an appealing topic because of its interpretation as the length of certain Gorenstein zero-dimensional schemes \cite{IK,Buczynska:2010kx,Bernardi:2011vn,bjmr}.
Secondly, many results and algorithms for the Waring rank computation have been discovered by studying the relation between the Waring rank and the cactus rank of a given polynomial \cite{bgi,bb1,bb2,bj}. Moreover, it is connected to the study of joins of osculating varieties of Veroneseans \cite{bcgi1,bcgi2,ccgo,av,flos}. Last but not least, J.M. Landsberg in his recent book \cite[p.299]{land} states that the knowledge of the cactus rank of a generic cubic form (computed in \cite{Bernardi:2011vn}) implies the impossibility of proving superlinear border rank bounds for tensors with determinantal equations.

We conclude our paper by making use of the developed algebraic tools to produce a symbolic algorithm for computing the cactus rank of any homogeneous polynomial along with many information about the generalized decomposition, such as the linear forms $L_i$'s appearing in \eqref{cactus:decomp:on:F}, 
 a bound on their exponents $d-k_i$ and the dimension of the parameter space where each $N_i$ can be minimally looked for.
A reader who is familiar with the original definition of cactus rank may note that our last algorithm computes the support of the minimal apolar scheme together with the length of all the subschemes supported at a single point.

\subsection{Novel contribution}

The novel results of our work are numerous.
\\
An algorithm (Algorithm \ref{alg:waring}) for Waring decomposition is presented, it revises the one in \cite{SymTensorDec} in a multitude of aspects:
\begin{itemize}
\item using essential variables we avoid incorrect outputs (Section \ref{section:essential:variables}); 
\item starting with the maximal rank of the Hankel matrix we do not miss any good decomposition and reduce the number of iterations (Section \ref{Startingr});
\item restricting the criteria on bases to be tested, namely those which are complete staircases, we improve the algorithm performance. A rigorous proof that a minimal decomposition always arises from these bases is also provided (Section \ref{requirementB});
\item testing eigenvectors instead of looking at eigenvalues we always avoid outputs that are not minimal (Section \ref{section:Looking:eigenvectors}). Despite this procedure may seem unnecessarily difficult for \emph{generic} polynomials, we prove that it is theoretically necessary to ensure minimality.
\end{itemize}
Algorithm \ref{alg:tg} for tangential decomposition and Algorithm \ref{alg:cactus} for cactus decomposition are new, as well as the theorems on which they rely.

\subsection{Structure of the paper}
The paper is organized as follows: Section \ref{prelim} contains the algebraic tools needed by the algorithms. Section \ref{algo:section} is the core of our version of the SymTD algorithm. In Section \ref{algo:advantages} we analyze the advantages of our formulation of this algorithm. 
Section \ref{Section:Tangential} is devoted to the specific case of tangential decomposition: we give an algorithm for explicitly computing the minimal weighted $s$ for which the decomposition \eqref{tangential:decomp:intro} is possible, as well as recovering all the linear forms involved.
We conclude the paper with Section \ref{section:cactus:rank}, where we give an algorithm for computing the cactus rank, the linear forms involved in \eqref{cactus:decomp:on:F}, the length of the apolar scheme at each point of its support and the cactus decomposition of $F$.
We provide many non-trivial examples of the proposed algorithms, highlighting their crucial steps.

\section{Preliminaries}\label{prelim}

\subsection{Notation}
In this paper, $\k$ is an algebraically closed field of zero-characteristic, $n$ is a positive integer, $R = \k[\x] = \k[x_1, \dots, x_n]$ is the ring of polynomials in $n$ variables over $\k$ and $R^*$ its dual.
For any non-negative integer $d$ we also denote by $R_{\leq d}$ the $\k$-space of polynomials of total degree at most $d$ and with $R^h_d$ the homogeneous degree $d$ polynomials in $n+1$ variables $x_0, x_1, \dots, x_n$. For every $F \in R^h_d$ we denote by the lowercase letter $f \in R_{\leq d}$ the corresponding dehomogenization with respect to $x_0$, namely $f(x_1, \dots, x_n) = F(1, x_1, \dots, x_n)$.

Given a subset $J \se R$, we denote the affine variety defined by $J$ as $\VV(J) = \{P \in \k^n \ | \ \forall \ f \in J,\ f(P) = 0\}$.

Given a point $\z \in \k^n$, we denote by $\1_{\z} \in R^*$ the evaluation-in-$\z$ morphism. Moreover, for every operator $\Delta \in \Hom_{\k}(R)$ we use the subscript $\z$ to denote the post-composition with $\1_{\z}$, namely $\Delta_{\z} = \1_{\z} \circ \Delta \in R^*$.

Furthermore, given a polynomial $f \in R$, we denote the corresponding differential polynomial $f(\d) \in \Hom_{\k}(R)$, obtained by substituting the $i$-th variable in $f$ with the $i$-th partial derivation and by interpreting the product of derivations as the composition.

Finally, we make use of the standard multi-index notation: for every vector $\a = (\a_1, \dots, \a_n) \in \N^n$ we denote by $|\a| = \sum_{i = 1}^n \a_i$, $\a ! = \prod_{i = 1}^n (\a_i!)$ and when $d \geq |\a|$ we set ${d \choose \a} = \frac{d!}{\a! (d - |\a|)!}$.
We also use a short form $\d^{\a}$ to denote the differential monomial $\x^{\a}(\d) = \de_1^{\a_1} \circ \dots \circ \de_n^{\a_n}$.

\begin{defn} Let $F \in R^h_d$. We define the \emph{Waring rank} of $F$ as the minimal $r \in \N$ such that there exist $\{\lambda_1, \dots, \lambda_r\} \se \k$ and $\{L_1, \dots, L_r\} \se R^h_1$ with
\begin{equation*}
F = \sum_{i = 1}^r \lambda_i L_i^d.
\end{equation*}
Such a decomposition for which $r$ is minimal is called \emph{Waring decomposition}.
Similarly, a \emph{Waring decomposition} of $f \in R_{\leq d}$ is
\begin{equation*}
f = \sum_{i = 1}^r \lambda_i l_i^d.
\end{equation*}
with $\{\lambda_1, \dots, \lambda_r\} \se \k$, $\{l_1, \dots, l_r\} \se R_{\leq 1}$ and $r$ minimal.
\end{defn}

The Waring rank is well-defined, i.e. for every homogeneous polynomial a Waring  decomposition exists.
In fact, the $d$-th Veronese variety, which parameterizes projective classes of $d$-th powers of linear forms, is a complex non degenerate projective variety in $\mathbb{P}^{ {n+d \choose d} }(\k)$.
However, it is also known that this Waring  decomposition might well be not unique (cf. \cite{Sy,Hi,Ri,Pa,Me,gm,Ci,RV}).

\subsection{Algebraic tools}

We need the classical characterization of zero-dimensional ideals, which we summarize in the following theorem.

\begin{thm}[Zero-dimensional ideals] \label{thm:EquivCharactZeroDim} Let $I$ be a proper ideal of $R$. The following are equivalent.
\begin{enumerate}[(i)]
\item $I$ is zero-dimensional, i.e. $\kdim R/I = 0$.
\item $\dim_\k (R/I) < \infty$.
\item $\#\VV(I) < \infty$.
\end{enumerate}
Moreover, if $I$ is zero-dimensional, we have
\begin{equation*}
 \#\VV(I) \leq \dim_\k (R/I),
\end{equation*}
which is an equality if and only if $I$ is also radical.
\end{thm}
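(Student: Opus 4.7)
The plan is to prove the three equivalences by a short chain of implications, combining basic commutative algebra with the Nullstellensatz, and then to read off the length estimate from the Artinian structure of $R/I$.

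First, I would settle (i)$\Leftrightarrow$(ii). Since $R$ is Noetherian, so is $R/I$; and a Noetherian ring of Krull dimension zero is Artinian. A finitely generated $\k$-algebra that is Artinian is automatically a finite-dimensional $\k$-vector space (one way: it is a finite product of Artinian local rings whose residue fields are finite over $\k$, hence equal to $\k$ by weak Nullstellensatz, and each local factor is finite over its residue field). Conversely, if $\dim_\k(R/I)<\infty$, then $R/I$ is certainly Artinian, and every Artinian ring has Krull dimension zero.

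Next I would close the loop with (i)$\Leftrightarrow$(iii). By Hilbert's Nullstellensatz the points of $\VV(I)$ are in bijection with the maximal ideals of $R$ containing $I$, equivalently the maximal ideals of $R/I$. If $R/I$ has Krull dimension zero, then it is Artinian (as above) and has only finitely many maximal ideals, so $\#\VV(I)<\infty$. Conversely, if $\VV(I)=\{P_1,\dots,P_m\}$ is finite, the Nullstellensatz gives $\sqrt{I}=\bigcap_{j=1}^m \m_{P_j}$; any prime $\q\supseteq I$ then satisfies $\q\supseteq \sqrt{I}$ and, being prime, contains some $\m_{P_j}$, hence equals it. So every prime containing $I$ is maximal, i.e. $\kdim R/I=0$.

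For the "moreover" part, assume $I$ is zero-dimensional. Because all primes containing $I$ are maximal, every element of $\Ass(I)$ is a maximal ideal, and so $\Ass(I)$ is in bijection with $\VV(I)$. The Artinian ring $R/I$ decomposes canonically as a finite product of its localizations at the associated primes,
\begin{equation*}
R/I \;\cong\; \prod_{\p\in\Ass(I)} (R/I)_{\p},
\end{equation*}
where each $(R/I)_{\p}$ is an Artinian local $\k$-algebra with residue field $\k$ (Nullstellensatz). Hence $\dim_\k (R/I)_{\p}\geq 1$, with equality exactly when this local factor is a field. Summing yields
\begin{equation*}
\dim_\k(R/I)\;=\;\sum_{\p\in\Ass(I)}\dim_\k (R/I)_{\p}\;\geq\;\#\Ass(I)\;=\;\#\VV(I),
\end{equation*}
and equality holds iff every local factor is a field, iff $R/I$ is reduced, iff $I$ is radical.

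No single step is genuinely difficult: the argument is an orchestration of standard facts. The main obstacle, and the only place one must be careful, is the identification of $\Ass(I)$ with $\VV(I)$ in the zero-dimensional case — without it, embedded primes would complicate the bound. Everything else (Noetherian$+$dim $0$ $\Rightarrow$ Artinian, structure of Artinian rings, Nullstellensatz for maximal ideals and radicals) can be cited verbatim.
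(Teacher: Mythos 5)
Your argument is correct, and the paper itself does not give a proof of Theorem~\ref{thm:EquivCharactZeroDim} — it simply points to Cox--Little--O'Shea and Atiyah--MacDonald. Your route is essentially the Atiyah--MacDonald one: Noetherian plus Krull dimension zero gives Artinian, the structure theorem decomposes $R/I$ as a finite product of local Artinian $\k$-algebras, and Zariski's lemma (weak Nullstellensatz, since $\k$ is algebraically closed) identifies each residue field with $\k$, from which the vector-space bound $\#\VV(I)\leq\dim_\k(R/I)$ and the equality criterion drop out. The alternative route in the Cox--Little--O'Shea references is the Gröbner-basis ``Finiteness Theorem'': $\VV(I)$ is finite iff for every $i$ some power of $x_i$ lies in the initial ideal of $I$, iff the set of standard monomials is finite, iff $\dim_\k R/I<\infty$; the inequality then comes from comparing standard monomials with interpolation at the finitely many points. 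The two proofs buy different things — yours is structural and naturally feeds into the primary decomposition and multiplicity language used later in the paper (Theorem~\ref{Thm:Structure}, Corollary~\ref{corMu}), while the Gröbner version is algorithmically explicit, which fits the computational spirit of Section~\ref{algo:section}. One small point worth making explicit in your final step: ``every associated prime is maximal'' gives $\Ass(I)\subseteq\VV(I)$, but to conclude $\Ass(I)=\VV(I)$ you should also note that every maximal ideal over $I$ is \emph{minimal} over $I$ (there is nothing strictly below it containing $I$), and minimal primes are always associated; as written the ``and so'' slightly elides this direction.
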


\proof See \cite{UsingGeometry}, \cite{Atiyah} or Theorem 6, Proposition 8 in Section 5.3 of \cite{IdealVarieties}.
\endproof

Given an $R$-algebra $\A$, its dual $\A^*$ has a natural $\A$-module structure given for every $a \in \A$ and $\Lambda \in \A^*$ by $a \star \Lambda = \big(b \mapsto \Lambda(ab)\big) \in \A^*$.
Thus, we define the multiplication operators by $a \in \A$ as
\begin{align*}
 M_a : \A &\to \A, & M_a^t : \A^* &\to \A^*,\\
 b &\mapsto ab, & \varphi &\mapsto a \star \varphi.
\end{align*}

\begin{defn}\label{def:Hankel} Let $\Lambda \in R^*$. We define its \emph{Hankel operator} to be the $R$-module morphism
 \begin{align*}
  H_{\Lambda} : R &\to R^*,\\
  f &\mapsto f \star \Lambda.
 \end{align*}
 Moreover, we denote its kernel by $I_{\Lambda} = \ker H_{\Lambda}$.
\end{defn}

For every $\Lambda \in R^*$ we see that $I_{\Lambda}$ is an ideal of $R$ and by defining $\A_{\Lambda} = R/I_{\Lambda}$ we have $\rk H_{\Lambda} = \dim_{\k} \A_{\Lambda}$.

Let now $I \se R$ be a zero-dimensional ideal, so that $\VV(I) = \{\z_1, \dots, \z_d\}$.
Since for every $i \in \{1, \dots, d\}$ we have $I \se \ker \1_{\z_i}$, then we may consider the restrictions of $\1_{\z_i}$ to $\A = R/I$, that we denote in the same way with a slight abuse of notation.

\begin{thm}\label{Eigenvectors} Let $I\subseteq R$ be a zero-dimensional ideal, with $\VV(I) = \{\z_1, \dots, \z_d\}$. For every $a \in \A = R/I$ the following hold.
\begin{enumerate}[(i)]
    \item The eigenvalues of $M_{a}$ and $M_{a}^t$ are $\{a(\z_1), \dots, a(\z_d)\}$.
    \item An element $v \in \A^*$ is an eigenvector for every $\{M_{x_i}^t\}_{i \in \{1, \dots, n\}}$ if and only if there are $j \in \{1, \dots, d\}$ and $k \in \k\setminus\{0\}$ such that $v = k \1_{\z_j}$.
\end{enumerate}
\end{thm}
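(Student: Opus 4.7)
My plan is to treat the two parts separately, using the primary decomposition of $I$ as the unifying tool.

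For (i), I would first observe the elementary computation that for each $j$, the evaluation functional $\1_{\z_j} \in \A^*$ is an eigenvector of $M_a^t$ with eigenvalue $a(\z_j)$:
\begin{equation*}
(M_a^t \1_{\z_j})(b) = \1_{\z_j}(ab) = a(\z_j)\,b(\z_j) = a(\z_j)\,\1_{\z_j}(b).
\end{equation*}
This shows $\{a(\z_1), \dots, a(\z_d)\} \subseteq \mathrm{Spec}(M_a^t) = \mathrm{Spec}(M_a)$ (same characteristic polynomial). To get the reverse inclusion I would invoke the primary decomposition $I = \bigcap_{j=1}^{d} Q_j$ with $Q_j$ being $\m_{\z_j}$-primary (available because $I$ is zero-dimensional by Theorem~\ref{thm:EquivCharactZeroDim}) and the Chinese Remainder Theorem, yielding $\A = R/I \cong \prod_{j=1}^{d} R/Q_j$. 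Each $R/Q_j$ is a local Artinian $\k$-algebra whose maximal ideal $\m_{\z_j}/Q_j$ is nilpotent, hence $a - a(\z_j)$ acts nilpotently on $R/Q_j$, so the restriction of $M_a$ to that factor has the single eigenvalue $a(\z_j)$. Taking the direct sum over $j$ yields exactly $\{a(\z_1),\dots,a(\z_d)\}$ as the spectrum.

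For (ii), the direction ``$\Leftarrow$'' is the computation above applied to each $x_i$ in turn. For ``$\Rightarrow$'', assume $M_{x_i}^t v = \lambda_i v$ for all $i$ and set $\lambda = (\lambda_1,\dots,\lambda_n)$. By the definition of $M_{x_i}^t$ one has $v(x_i b) = \lambda_i v(b)$ for every $b \in \A$, and by iteration $v(\x^{\a} b) = \lambda^{\a} v(b)$ for every multi-index $\a$. Linearity extends this to $v(fb) = f(\lambda)\, v(b)$ for all $f \in R$, and specialising $b = 1$ gives $v(f) = v(1)\, f(\lambda)$, i.e.\ $v = v(1)\cdot \1_{\lambda}$ as functionals on $\A$.

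It remains to argue $\lambda \in \VV(I)$ and $v(1) \neq 0$. Since $v$ is a well-defined functional on $\A = R/I$, we must have $v(g) = 0$ for every $g \in I$; the identity $v(g) = v(1)\, g(\lambda)$ forces $g(\lambda) = 0$ for all $g \in I$ as soon as $v(1)\neq 0$, so $\lambda$ is one of the $\z_j$. The non-vanishing of $v(1)$ is automatic: if $v(1)=0$ then the relation $v(f)=v(1)f(\lambda)$ would make $v$ identically zero, contradicting the assumption that $v$ is an eigenvector. Setting $k = v(1)$ and $\z_j = \lambda$ produces the claimed form $v = k \1_{\z_j}$ with $k \in \k\setminus\{0\}$. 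The only non-routine step in the whole argument is handling the possibly non-radical case in (i); the primary decomposition plus the nilpotence of each local maximal ideal is precisely what one needs there.
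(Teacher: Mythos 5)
Your proof is correct: the paper itself defers to standard references (Cox--Little--O'Shea and Elkadi--Mourrain) rather than giving its own argument. The route you take --- primary decomposition plus the Chinese Remainder Theorem to reduce part~(i) to nilpotence of $a-a(\z_j)$ on each local Artinian factor $R/Q_j$, and the direct multiplicative identity $v(fb)=f(\lambda)v(b)$ followed by the observation that well-definedness on $\A=R/I$ forces $\lambda\in\VV(I)$ for part~(ii) --- is exactly the standard argument found in those references.
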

\proof Part $(i)$ is Theorem (4.5) in Chapter 2, \S 4 of \cite{UsingGeometry}. Both parts are proved in \cite[Thm. 4.23]{FrenchRef}.
%
\endproof

\begin{cor}\label{RadicalDistinctEigenvectors} Let $\Lambda \in R^*$ such that $A_{\Lambda}$ is an $r$-dimensional $\k$-vector space. Then the following are equivalent.
\begin{enumerate}[(i)]
    \item Up to $\k$-multiplication, there are exactly $r$ distinct common eigenvectors of $\{M_{x_i}^t\}_{i \in \{1, \dots, n\}}$.
    \item $I_{\Lambda}$ is radical.
\end{enumerate}
\end{cor}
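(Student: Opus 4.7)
The plan is to reduce both conditions to a single numerical equality, namely $\#\VV(I_{\Lambda}) = r$, by using the two preceding results. First I would observe that since $\dim_{\k} \A_{\Lambda} = r < \infty$, Theorem \ref{thm:EquivCharactZeroDim} tells us $I_{\Lambda}$ is zero-dimensional and that $\#\VV(I_{\Lambda}) \leq r$, with equality precisely when $I_{\Lambda}$ is radical. So (ii) translates into $\#\VV(I_{\Lambda}) = r$.

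Next I would translate (i) into the same equality using Theorem \ref{Eigenvectors}(ii) applied to the ideal $I_{\Lambda}$ (which is zero-dimensional as just noted). Writing $\VV(I_{\Lambda}) = \{\z_1, \dots, \z_d\}$, that theorem says the common eigenvectors of the family $\{M_{x_i}^t\}_{i \in \{1, \dots, n\}}$ acting on $\A_{\Lambda}^*$ are, up to a nonzero scalar, exactly $\1_{\z_1}, \dots, \1_{\z_d}$. Since the $\z_j$ are pairwise distinct points of $\k^n$, the corresponding evaluation functionals lie in $d$ different one-dimensional subspaces of $\A_{\Lambda}^*$, so the number of common eigenvectors up to $\k$-multiplication is exactly $d = \#\VV(I_{\Lambda})$. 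Therefore (i) is the statement $d = r$.

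Combining the two reductions gives the equivalence of (i) and (ii). The whole proof is essentially bookkeeping on top of Theorems \ref{thm:EquivCharactZeroDim} and \ref{Eigenvectors}; the only non-formal point is to verify that the $\1_{\z_j}$ give $d$ genuinely distinct one-dimensional eigenspaces rather than fewer, and this is immediate from the fact that distinct points of $\k^n$ are separated by polynomial evaluation. I do not foresee a real obstacle here.
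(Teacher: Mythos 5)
Your argument is correct and follows essentially the same route as the paper's own proof: both reduce (ii) to $\#\VV(I_{\Lambda}) = r$ via Theorem~\ref{thm:EquivCharactZeroDim} and then identify the common eigenvectors with the evaluation functionals $\1_{\z_j}$ via Theorem~\ref{Eigenvectors}. The only addition is your explicit remark that distinct points yield distinct eigenspaces, a point the paper leaves implicit.
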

\proof By Theorem \ref{thm:EquivCharactZeroDim} since $\dim_{\k} A_{\Lambda} = r$ then $I_{\Lambda}$ is zero-dimensional so $\VV(I_{\Lambda}) = \{\z_1, \dots, \z_d\}$ and it is radical if and only if $r = d$.
By Theorem \ref{Eigenvectors} we have that, up to scalar multiplication, the distinct eigenvectors common to $\{M_{x_i}^t\}_{i \in \{1, \dots, n\}}$ are $\{\1_{\z_1}, \dots, \1_{\z_d}\}$, thus $I_{\Lambda}$ is radical if and only if these common eigenvectors are exactly $r$.
\endproof

\subsection{Apolarity} \label{apolarity:section}

\begin{defn}\label{apolar:ideal} Given a set $S \se R$, its \emph{apolar set} is
\begin{equation*}
 S^{\perp} = \{\Lambda \in R^* \ | \ \forall f \in S, \ \Lambda(f) = 0\}.
\end{equation*}
If $I \se R$ is an ideal, $I^{\perp}$ is referred to as its \emph{apolar ideal}.
\end{defn}

For every $\z = (\z_1, \dots, \z_n) \in \k^n$ let $\m_{\z} = (x_1 - \z_1, \dots, x_n - \z_n) \se R$ be the corresponding maximal ideal. The apolar ideal of any zero-dimensional ideal is completely determined in terms of the apolar ideals of its primary components.

\begin{thm} \label{Thm:Structure} Let $I \subseteq R$ be a zero-dimensional ideal, $\VV(I) = \{\z_1, \dots, \z_d\}$. Then the minimal primary decomposition of $I$ is given by $I = Q_1 \cap \dots \cap Q_d$ where $Q_i$ is $\m_{\z_i}$-primary and
\begin{equation*}
 I^{\perp} = Q_1^{\perp} \oplus \dots \oplus Q_d^{\perp}.
\end{equation*}
Furthermore, for every $\Lambda \in I^{\perp}$ there are $\{p_i\}_{i \in \{1, \dots, d\}} \subseteq R$ such that
\begin{equation}\label{cactus:decomp}
 \Lambda = \sum_{i = 1}^d \1_{\z_i} \circ p_i(\d).
\end{equation}
Moreover, if $I$ is also radical then $\{p_i\}_{i \in \{1, \dots, d\}} \subseteq \k$ are all constants.
\end{thm}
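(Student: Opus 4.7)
The plan is to address the four claims in order, using primary decomposition for Artinian algebras, the Chinese Remainder Theorem, and the classical Macaulay inverse system pairing between local Artinian quotients of $R$ and spaces of differential operators evaluated at a point.

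First, since $I$ is zero-dimensional, Theorem \ref{thm:EquivCharactZeroDim} gives that $R/I$ is a finite-dimensional $\k$-algebra, hence an Artinian ring. Any such ring decomposes uniquely as a finite product of local Artinian rings, whose residue fields are in bijection with $\VV(I)$; pulling this decomposition back yields the minimal primary decomposition $I = Q_1 \cap \dots \cap Q_d$ with $Q_i$ being $\m_{\z_i}$-primary. Because the radicals $\m_{\z_i}$ are pairwise distinct maximal ideals, the $Q_i$'s are pairwise coprime, and the Chinese Remainder Theorem provides a $\k$-algebra isomorphism $R/I \xrightarrow{\sim} \prod_{i=1}^d R/Q_i$.

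For the second claim, I would identify $I^{\perp}$ with $(R/I)^*$ via precomposition with the quotient $R \twoheadrightarrow R/I$, and similarly $Q_i^{\perp}$ with $(R/Q_i)^*$. Dualizing the isomorphism of the previous paragraph gives $(R/I)^* \cong \bigoplus_i (R/Q_i)^*$, which when transferred back into $R^*$ reads exactly as $I^{\perp} = Q_1^{\perp} \oplus \dots \oplus Q_d^{\perp}$: every $\Lambda \in I^{\perp}$ splits uniquely as $\Lambda = \sum_i \Lambda_i$ with $\Lambda_i \in Q_i^{\perp}$. The dimension count $\dim_{\k} R/I = \sum_i \dim_{\k} R/Q_i$ confirms this splitting is complete.

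The heart of the proof is the third claim, for which I would establish the local Macaulay duality: a functional $\Lambda \in R^*$ vanishes on $\m_{\z}^{k+1}$ if and only if $\Lambda = \1_{\z} \circ p(\d)$ for some $p \in R_{\leq k}$. To prove it, take the basis $\{(\x - \z)^{\a}\}_{|\a| \leq k}$ of $R/\m_{\z}^{k+1}$ and compute directly, using the Leibniz rule, that
\[
\1_{\z} \circ \d^{\b}\bigl((\x - \z)^{\a}\bigr) = \a! \, \delta_{\a, \b}
\quad\text{for } |\a|, |\b| \leq k,
\]
so that $\bigl\{\tfrac{1}{\b!} \1_{\z} \circ \d^{\b}\bigr\}_{|\b| \leq k}$ is a $\k$-basis of $(\m_{\z}^{k+1})^{\perp}$ dual to the chosen basis of $R/\m_{\z}^{k+1}$. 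Since each $Q_i$ is $\m_{\z_i}$-primary inside the Artinian quotient $R/I$, it contains a power $\m_{\z_i}^{k_i+1}$, and hence $Q_i^{\perp} \subseteq (\m_{\z_i}^{k_i+1})^{\perp}$; applying the lemma gives $\Lambda_i = \1_{\z_i} \circ p_i(\d)$ for a suitable polynomial $p_i$ of degree at most $k_i$. Summing over $i$ produces formula \eqref{cactus:decomp}. Finally, in the radical case $Q_i = \m_{\z_i}$, so $R/Q_i \cong \k$ and $\m_{\z_i}^{\perp}$ is one-dimensional, spanned by $\1_{\z_i}$ itself; hence each $p_i$ reduces to a constant.

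The main technical obstacle is establishing the Macaulay duality step cleanly: one must choose the local basis $(\x - \z)^{\a}$ (rather than the standard monomials $\x^{\a}$) to make the pairing with $\1_{\z} \circ \d^{\b}$ diagonal, and verify nondegeneracy of this pairing on each truncation $R/\m_{\z}^{k+1}$. Once this local statement is in hand, the remaining assertions follow by combining it with the Chinese Remainder decomposition of the first two steps.
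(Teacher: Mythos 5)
Your proposal is correct and is, in essence, the standard derivation that the paper's cited reference (Elkadi--Mourrain, Theorems 7.34 and 7.5) relies on: the Artinian/Chinese Remainder decomposition gives the minimal primary decomposition and the direct-sum splitting $I^{\perp}=\bigoplus_i Q_i^{\perp}$, while the Macaulay local duality --- built on the diagonal pairing $\1_{\z}\circ\d^{\b}\bigl((\x-\z)^{\a}\bigr)=\a!\,\delta_{\a,\b}$ between the local basis $\{(\x-\z)^{\a}\}_{|\a|\le k}$ of $R/\m_{\z}^{k+1}$ and the shifted differential operators --- identifies each $Q_i^{\perp}\subseteq(\m_{\z_i}^{k_i+1})^{\perp}$ with operators of the form $\1_{\z_i}\circ p_i(\d)$, with $p_i$ a constant in the radical case. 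The paper itself only cites the reference, so your self-contained proof is a faithful expansion of that route; all the steps (the vanishing of $\1_{\z}\circ\d^{\b}$ on $\m_{\z}^{k+1}$ for $|\b|\le k$, the nondegeneracy of the truncated pairing, the containment of a power of $\m_{\z_i}$ in each primary component by nilpotency of the maximal ideal of the local Artinian factor, and the one-dimensionality of $\m_{\z_i}^{\perp}$ when $I$ is radical) are correctly stated and justified.
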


\proof See \cite[Theorems 7.34 and 7.5]{FrenchRef}.
\endproof

In the setting of our algorithms the considered ideal $I$ will be $I_{\Lambda}$. By definition $\Lambda \in I_{\Lambda}^{\perp}$, hence by Theorem \ref{Thm:Structure} we have decompositions of $\Lambda$ as in \eqref{cactus:decomp}.
The proof of Theorem \ref{Thm:Structure} given in \cite{FrenchRef} also shows that
\begin{equation*}
    \dim_{\k} Q_i^{\perp} = \dim_{\k} \langle \{ \1_{\z_i} \circ \de^{\alpha}p_i \}_{|\alpha|\leq \deg p_i}\rangle_{\k}.
\end{equation*}
The above quantity is usually called the \emph{multiplicity of} $\z_i$.
A decomposition as in \eqref{cactus:decomp} for which the sum of the multiplicities $r = \sum_{i=1}^d \dim_{\k} Q_i^{\perp} = \dim_{\k} I_{\Lambda}^{\perp}$ is minimal is called \emph{generalized decomposition of $\Lambda$} (cf. \cite{SymTensorDec,bbm}) and such an $r$ is referred to as the \emph{size} of the generalized decomposition.

We want to exploit the knowledge of inverse systems to address the Waring decomposition problem, by formulating an equivalent decomposition problem in the dual space. For this reason we associate a linear form to every polynomial by defining the apolar product over $R_{\leq d}$ as
\begin{equation*}
\left\langle \sum_{|\a| \leq d} f_{\a}\x^{\a}, \sum_{|\a| \leq d} g_{\a}\x^{\a} \right\rangle = \sum_{|\a| \leq d} \frac{f_{\a}g_{\a}}{ {d \choose \a} }.
\end{equation*}

\begin{defn} Let $f \in R_{\leq d}$. We define its \emph{dual polynomial} $f^* \in R_{\leq d}^*$ as
\begin{align*}
f^* : R_{\leq d} &\to \k,\\
g &\mapsto \langle f, g \rangle .
\end{align*}
We also define the \emph{dual map} as
\begin{align*}
\tau : R_{\leq d} &\to R_{\leq d}^*,\\
f &\mapsto f^* .
\end{align*}
\end{defn}
It is easy to see that the apolar product is a $\k$-bilinear, symmetric and non-degenerate form on $R_{\leq d}$, hence $\tau$ is an injective morphism of $\k$-modules.

\begin{prop}\label{Prop:DualLinearAffine} Let $l = 1 + l_1x_1 + \dots + l_nx_n \in R_{\leq 1}$. For every $f \in R_{\leq d}$ we have
\begin{equation*}
\tau( l^d )(f) = \1_{(l_1, \dots, l_n)}(f).
\end{equation*}
\end{prop}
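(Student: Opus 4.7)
The plan is to compute $\tau(l^d)(f) = \langle l^d, f \rangle$ directly from the definition of the apolar product and observe that the binomial coefficients appearing in the multinomial expansion of $l^d$ cancel exactly against the binomial coefficients in the denominator of the bilinear form, leaving precisely the evaluation of $f$ at the point $(l_1, \dots, l_n)$.

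First, I would expand $l^d$ using the multinomial theorem. Writing $l = 1 + l_1 x_1 + \dots + l_n x_n$ and grouping the terms by the multi-index $\a = (\a_1, \dots, \a_n)$ of the $\x$-monomial (so the exponent of the constant $1$ is $d - |\a|$), one obtains
\begin{equation*}
l^d = \sum_{|\a| \leq d} \binom{d}{\a} l^{\a} \x^{\a},
\end{equation*}
where $l^{\a} = l_1^{\a_1} \cdots l_n^{\a_n}$ and the multinomial coefficient $\binom{d}{\a} = \frac{d!}{\a!(d-|\a|)!}$ is the one defined in the notation subsection.

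Next, I would write $f = \sum_{|\a| \leq d} f_{\a} \x^{\a}$ and plug both expansions into the definition of the apolar product. By $\k$-bilinearity,
\begin{equation*}
\tau(l^d)(f) = \langle l^d, f \rangle = \sum_{|\a| \leq d} \frac{\binom{d}{\a} l^{\a} \, f_{\a}}{\binom{d}{\a}} = \sum_{|\a| \leq d} l^{\a} f_{\a} = f(l_1, \dots, l_n),
\end{equation*}
which is the claimed equality $\1_{(l_1, \dots, l_n)}(f)$.

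There is no real obstacle here: the statement is essentially a bookkeeping identity, and the whole point of normalizing the apolar product by the factor $\binom{d}{\a}^{-1}$ is precisely to make this evaluation property hold. The only thing to be careful about is the affine (non-homogeneous) nature of $l$, which forces us to treat the constant term $1$ on equal footing with the $l_i x_i$ in the multinomial expansion and to sum over all $\a$ with $|\a| \leq d$ rather than $|\a| = d$; this matches the convention adopted in the paper's definition of the apolar product on $R_{\leq d}$.
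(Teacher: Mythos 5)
Your proof is correct and follows essentially the same route as the paper: expand $l^d$ by the multinomial theorem, substitute into the apolar product, observe that the ${d \choose \a}$ coefficients cancel, and recognize the remaining sum as the evaluation of $f$ at $(l_1,\dots,l_n)$. The paper's proof is exactly this computation.
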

\proof By a straightforward computation we get
\begin{equation*}
l^d = \sum_{|\a| \leq d} {d \choose \a} (l_1, \dots, l_n)^{\a}\x^{\a}.
\end{equation*}
Thus, for every $f = \sum_{|\a| \leq d} f_{\a}\x^{\a} \in R_{\leq d}$ we have
\begin{equation*}
(l^d)^* (f) = \left\langle \sum_{|\a| \leq d} {d \choose \a} (l_1, \dots, l_n)^{\a}\x^{\a}, \sum_{|\a| \leq d} f_{\a}\x^{\a} \right\rangle = \sum_{|\a| \leq d} f_{\a}(l_1, \dots, l_n)^{\a},
\end{equation*}
which is exactly the evaluation of $f$ in $(l_1, \dots, l_n)$.
\endproof

We abbreviate the above notation by writing $\1_{l}(f)$ in place of $\1_{(l_1, \dots, l_n)}(f)$.

\section{Waring decomposition algorithm}\label{algo:section}

\subsection{Problem reformulation}

By a generic change of coordinates, finding a Waring decomposition of a given $F \in R^h_d$ is equivalent to finding a Waring decomposition of the corresponding $f \in R_{\leq d}$.

Since the dual map is $\k$-linear and injective, the problem of finding a Waring decomposition of $f \in R_{\leq d}$ is equivalent to finding the minimal $r \in \N$ and $\{\lambda_1, \dots, \lambda_r\} \se \k$, $\{l_1, \dots, l_r\} \se R_{\leq 1}$ such that
\begin{equation*}
f^* = \tau(f) = \tau\left(\sum_{i = 1}^r \lambda_i l_i^d\right) = \sum_{i = 1}^r \lambda_i \1_{l_i},
\end{equation*}
where the last equality follows from Proposition \ref{Prop:DualLinearAffine}.

Instead of searching for $f^* \in R_{\leq d}^*$, we look for a $\Lambda = \sum_{i = 1}^r \lambda_i \1_{l_i} \in R^*$ which \emph{extends}\label{extension} $f^*\in R^*_{\leq d}$, meaning that $\Lambda(x^{\alpha})=f^*(x^\alpha)$ for every $\alpha\in \mathbb{N}^n$ with $|\alpha |\leq d$, or equivalently the degree $\leq d$ part of $\Lambda$ coincides with $f^*$.

Once such $\Lambda$ is found, by restricting the evaluation maps we forthwith find  $\Lambda|_{R_{\leq d}} = f^* = \sum_{i = 1}^r \lambda_i \1_{l_i} \in R_{\leq d}^*$, which leads to a Waring decomposition of $f$.

The problem of finding such an extension $\Lambda \in R^*$ is equivalent to finding $\Lambda \in R^*$ which has prescribed properties on its Hankel operator.

\begin{thm}\label{PbReformulation} Let $\Lambda \in R^*$. The following are equivalent.
\begin{enumerate}[(i)]
    \item There exist non-zero constants $\{\lambda_1, \dots, \lambda_r\} \se \k$ and distinct points $\{\z_1, \dots, \z_r\} \se \k^n$ such that
\begin{equation*}
\Lambda = \sum_{i = 1}^r \lambda_i \1_{\z_i},
\end{equation*}
    \item $\rk H_{\Lambda} = r$ and $I_{\Lambda}$ is radical.
\end{enumerate}
\end{thm}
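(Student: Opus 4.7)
The plan is to handle the two implications separately, leveraging the decomposition theorem for apolar ideals (Theorem \ref{Thm:Structure}) for the hard direction and a direct Lagrange interpolation argument for the easy one.

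For (i) $\Rightarrow$ (ii), I would compute $H_{\Lambda}$ explicitly: for every $f, g \in R$,
\begin{equation*}
H_{\Lambda}(f)(g) = \Lambda(fg) = \sum_{i=1}^r \lambda_i f(\z_i) g(\z_i),
\end{equation*}
so $H_{\Lambda}(f) = \sum_{i=1}^r \lambda_i f(\z_i) \1_{\z_i}$, which lies in the $r$-dimensional $\k$-span of $\{\1_{\z_1}, \dots, \1_{\z_r}\}$. Since the $\z_i$ are pairwise distinct, Lagrange interpolation produces polynomials $p_j \in R$ with $p_j(\z_i) = \delta_{ij}$, and then $H_{\Lambda}(p_j) = \lambda_j \1_{\z_j}$ is nonzero because $\lambda_j \neq 0$. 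Hence $\rk H_{\Lambda} = r$. Moreover, $f \in I_{\Lambda}$ forces $\lambda_j f(\z_j) = H_{\Lambda}(f)(p_j) = 0$ for all $j$, so $I_{\Lambda} = \bigcap_{j=1}^r \m_{\z_j}$, which is visibly radical.

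For (ii) $\Rightarrow$ (i), I would first observe that $\dim_{\k} \A_{\Lambda} = \rk H_{\Lambda} = r$, so $I_{\Lambda}$ is zero-dimensional by Theorem \ref{thm:EquivCharactZeroDim}, and radicality together with the equality case of that theorem gives $\VV(I_{\Lambda}) = \{\z_1, \dots, \z_r\}$ with $r$ distinct points. Next I would verify $\Lambda \in I_{\Lambda}^{\perp}$: if $f \in I_{\Lambda}$ then $H_{\Lambda}(f) = 0$, so in particular $\Lambda(f) = (f \star \Lambda)(1) = H_{\Lambda}(f)(1) = 0$. Since $I_{\Lambda}$ is radical, the last part of Theorem \ref{Thm:Structure} applied to $\Lambda \in I_{\Lambda}^{\perp}$ provides constants $\lambda_1, \dots, \lambda_r \in \k$ with
\begin{equation*}
\Lambda = \sum_{i=1}^r \lambda_i \1_{\z_i}.
\end{equation*}

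It remains to argue that every $\lambda_i$ is nonzero. By the computation in the first paragraph, applied to this representation of $\Lambda$, we have $H_{\Lambda}(f) = \sum_{i=1}^r \lambda_i f(\z_i) \1_{\z_i}$, so the image of $H_{\Lambda}$ is contained in the span of $\{\1_{\z_i} : \lambda_i \neq 0\}$, whose dimension is at most the number of nonzero $\lambda_i$. Since this rank equals $r$ by hypothesis, all $\lambda_i$ must be nonzero, completing the equivalence. The only mildly delicate point is the verification $\Lambda \in I_{\Lambda}^{\perp}$; everything else is routine once Theorems \ref{thm:EquivCharactZeroDim} and \ref{Thm:Structure} are in hand.
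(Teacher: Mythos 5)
Your proof is correct. The paper itself does not prove this statement and instead cites Theorem~5.9 of \cite{SymTensorDec}; your self-contained argument cleanly derives both implications from the tools already stated in the paper (the explicit formula $H_\Lambda(f)=\sum_i \lambda_i f(\z_i)\1_{\z_i}$ together with Lagrange interpolation at distinct points for $(i)\Rightarrow(ii)$, and Theorem~\ref{thm:EquivCharactZeroDim}, the observation $\Lambda\in I_\Lambda^\perp$, and the radical case of Theorem~\ref{Thm:Structure} for $(ii)\Rightarrow(i)$), including the often-overlooked check that every $\lambda_i$ is nonzero.
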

\proof See Theorem 5.9 in \cite{SymTensorDec}.
\endproof

Thus, our intention is to come up with $\Lambda \in R^*$ which extends $f^*$ and has the minimal $r = \rk H_{\Lambda}$ for which $I_{\Lambda}$ is radical.
By considering the monomial basis $\{\x^{\a}\}_{\a \in \N^n}$ on $R$ and its dual basis on $R^*$, namely $\{\frac{1}{\a!}\d_\0^{\a}\}_{\a \in \N^n}$, the matrix of the Hankel operator $H_{\Lambda}$ is $\H_{\Lambda} = \big(\Lambda(\x^{\a+\b})\big)_{\a, \b \in \N^n}$.
Since we want it to agree with $f^*$ on $R_{\leq d}$, we consider the \emph{generalized Hankel matrix} $\H_{\Lambda}(\mathbf{h})$ defined by using variables $\{\mathbf{h}_{\a}\}_{\substack{\a \in N^n\\ \hspace{-0.15cm}|\a| > d}}$ where $f^*$ is not defined:
\begin{equation*}
\big(\H_{\Lambda}(\mathbf{h})\big)_{\a, \b \in \N^n} = \begin{cases}
f^*(\x^{\a+\b}) & \text{ if } |\a + \b| \leq d,\\
\mathbf{h}_{\a+\b} & \text{ if } |\a + \b| > d.
\end{cases}
\end{equation*}

Given a finite monomial set $B \se R$ we denote by $H_{\Lambda}^B$ the restriction of $H_{\Lambda}$  $H_{\Lambda}^B: \langle B \rangle_{\k} \to \langle B \rangle_{\k}^*$ and by $\H_{\Lambda}^B$ the matrix of this map with respect to $B$ and its dual basis.
In this setting a direct check shows that if $B = \{b_1, \dots, b_r\}$ then $\H_{\Lambda}^B = \big(\Lambda(b_i b_j)\big)_{1 \leq i, j \leq r}$.

\begin{prop}\label{Bbasis} Let $\Lambda \in R^*$ and $B = \{b_1, \dots, b_r\} \se \A_{\Lambda}$. Then $\rk H_{\Lambda} = r$ and $\H_{\Lambda}^B$ is invertible if and only if $B$ is a $\k$-basis of $\A_{\Lambda}$.
\end{prop}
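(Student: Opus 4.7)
The plan is to show both implications by reducing the invertibility of $\H_\Lambda^B$ to the question of whether the images of $b_1,\dots,b_r$ in $\A_\Lambda$ are $\k$-linearly independent, and to combine this with the fundamental identity $\rk H_\Lambda = \dim_\k \A_\Lambda$ (which holds by definition of $I_\Lambda$ and $\A_\Lambda$).

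First I would handle the forward direction. Assume $\rk H_\Lambda = r$ and $\H_\Lambda^B$ is invertible. Since $\dim_\k \A_\Lambda = \rk H_\Lambda = r = |B|$, it is enough to prove that the images of the $b_i$ in $\A_\Lambda$ are $\k$-linearly independent. Suppose $\sum_i c_i b_i \in I_\Lambda$; then $\big(\sum_i c_i b_i\big) \star \Lambda = 0$ in $R^*$, which applied to each $b_j$ gives $\sum_i c_i \Lambda(b_i b_j) = 0$ for every $j$. This is precisely the statement that the vector $(c_1,\dots,c_r)^T$ lies in the kernel of $\H_\Lambda^B$, so all $c_i$ vanish by invertibility. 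Hence $B$ descends to a $\k$-basis of $\A_\Lambda$.

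For the reverse direction, assume $B$ is a $\k$-basis of $\A_\Lambda$. Then $\rk H_\Lambda = \dim_\k \A_\Lambda = r$, so only invertibility of $\H_\Lambda^B$ remains. Take $(c_1,\dots,c_r)$ in the kernel of $\H_\Lambda^B$, i.e. $\Lambda\big(b_j \sum_i c_i b_i\big) = 0$ for every $j$. For an arbitrary $g \in R$, use that $B$ represents a basis of $R/I_\Lambda$ to write $g = \sum_k d_k b_k + i$ with $d_k \in \k$ and $i \in I_\Lambda$. Linearity yields $\Lambda\big(g \sum_i c_i b_i\big) = \sum_k d_k \Lambda\big(b_k \sum_i c_i b_i\big) + \Lambda\big(i \sum_i c_i b_i\big)$; the first sum vanishes by hypothesis, and the second vanishes because $i \in I_\Lambda$ means $i \star \Lambda = 0$, so $\Lambda(i h) = 0$ for every $h \in R$. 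Therefore $\sum_i c_i b_i \in I_\Lambda$, and the $\k$-linear independence of $B$ modulo $I_\Lambda$ forces $c_i = 0$ for all $i$.

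The only nontrivial ingredient is the decomposition step in the reverse direction, where I need to promote the vanishing of $\Lambda$ tested against the finite set $B$ to vanishing against every $g \in R$; this is precisely where the assumption that $B$ is a basis of $\A_\Lambda$ (not merely a linearly independent set) is consumed, together with the ideal property of $I_\Lambda$. Everything else is a direct bookkeeping of definitions.
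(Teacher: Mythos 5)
Your proof is correct and follows essentially the same route as the paper: in both directions you reduce to the identity $\rk H_\Lambda = \dim_\k \A_\Lambda$, then match the kernel of $\H_\Lambda^B$ with linear relations among the $b_i$ modulo $I_\Lambda$, with the reverse direction promoting the vanishing on $B$ to vanishing on all of $R$ by expanding an arbitrary element in the basis plus $I_\Lambda$. The only cosmetic difference is that the paper phrases both directions as proofs by contradiction, whereas you directly show the relevant kernels are trivial.
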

\proof Let $\H_{\Lambda}^B$ be invertible and $\dim_{\k} \A_{\Lambda} = \rk H_{\Lambda} = r$. It is sufficient to show that $b_1, \dots, b_r$ are linearly independent. By contradiction, assume that for some not all zeros constants $k_i \in \k$ we have $\sum_{i = 1}^r k_i b_i \in I_{\Lambda}$. Then the same non-trivial combination between the columns of $\H_{\Lambda}^B$ gives
\begin{equation*}
\sum_{i = 1}^r k_i\left(\begin{array}{c}
\Lambda(b_1b_i)\\
\vdots\\
\Lambda(b_rb_i)
\end{array}\right) = \left(\begin{array}{c}
\Lambda(b_1 \sum_{i = 1}^r k_ib_i )\\
\vdots\\
\Lambda(b_r \sum_{i = 1}^r k_ib_i )
\end{array}\right) = \mathbf{0},
\end{equation*}
contradicting the invertibility of $\H_{\Lambda}^B$.

Let now $B$ be a $\k$-basis of $\A_{\Lambda}$.
Again $r = \dim_{\k} \A_{\Lambda} = \rk H_{\Lambda}$ so we just need to show that the columns of $\H_{\Lambda}^B$ are linearly independent.
Assume by contradiction that we have a non-trivial vanishing combination of the columns with coefficients $\{k_1, \dots, k_r\} \se \k$.
As above, this implies that
\begin{equation*}
\forall j \in \{1, \dots, r\} \ : \ \Lambda(b_j \sum_{i = 1}^r k_ib_i ) = 0.
\end{equation*}
Since $B$ is a $\k$-basis of $\A_{\Lambda}$ then for every $f \in R$ there are $u_i \in \k$ and $\iota \in I_{\Lambda}$ such that $f = u_1 b_1 + \dots + u_r b_r + \iota$. Therefore
\begin{align*}
 \Big( \sum_{i = 1}^r k_ib_i \Big) \star \Lambda (f) &= \Lambda\Big( f \sum_{i = 1}^r k_ib_i \Big) = \Lambda\Big((u_1 b_1 + \dots + u_r b_r)\sum_{i = 1}^r k_ib_i\Big) \\
 &= u_1 \Lambda\Big(b_1\sum_{i = 1}^r k_ib_i\Big) + \dots + u_r \Lambda\Big(b_r\sum_{i = 1}^r k_ib_i\Big) = 0.
\end{align*}
Hence, we conclude that $\sum_{i = 1}^r k_i b_i \in I_{\Lambda}$, contradiction.
\endproof

\begin{lemma}\label{MultiplicationHankel} Let $\Lambda \in R^*$ such that $\rk H_{\Lambda} < \infty$ and $B$ be a set of generators of $\A_{\Lambda}$ as $\k$-vector space.
Then for every $a \in A_{\Lambda}$ we have
\begin{equation*}
H_{a \star \Lambda}^B = M_a^t \circ H_{\Lambda}^B \in \Hom_R(\A_{\Lambda}, \A_{\Lambda}^*).
\end{equation*}
\end{lemma}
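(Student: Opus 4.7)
The plan is to prove the equality by evaluating both sides on an arbitrary $f \in \A_\Lambda$ at an arbitrary test element $g \in R$, and showing that each reduces to $\Lambda(afg)$ by unfolding the definition of $\star$.

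First, I would check that both $H_{a \star \Lambda}$ and $M_a^t \circ H_\Lambda$ legitimately define elements of $\Hom_R(\A_\Lambda, \A_\Lambda^*)$. Since $I_\Lambda = \ker H_\Lambda$, for any $\iota \in I_\Lambda$ and $h \in R$ one has $\Lambda(\iota h) = 0$, so $\iota \star \Lambda = 0$ and simultaneously every $f \star \Lambda$ annihilates $I_\Lambda$. Hence $H_\Lambda$ factors through the quotient $R \to \A_\Lambda$ and lands in $I_\Lambda^\perp \simeq \A_\Lambda^*$; the same argument applies to $H_{a \star \Lambda}$, and $a \star \Lambda \in R^*$ itself does not depend on the chosen lift of $a$. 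The operator $M_a^t$ is well-defined on $\A_\Lambda^*$ because $M_a$ is $\k$-linear on $\A_\Lambda$ (using that $I_\Lambda$ is an ideal).

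Next I would carry out the key computation. For a lift $\tilde f \in R$ of $f \in \A_\Lambda$ and for any $g \in R$,
\begin{equation*}
H_{a \star \Lambda}(\tilde f)(g) \;=\; \bigl(\tilde f \star (a \star \Lambda)\bigr)(g) \;=\; (a \star \Lambda)(\tilde f g) \;=\; \Lambda(a \tilde f g),
\end{equation*}
while
\begin{equation*}
(M_a^t \circ H_\Lambda)(\tilde f)(g) \;=\; M_a^t(\tilde f \star \Lambda)(g) \;=\; \bigl(a \star (\tilde f \star \Lambda)\bigr)(g) \;=\; (\tilde f \star \Lambda)(a g) \;=\; \Lambda(\tilde f a g).
\end{equation*}
Commutativity of $R$ makes the two right-hand sides equal, so the two functionals agree in $R^*$ and therefore in $\A_\Lambda^*$. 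Since $B$ generates $\A_\Lambda$ as a $\k$-vector space, two $R$-module maps out of $\A_\Lambda$ that agree on the classes of elements of $B$ agree everywhere; in particular, their restrictions $H_{a \star \Lambda}^B$ and $M_a^t \circ H_\Lambda^B$ coincide.

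There is no real obstacle: the statement is a formal consequence of the associativity of the $R$-action $\star$ on $R^*$. The only mildly delicate point is to verify that all the maps involved genuinely descend to the quotient $\A_\Lambda$ and that the choice of lifts does not matter, which is immediate once one records that $\iota \star \Lambda = 0$ for every $\iota \in I_\Lambda$.
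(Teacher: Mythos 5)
Your proof is correct and follows essentially the same route as the paper: the paper writes the one-line chain $H_{a \star \Lambda}^B(f) = f \star a \star \Lambda = a \star f \star \Lambda = M_a^t \circ H_{\Lambda}^B(f)$, and your computation is exactly this chain unfolded one more level by evaluating on a test element $g$ and invoking commutativity of $R$. The added checks of well-definedness on the quotient are a reasonable thing to record but are not part of the paper's argument.
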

\proof For every $f \in \A_{\Lambda}$ we have
\begin{equation*}
H_{a \star \Lambda}^B(f) = f \star a \star \Lambda = a \star f \star \Lambda = M_a^t \circ H_{\Lambda}^B(f).
\end{equation*}
Thus, $H_{a \star \Lambda}^B = M_a^t \circ H_{\Lambda}^B$ as morphisms of $R$-modules.
\endproof

By Proposition \ref{Bbasis} and Lemma \ref{MultiplicationHankel} if $B$ is a $\k$-basis of $\A_{\Lambda}$ we can construct the matrices $(\M_{x_i}^B)^t = \H_{x_i \star \Lambda}^B (\H_{\Lambda}^B)^{-1}$ of the multiplication-by-$x_i$ operators on $\A_{\Lambda}^*$. Since these are the matrices of $M_{x_i}^t$, they have to commute.

By Theorem \ref{Eigenvectors} the common eigenvectors of $(\M_{x_i}^B)^t$ are $\{\1_{\z}\}_{\z \in \VV(I_{\Lambda})}$.
If $I_{\Lambda}$ is radical (equivalently, by Corollary \ref{RadicalDistinctEigenvectors}, if $|B| = |\VV(I_{\Lambda})|$) then by Theorem \ref{PbReformulation} we have found a Waring decomposition of $f$. In fact, once we have the eigenvector corresponding to $\1_{\z}$ we read the coefficients of the affine linear form $1 + \z_1x_1 + \dots + \z_nx_n$ appearing in the decomposition of $f$ by evaluating $\1_{\z}(x_i)$.
Since in $\A_{\Lambda}^*$ we have been using the dual basis of $B$, this means that these coefficients may be read directly from the $x_i^*$-entry of the eigenvectors, when $x_i \in B$.

Thus, the problem of finding a Waring decomposition of $f$ may be solved by finding constants to plug in $\H_{\Lambda}(\mathbf{h})$ in order to have a basis $B$ satisfying all the previous conditions, with $|B|$ minimal.

\subsection{Choice of the basis}

In this section we show that among the possible bases $B$ there are special choices that we might consider in order to reduce the number of tests performed by the algorithm. 
First, we observe that our bases may always be composed of elements of degree bounded by $\deg F$, where $F$ is the polynomial under consideration.

\begin{prop} \label{Degd}
Let $F \in R^h_d$ and $\Lambda \in R^*$ be an extension of $f^* \in R_{\leq d}^*$.
Then there is a monomial basis $B$ of $\A_{\Lambda}$ such that every $[b] \in B$ admits a representative with $\deg b \leq d$.
\end{prop}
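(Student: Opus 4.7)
The plan is to reduce the statement to a surjectivity question. Setting $r = \rk H_\Lambda$, by Proposition \ref{Bbasis} we have $\dim_\k \A_\Lambda = r$, so it suffices to show that the restriction of the quotient map $\pi \colon R \twoheadrightarrow \A_\Lambda$ to the finite-dimensional subspace $R_{\leq d}$ is surjective. If so, the (finite) spanning set $\{[\x^\alpha]\}_{|\alpha|\leq d}$ of $\A_\Lambda$ contains an $r$-element basis $B$, and each $[b] \in B$ has by construction a representative of degree at most $d$.

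To establish this surjectivity I would use the hypothesis $\Lambda|_{R_{\leq d}} = f^*$ together with the running assumption of this section, namely that $\Lambda$ is a minimal-rank extension realising a Waring decomposition. Theorem \ref{PbReformulation} then provides a presentation
\[
\Lambda = \sum_{i=1}^{r} \lambda_i\, \1_{\z_i}
\]
with distinct $\z_i \in \k^n$ and non-zero $\lambda_i$, while Theorem \ref{Thm:Structure} combined with the radicality of $I_\Lambda$ identifies $\A_\Lambda$ with $\k^r$ via the evaluation isomorphism $[g] \mapsto (g(\z_1),\dots,g(\z_r))$. Through this identification the surjectivity of $\pi|_{R_{\leq d}}$ becomes the surjectivity of the evaluation map
\[
\epsilon \colon R_{\leq d} \longrightarrow \k^r, \qquad g \longmapsto (g(\z_1),\dots,g(\z_r)).
\]

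I would prove that $\epsilon$ is surjective by contradiction. A non-zero vector $(c_1,\dots,c_r) \in \k^r$ annihilating the image of $\epsilon$ gives $\sum_{i=1}^{r} c_i\, \1_{\z_i}\big|_{R_{\leq d}} = 0$. Combined with $f^* = \sum_{i=1}^{r} \lambda_i\, \1_{\z_i}\big|_{R_{\leq d}}$, this yields, for every $t \in \k$,
\[
f^* = \sum_{i=1}^{r} (\lambda_i + t\, c_i)\, \1_{\z_i}\big|_{R_{\leq d}}.
\]
Choosing $t = -\lambda_{i_0}/c_{i_0}$ for some index $i_0$ with $c_{i_0}\neq 0$ annihilates the $i_0$-th summand, so $f^*$ is expressed as a sum of at most $r-1$ point evaluations; extending this shorter sum to all of $R^*$ produces an extension of $f^*$ of Hankel rank strictly smaller than $r$, contradicting the minimality built into $r$.

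I expect this minimality step to be the main subtlety: for a truly arbitrary extension $\Lambda$ the algebra $\A_\Lambda$ can genuinely require higher-degree monomials in any of its monomial bases, and it is precisely the ``no shorter extension exists'' argument that rules this out. Once $\epsilon$ is known to be surjective, extracting $r$ linearly independent classes from $\{[\x^\alpha]\}_{|\alpha|\leq d}$ is routine.
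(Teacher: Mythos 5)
Your proof is correct, and it rests on the same underlying fact as the paper's, but it packages it differently, so a brief comparison is in order. The paper proceeds constructively: from a minimal Waring decomposition $f=\sum_{i=1}^r\lambda_i l_i^d$ it observes that the $l_i^d$ are linearly independent in $R_{\leq d}$, hence the matrix $(\z_i^\a)_{1\leq i\leq r,\,|\a|\leq d}$ has rank $r$; it then picks exponents $\a_j$ giving an invertible $r\times r$ minor and checks that $B=\{[\x^{\a_j}]\}$ is independent in $\A_{\Lambda}$ by evaluating any relation at the points of $\VV(I_\Lambda)$. You recast the same rank condition as surjectivity of $\epsilon\colon R_{\leq d}\to\k^r$ (via the evaluation isomorphism $\A_\Lambda\cong\k^r$) and establish it by contradiction: a covector $(c_i)$ annihilating the image lets you deform $\lambda_i\mapsto\lambda_i+t c_i$, zero out one coefficient, and contradict the minimality of $r$. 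The two arguments are contrapositives of one another. Your version has the virtue of supplying the justification (via the shortening argument) for the linear-independence step that the paper merely asserts; the paper's version has the virtue of actually exhibiting the monomial basis. Both proofs silently restrict, as you correctly point out, to the $\Lambda$ realising a minimal Waring decomposition; the proposition, read literally for an arbitrary extension of finite Hankel rank, is false (take $I_\Lambda$ a high power of a maximal ideal).
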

\begin{proof}
Let $f = \sum_{i = 1}^r \lambda_i l_i^d$ a rank decomposition of $f$ and $\z_i \in \k^n$ be the points corresponding to the linear forms $l_i$.
For every $i$ we have $l_i^d = \sum_{|\a| \leq d} {d \choose \a} \x^{\a} \z_i^{\a}$ and $\{ {d \choose \a} \x^{\a} \}_{|\a| \leq d}$ is a $\k$-basis of $R_{\leq d}$. Since $\{l_i^d\}_{i \in \{1,...,r\}}$ are $\k$-linearly independent then the matrix
\begin{equation*}
    ( \z_i^{\a} )_{ 1 \leq i \leq r, |\a| \leq d }
\end{equation*}
has rank $r$. Hence, we may consider $\{\a_j\}_{j \in \{1, \dots, r\}}$ defining a principal $r \times r$ minor
\begin{equation*}
    M = ( \z_i^{\a_j} )_{ 1 \leq i,j \leq r }.
\end{equation*}
We prove that $B = \{[x^{\a_j}]\}_{j \in \{1, \dots, r\}}$ is a $\k$-basis of $\A_{\Lambda}$, from which the statement follows.
Since $|B| = r = \dim_{\k} \A_{\Lambda}$ (by Theorem \ref{PbReformulation}), it is sufficient to prove that elements of $B$ are $\k$-linearly independent in $\A_{\Lambda}$. If $\sum_{i = 1}^r \lambda_i x^{\a_i} \in I_{\Lambda}$ for some $\{\lambda_i\}_{i \in \{1, \dots, r\}} \se \k$ then, since $\VV( I_{\Lambda} ) = \{\z_1, \dots, \z_r\}$, we have
\begin{equation*}
    \begin{cases}
    \1_{\z_1}( \sum_{i = 1}^r \lambda_i x^{\a_i} ) &= 0, \\
    &\vdots \\
    \1_{\z_r}( \sum_{i = 1}^r \lambda_i x^{\a_i} ) &= 0,
    \end{cases} \implies
    \left( \begin{array}{ccc}
    \z_1^{\a_1} & \dots & \z_1^{\a_r} \\
    \vdots &   & \vdots \\
    \z_r^{\a_1} & \dots & \z_r^{\a_r} \end{array}\right)
    \left( \begin{array}{c}
    \lambda_1 \\
    \vdots \\
    \lambda_r \end{array}\right) = \0.
\end{equation*}
Therefore, since $M$ is invertible,  $\lambda_1 = \lambda_2 = \dots = \lambda_r = 0$.
\end{proof}

We would also like to search for a $B$ that allows us to read all the coefficients we need to reconstruct the linear forms involved in the decomposition of $f$.
We prove that it is always possible if we use \emph{essential variables}.

\begin{defn} Let $F \in R^h_d$. We define the \emph{essential number of variables} $N_{\text{ess}}(F)$ of $F$ as the minimal $m \in \N$ for which there are $\{y_1, \dots, y_m\} \se R^h_{\leq 1}$ such that $F \in \k[y_1, \dots, y_m]$.
Every such a minimal set $\{y_1, \dots, y_m\}$ is called a set of \emph{essential variables} of $F$.
\end{defn}

In the literature the essential number of variables of $F$ is also known as its \emph{concise dimension} (eg. cf. \cite{land}).

\begin{defn}\label{first:catalecticant} Let $F \in R^h_d$ and let $\{M_1, \dots, M_N\}$ be the standard monomial $\k$-basis of $R^h_{d-1}$. Thus, for every $i \in \{0, 1, \dots, n\}$ there are uniquely determined constants $\{c_{ij}\}_{j \in \{1, \dots, N\}} \se \k$ such that
\begin{equation*}
\de_i(F) = c_{i1} M_1 + \dots + c_{iN} M_N.
\end{equation*}
We define the \emph{first catalecticant matrix} $\C_F \in M_{n+1, N}(\k)$ of $F$ as
\begin{equation*}
(\C_F)_{ij} = c_{ij}.
\end{equation*}
\end{defn}

The following proposition is probably classically known, but we refer to \cite[Proposition 1]{ReducingVariables} for a proof of it.

\begin{prop}\label{EssVar} Let $F \in R^h_d$. Then
\begin{equation*}
N_{ess}(F) = \rk(\C_F).
\end{equation*}
Besides, any basis of the $\k$-vector space $\langle D(\d)(F) \ | \ D \in R^h_{d-1} \rangle_{\k}$ is a set of essential variables of $F$.
\end{prop}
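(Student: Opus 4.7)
\medskip

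\noindent\textbf{Proof proposal.} My plan is to split the statement into two independent equalities: first showing $N_{\text{ess}}(F) = \dim_{\k} V$ where $V := \langle D(\d)(F) \mid D \in R^h_{d-1}\rangle_{\k} \se R^h_1$, and then matching $\dim_{\k} V$ with $\rk(\C_F)$ by the apolarity pairing.

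For the inequality $\dim_{\k} V \leq N_{\text{ess}}(F)$, take a set of essential variables $\{y_1, \dots, y_m\}$ and complete it to a basis of $R^h_1$, which we may assume to be $\{x_0, \dots, x_n\}$ after a linear change of coordinates. Then $F \in \k[x_0, \dots, x_{m-1}]$, so every differential operator $D(\d)$ with $D \in R^h_{d-1}$ sends $F$ into $\k[x_0, \dots, x_{m-1}]_1$, giving $V \se \langle x_0, \dots, x_{m-1}\rangle_{\k}$ and hence the desired bound. For the opposite inequality, take a basis $\{y_1, \dots, y_k\}$ of $V$ and complete it to a basis $\{y_1, \dots, y_{n+1}\}$ of $R^h_1$. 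Write $F$ in the $y$-monomial basis and suppose by contradiction that some monomial involving $y_j$ with $j > k$ appears with a non-zero coefficient $d_{\beta_0}$. Setting $\alpha_0 = \beta_0 - e_j$, the $(d-1)$-th order $y$-partial $\partial_y^{\alpha_0}(F)$ has a non-zero $y_j$-coefficient (contributed by $d_{\beta_0}$); since $y$-derivatives are $\k$-linear combinations of $x$-derivatives, this element lies in $V$, contradicting $V = \langle y_1, \dots, y_k\rangle_{\k}$. Hence $F \in \k[y_1, \dots, y_k]$ and $N_{\text{ess}}(F) \leq k = \dim_{\k} V$. The same argument proves the second claim, because it shows that \emph{any} basis of $V$ already generates $F$, so must be a minimal one by the equality just established.

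It remains to identify $\dim_{\k} V$ with $\rk(\C_F)$. The matrix $\C_F$ represents the map $\varphi_1 : R^h_1 \to R^h_{d-1}$, $L \mapsto L(\d)(F)$, in the monomial basis (its rows are the $\partial_i F$), while $V$ is the image of the map $\varphi_{d-1} : R^h_{d-1} \to R^h_1$, $D \mapsto D(\d)(F)$. The apolar pairing $R^h_1 \times R^h_{d-1} \to \k$, $(L, D) \mapsto (LD)(\d)(F)$, is well-defined because $LD \in R^h_d$ acts on $F$ to produce a constant, and it is symmetric in $L$ and $D$; by definition it is obtained by composing $\varphi_1$ (resp.\ $\varphi_{d-1}$) with the apolar pairing between $R^h_{d-1}$ and itself (resp.\ $R^h_1$ and itself). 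Consequently, once a monomial basis and its apolar dual are fixed, the matrices of $\varphi_1$ and $\varphi_{d-1}$ are transposes of each other up to rescaling by fixed non-zero multinomial coefficients. Therefore $\rk \varphi_{d-1} = \rk \varphi_1 = \rk \C_F$, and by the first part $\rk \varphi_{d-1} = \dim_{\k} V = N_{\text{ess}}(F)$.

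The main obstacle I anticipate is the contradiction step in the second inequality: one must ensure that the unwanted $y_j$-contribution survives the differentiation without being cancelled by other monomials, which is why I work with the very specific choice $\alpha_0 = \beta_0 - e_j$, so that only monomials $y^{\beta_0 - e_j + e_i}$ contribute and the coefficient of $y_j$ in $\partial_y^{\alpha_0}(F)$ is directly proportional to $d_{\beta_0} \neq 0$. Everything else is a clean bookkeeping argument.
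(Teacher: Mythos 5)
The paper does not actually prove this proposition; it only cites \cite[Proposition 1]{ReducingVariables} for it, so there is no in-paper argument to compare against. Your self-contained proof is correct. Both inequalities in $N_{\text{ess}}(F)=\dim_{\k}V$ are sound: for $\leq$, any $D(\d)$ with $D\in R^h_{d-1}$ kills the unused variables of a set of essential ones, so $V\se\langle x_0,\dots,x_{m-1}\rangle_{\k}$; for $\geq$, the targeted choice $\a_0=\b_0-e_j$ makes the $y_j$-coefficient of $\de_y^{\a_0}F$ a nonzero multiple of $d_{\b_0}$, and since $\de_y^{\a_0}$ is a $\k$-combination of order-$(d-1)$ $x$-derivatives that element lies in $V$, contradicting $V=\langle y_1,\dots,y_k\rangle_{\k}$. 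Your identification $\dim_{\k}V=\rk\C_F$ via the symmetric pairing $(L,D)\mapsto(LD)(\d)(F)$ is the standard catalecticant symmetry: the two induced maps $R^h_1\to(R^h_{d-1})^*$ and $R^h_{d-1}\to(R^h_1)^*$ have equal rank, and by non-degeneracy of the apolar pairings these ranks are $\rk\varphi_1=\rk\C_F$ and $\rk\varphi_{d-1}=\dim_{\k}V$ respectively. (Concretely, in monomial bases the matrix of $\varphi_1$ is $\C_F^T$ while that of $\varphi_{d-1}$ is $\C_F$ post-multiplied by an invertible diagonal matrix of multi-index factorials, so your ``transpose up to rescaling'' remark is accurate.) In short, you have supplied a complete proof where the paper relies on an external citation, and the second assertion --- that any basis of $V$ is a set of essential variables --- falls out directly from your argument rather than needing a separate step.
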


\begin{defn} Let $B \se R$ be a set of monomials. We say that $B$ is a \emph{staircase} if for every $\a \in \N^n$ and every $i \in \{1, \dots, n\}$ then $\x^{\a}x_i \in B$ implies $\x^{\a} \in B$.
Moreover, if $B$ also contains all the degree one monomials then we say it is a \emph{complete staircase}.
\end{defn}

\begin{thm} \label{CompleteStaircase}
Let $F \in R^h_d$ such that $\{x_0, x_1, \dots, x_n\}$ is a set of essential variables of $F$ and let $\Lambda \in R^*$ be an extension of $f^* \in R_{\leq d}^*$.
Then there is a monomial basis $B$ of $\A_{\Lambda}$ such that $B$ is a complete staircase with elements of degree at most $d$.
\end{thm}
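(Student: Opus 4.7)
My plan is to prove the theorem in two main steps. First, I will use the essential-variables hypothesis to show that $\{1, x_1, \dots, x_n\}$ are $\k$-linearly independent in $\A_\Lambda$. Second, combining this with Proposition \ref{Degd}, I will extract a complete staircase basis via a graded monomial order on $R$.

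For the first step, suppose by contradiction that $l = a_0 + a_1 x_1 + \dots + a_n x_n \in I_\Lambda$ with $(a_0, \dots, a_n) \neq \0$. Then $lg \in R_{\leq d}$ for every $g \in R_{\leq d-1}$, so that $\Lambda(lg) = f^*(lg) = \langle f, lg \rangle = 0$. Homogenizing $l$ to $L = a_0 x_0 + a_1 x_1 + \dots + a_n x_n \in R^h_1$ and $g$ to $G \in R^h_{d-1}$, a direct computation identifies the affine pairing $\langle f, lg \rangle$ with the degree-$d$ homogeneous apolar pairing $\langle F, LG \rangle_h$, which in turn equals $\tfrac{1}{d}\langle L(\d)F, G \rangle_h$ in degree $d-1$. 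Since this vanishes for all $G \in R^h_{d-1}$, non-degeneracy of the apolar pairing on $R^h_{d-1}$ forces $L(\d)F = a_0 \de_0 F + \dots + a_n \de_n F = 0$. By Proposition \ref{EssVar} the essential-variables hypothesis is equivalent to $\rk \C_F = n+1$, so $\{\de_0 F, \dots, \de_n F\}$ are $\k$-linearly independent and all the $a_i$ vanish, a contradiction.

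For the second step, fix any graded monomial order on $R$ and let $B$ be the set of standard monomials of $I_\Lambda$, i.e.\ the monomials not in $\tn{LT}(I_\Lambda)$. Standard Gr\"obner theory ensures that $B$ is a $\k$-basis of $\A_\Lambda$, and since the complement of a monomial ideal is downward-closed under divisibility, $B$ is automatically a staircase. If some $x_i$ were not standard, then $x_i = \tn{LT}(g)$ for some $g \in I_\Lambda$; gradedness of the order forces every remaining monomial of $g$ to be a constant, so $g$ itself is a non-zero linear form in $I_\Lambda$, contradicting Step~1. Similarly $1 \in B$ since $\A_\Lambda \neq 0$, so $B$ is a complete staircase. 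Finally, Proposition \ref{Degd} yields a surjection $R_{\leq d} \twoheadrightarrow \A_\Lambda$, and a graded order reduces every monomial of degree $k$ to a $\k$-combination of standard monomials of degree $\leq k$; hence the standard monomials of degree $\leq d$ already span $\A_\Lambda$, and being a linearly independent subset of the basis $B$ they must coincide with $B$.

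The main obstacle is Step~1: converting the Hankel-theoretic vanishing $l \in I_\Lambda$ into the polynomial identity $L(\d)F = 0$ requires carefully identifying the affine apolar pairing on $R_{\leq d}$ with the homogeneous one on $R^h_d$ and then invoking its non-degeneracy. Once the linear independence of $\{1, x_1, \dots, x_n\}$ in $\A_\Lambda$ is secured, the construction of $B$ and the degree bound are routine Gr\"obner-theoretic consequences of Proposition \ref{Degd}.
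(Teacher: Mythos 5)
Your proof is correct and follows essentially the same route as the paper: both reduce completeness of the staircase to showing that no nonzero affine linear form lies in $I_\Lambda$ (via the essential-variables hypothesis and Proposition~\ref{EssVar}), and both obtain the staircase and degree bound from the standard monomials of a graded Gr\"obner basis together with Proposition~\ref{Degd}. One small imprecision: gradedness only forces the trailing monomials of $g$ to have degree $\le 1$, not to be constants, but this does not affect the conclusion that $g$ is a nonzero affine linear form contradicting Step~1.
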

\proof Let us consider a basis $B$ of $\A_{\Lambda}$ made of representatives of degree not greater than $d$, as in Proposition \ref{Degd}.
Let $G$ be a Gr\"obner basis of $I_{\Lambda}$ with respect to a graded order on $R$.
By \cite[Chapter 5, Section 3, Proposition 4]{IdealVarieties} reducing $B$ with respect to $G$ we obtain a staircase basis of $\A_{\Lambda}$. Since the considered order is graded, elements of $B$ still have degree bounded by $d$.

We now check that such a staircase may also be chosen complete. Let us assume by contradiction that a variable $x_j$ never occurs in the representatives of $G$, then by \cite[Chapter 5, Section 3, Proposition 1]{IdealVarieties} its reminder $\overline{x_j}^G$ obtained dividing by $G$ is a $\k$-linear combination of monomials in $B$, therefore since the order is graded we have a linear relation
\begin{equation*}
   l = \lambda_0 + \lambda_1 x_1 + \dots + \lambda_n x_n \in I_{\Lambda}.
\end{equation*}
We define $D = \lambda_0 \de_0 + \lambda_1 \de_1 + \dots + \lambda_n \de_n$ and prove that $D(F) = 0$.
In fact, a straightforward calculation shows that the coefficient of $\x^{(\a_0, \a_1, \dots, \a_n)}$ in $D(F)$ is equal to $f^*\big(l \cdot \x^{(\a_1, \dots, \a_n)}\big)$. However, this quantity is always zero because
\begin{equation*}
f^*\big(l \cdot \x^{(\a_1, \dots, \a_n)}\big) = l \star f^*\big( \x^{(\a_1, \dots, \a_n)}\big) = H_{\Lambda}(l)\big(\x^{(\a_1, \dots, \a_n)}\big) = 0.
\end{equation*}

By Proposition \ref{EssVar}, $D(F) = 0$ implies that there is a non-trivial vanishing combination between the lines of $\C_F$, contradicting $N_{ess}(F) = n+1$.
\endproof

Thus, by Theorem \ref{CompleteStaircase} we can limit our research to bases $B$ in the set
\begin{equation*}
\B_d = \{B \se R_{\leq d} \ | \ B \text{ is a complete staircase}\}.
\end{equation*}

\subsection{Minimal Waring rank to test}\label{Sec:minr}

In this section we determine the first $r$ to test in order to find a Waring decomposition.
We define the ${ n + \lceil \frac{d}{2} \rceil \choose n } \times { n + \lfloor \frac{d}{2} \rfloor \choose n }$ matrix
\begin{equation*}
\rH_{f^*} = \big( f^*(\x^{\a + \b}) \big)_{\substack{|\a| \leq \lceil d/2 \rceil\\ |\b| \leq \lfloor d/2 \rfloor}}.
\end{equation*}
For every $\Lambda \in R^*$ extending $f^* \in R_{\leq d}^*$, the matrix $\rH_{f^*}$ is the largest numerical submatrix of $\H_{\Lambda}(\mathbf{h})$, namely the largest submatrix not involving any variables $h_{\a}$.

The following proposition is actually \cite[Section 5.4]{IK}.

\begin{prop} \label{LowerRankBound}If $F \in R^h_d$ has  Waring rank $r$, then $\rk \rH_{f^*} \leq r$.
\end{prop}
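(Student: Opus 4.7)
The plan is to exhibit a rank-$r$ factorization of $\rH_{f^*}$. Once $F$ has Waring rank $r$ we can, after a generic change of coordinates (as discussed at the beginning of Section \ref{algo:section}), dehomogenize with respect to $x_0$ and obtain a decomposition $f = \sum_{i=1}^r \lambda_i l_i^d$ with $l_i = 1 + z_{i,1}x_1 + \dots + z_{i,n}x_n \in R_{\leq 1}$ and corresponding points $\z_i = (z_{i,1},\dots,z_{i,n}) \in \k^n$. Applying $\tau$ and Proposition \ref{Prop:DualLinearAffine} gives
\begin{equation*}
    f^* = \sum_{i=1}^r \lambda_i \1_{\z_i} \in R_{\leq d}^*.
\end{equation*}

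Next I would compute each entry of $\rH_{f^*}$ directly from this expression: for multi-indices $\a, \b$ with $|\a| \leq \lceil d/2 \rceil$ and $|\b| \leq \lfloor d/2 \rfloor$,
\begin{equation*}
    f^*(\x^{\a+\b}) = \sum_{i=1}^r \lambda_i \1_{\z_i}(\x^{\a+\b}) = \sum_{i=1}^r \lambda_i \z_i^{\a}\z_i^{\b}.
\end{equation*}
This exhibits $\rH_{f^*}$ as the matrix product
\begin{equation*}
    \rH_{f^*} = A \, D \, B,
\end{equation*}
where $A = (\z_i^{\a})_{|\a|\leq \lceil d/2 \rceil,\, 1 \leq i \leq r}$ has size $\binom{n+\lceil d/2 \rceil}{n} \times r$, $D = \mathrm{diag}(\lambda_1,\dots,\lambda_r)$, and $B = (\z_i^{\b})_{1 \leq i \leq r,\, |\b|\leq \lfloor d/2 \rfloor}$ has size $r \times \binom{n+\lfloor d/2 \rfloor}{n}$.

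Since the middle factor has rank at most $r$, the product has rank at most $r$, proving $\rk \rH_{f^*} \leq r$. There is no real obstacle here: the only subtlety is the dehomogenization step, which is harmless because a generic linear change of coordinates ensures $x_0$ does not divide $F$ (and the Waring rank is invariant under such changes of coordinates). Everything else is a one-line factorization argument.
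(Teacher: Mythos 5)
Your proof is correct, and it takes a genuinely different route from the paper's. The paper extends $f^*$ to $\Lambda = \sum_i \lambda_i \1_{l_i} \in R^*$, invokes Theorem \ref{PbReformulation} to conclude $\rk H_{\Lambda} = r$, and then notes that $\rH_{f^*}$ is a submatrix of $\H_{\Lambda}$. You bypass the extension and Theorem \ref{PbReformulation} entirely: since every entry of $\rH_{f^*}$ involves only monomials of degree at most $\lceil d/2\rceil + \lfloor d/2\rfloor = d$, the expression $f^* = \sum_i \lambda_i \1_{\z_i}$ in $R_{\leq d}^*$ (via Proposition \ref{Prop:DualLinearAffine}) already determines the matrix, and the identity $\1_{\z_i}(\x^{\a+\b}) = \z_i^{\a}\z_i^{\b}$ yields the explicit factorization $\rH_{f^*} = A\,D\,B$ with $D$ of size $r\times r$. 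What the paper's route buys is coherence with the Hankel-operator machinery it has already set up, and it emphasizes that $\rH_{f^*}$ sits inside the full (generalized) Hankel matrix whose rank equals the Waring rank at a minimal extension. What your route buys is self-containment and a slightly stronger conclusion: it shows $\rk\rH_{f^*}\leq r$ for \emph{any} power-sum expression of length $r$, minimal or not, without needing the nonvanishing of the $\lambda_i$ or distinctness of the $\z_i$ that Theorem \ref{PbReformulation} requires. You also handle the only real subtlety (making $x_0$ divide none of the $L_i$ so dehomogenization and normalization to constant term $1$ are legitimate) correctly via a generic change of coordinates.
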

\proof Let $\Lambda = \sum_{i = 1}^r \lambda_i \1_{l_i} \in R^*$ extending $f^* \in R_{\leq d}^*$ such that $r$ is minimal, i.e. $r$ is the Waring rank of $F$. By Theorem \ref{PbReformulation} we have $\rk \H_{\Lambda} = r$ and since $\rH_{f^*}$ is a submatrix of $\H_{\Lambda}$ then also $\rk \rH_{f^*} \leq \rk \H_{\Lambda}$. 
\endproof

By Proposition \ref{LowerRankBound} it is sufficient to test bases $B$ with $|B| \geq \rk \rH_{f^*}$.

\subsection{Waring decomposition algorithm}\label{SymTD:algorithm}
We are now ready to state our version of the algorithm for Waring rank and decomposition. 

We require the input polynomial $F \in R^h_d$ to be written with a general set of essential variables, i.e before starting the algorithm one has to perform a change of variables for $F$ by employing a general (numerically speaking: random) basis of the vector space given by Proposition \ref{EssVar}.
\\
\begin{mdframed}[]
\begin{alg}[Waring Decomposition]\label{alg:waring}\end{alg}
\vspace{0.1cm}
\noindent\textbf{Input:} A degree $d \geq 2$ polynomial $F \in R^h_d$ written by using a general set of essential variables. \\
    \textbf{Output:} A Waring  decomposition of $F$.
\begin{enumerate}
\item \label{Dehom&Hankel} Construct the matrix $\H_{\Lambda}(\mathbf{h})$ with parameters $\mathbf{h} = \{h_{\a}\}_{\substack{\a \in \N^n\\\hspace{-0.1cm}|\a| > d}}$.
\item \label{Startingr} Set $r := \rk \rH_{f^*}$.
\item \label{MainLoop} For $B \in \B_d$ and $|B| = r$ do
\begin{itemize}
\item Find parameters $\mathbf{h}$ such that:\\
- $\det \H_{\Lambda}^B \neq 0$.\\
- The operators $(\M_i^B)^t := \H^B_{x_i \star \Lambda}(\H^B_{\Lambda})^{-1}$ commute.\\
- There are $r$ distinct eigenvectors $v_1, \dots, v_r$ common to $(\M_i^B)^t$'s.
\item If one finds such parameters then go to step \ref{FinalStep}.
\end{itemize}
\item \label{Increaser} Set $r := r + 1$ and restart step \ref{MainLoop}.
\item \label{FinalStep} Solve the linear system $F = \sum_{i = 1}^r \lambda_i (v_{i1}x_0 + \dots + v_{i(n+1)}x_n)^d$ to find the $\{\lambda_i\}_{i \in \{1, \dots, r\}} \subseteq \k$ and return the obtained decomposition of $F$.
\end{enumerate}
\end{mdframed}
\vspace{0.5cm}
We thank B. Mourrain for having pointed us out the following.
\begin{rmk}\label{rmk:RandomCombination} If we choose $\{\gamma_1, \dots, \gamma_n\} \se \k$ randomly, the common eigenvectors of $\{(\M_i^B)^t\}_{i \in \{1, \ldots, n\}}$ are eigenvectors of $\sum_{i = 1}^n \gamma_i (\M_i^B)^t$, which are simple with probability $1$. Hence, the check on common eigenvectors requires only one eigenspace computation.
\end{rmk}

\section{Algorithm advantages}\label{algo:advantages}

In this section we give some examples of actual advantages of this version of the algorithm with respect to the one given in \cite{SymTensorDec}. Moreover, we also draw attention to the motivations behind some steps of the algorithm, exhibiting what could go wrong by ignoring them.

\subsection{Essential variables}\label{section:essential:variables}

The use of essential variables is actually \emph{essential} to fully reconstruct a Waring decomposition, as shown in Theorem \ref{CompleteStaircase}. As an example of what could go wrong by not making use of essential variables, we consider
\begin{equation*}
    F = (x+y+z)^3-x^3 \in \Cx[x,y,z].
\end{equation*}
It has Waring rank 2 but $\{x,y,z\}$ is not a set of essential variables of $F$, since $\{x, y+z\}$ is. In fact, there are no complete staircases $B$ with only 2 elements, since a complete staircase must contain at least $1, y$ and $z$. Thus, the algorithm will never come up with a rank-2 decomposition.

We also notice that the problem is not related to the choice of $B$: with such an algorithm any basis made of two elements can provide us with at most two coefficients of the linear forms in $\Cx[x,y,z]^h_1$, then by using only one $B$ it is not possible to recover all the coefficients of a Waring decomposition.

\subsection{The starting $r$}\label{Scelta:r}
By Proposition \ref{LowerRankBound} we do not miss good decompositions of the given polynomial starting the algorithm with $r=\rk\rH_{f^*}$. One might think that testing smaller $r$'s (as in \cite{SymTensorDec}) is just a waste of computational power, but there are also theoretical reasons to avoid these $r$'s.
In fact, the risk of start testing small ranks is to come up with decompositions of different tensors having many monomials in common with the one that we really want to decompose but a different (smaller) Waring rank. In the algorithmic practice, this means that the SymTD algorithm exits its main loop, reaches Step \ref{FinalStep} but cannot find any solution to the final linear system. The following example portraits precisely this issue.

\begin{exmp}
Let $F = x^4 + (x+y)^4 + (x-y)^4 = 3x^4 + 12x^2y^2 + 2y^4 \in \Cx[x,y]$. The principal $4 \times 4$ minor of the generalized Hankel matrix is
\begin{equation*}
\left( \begin{array}{cccc}
3  &  0  &  2  &  0 \\
0  &  2  &  0  &  2 \\
2  &  0  &  2  &  h_5 \\
0  &  2  &  h_5 & h_6 \end{array}\right).
\end{equation*}
Let us consider $r = 2$ instead of $r = 3$ as prescribed by the algorithm. The only possible basis $B = \{1,y\}$ leads to the following multiplication matrix
\begin{equation*}
(\M_y^B)^t = \left( \begin{array}{cc}
0  &  1  \\
\frac{2}{3} & 0 \end{array}\right).
\end{equation*}
It has two distinct eigenvectors, namely $(\pm \sqrt{3/2}, 1)$. Nevertheless, the system
\begin{equation*}
    F = \lambda_1 (\sqrt{3/2}x + y)^4 + \lambda_2 (-\sqrt{3/2}x + y)^4
\end{equation*}
has no solutions. However, if we ignored the linear condition imposed by the coefficients of $y^4$ the above system would have the solution $\lambda_1 = \lambda_2 = \frac{2}{3}$.
This choice of coefficients determines the polynomial $G = 3x^4 + 12x^2y^2 + \frac{4}{3}y^4$.
As expected, since we started from $r < \rk \rH_{f^*}$ we did not use all the information of $\rH_{f^*}$ and this has translated into a Waring decomposition of another polynomial, whose Hankel matrix has many (but not every) entries in common with the one of $F$.
\end{exmp}

\subsection{The requirements on B}\label{requirementB}

Here we discuss the choice of bases $B$ as complete staircases.
\begin{defn}
Let $B \se R$ be a set of monic monomials. We say that $B$ is \emph{connected to 1} if for every $m \in B$ either $m = 1$ or there exists $i \in \{1, \dots, n\}$ and $m' \in B$ such that $m = x_i m'$.
\end{defn}
In \cite{SymTensorDec} it is asserted that we need to check bases $B$ connected to 1. Clearly every complete staircase is also connected to 1, but the converse does not hold. Since by Theorem \ref{CompleteStaircase} we know that a Waring decomposition always arises from a basis which is a complete staircase, we have restricted the research to these particular bases.

With this requirement the number of bases to test for a given rank drops dramatically. As an example, the following table shows how many such bases are there in $\Cx[x,y,z]$ depending on their size.

\begin{table}[h!] \centering
\begin{tabular}{|l|l|l|}
\hline
Size & \# Complete staircases   & \# Connected to 1 \cr  \hline
3    & 1                        & 5              \cr  \hline
4    & 3                        & 13             \cr  \hline
5    & 5                        & 35             \cr  \hline
6    & 9                        & 96             \cr  \hline
7    & 13                       & 267            \cr  \hline
\end{tabular}
\end{table}


Moreover, the average degree of monomials inside a complete staircase is lower than the average degree inside bases connected to 1, which translates into a fewer occurrences of variables in the considered matrices. Since finding good values for the $\mathbf{h}$'s is the most computationally demanding operation performed by the algorithm, we certainly want to avoid it as much as possible.

\medskip 

Another advantage of considering basis which is a complete staircase rather than connected to one is instructively enlightened by the following example. We thank A. Iarrobino for having pointed it out to us. 
 
\begin{exmp}[Perazzo's cubic \cite{Pe}]\label{ex:Perazzo}
Let $F= xu^3+yuv^2+zu^2v$. The partial derivatives $\partial_x(F)=u^{3}$, $\partial_y (F)=uv^{2}$ and $\partial_z(F)=u^{2}v$ are algebraically dependent, so by Gordan-Noether criterion \cite{GN} the Hessian of $F$ is $0$. By the Maeno-Watanabe criterion \cite{MW} this implies that the $5\times 5$ principal minor of the first catalecticant matrix (cf. Definition \ref{first:catalecticant}) does not have maximal rank for any choices of variables, regardless $\{x,y,z,u,v\}$ is a proper set of essential variables for $F$.
By means of our algorithm this argument shows that Perazzo's cubic has Waring rank strictly greater than $5$ without even testing it. However, should one straightforwardly apply the first version of this algorithm, 867 useless bases would be checked before reaching the same conclusion.

Optimization aside, this also shows that additional theoretical information about bases might well help out while decomposing specific tensors.
\end{exmp} 

\subsection{Looking at eigenvectors}\label{section:Looking:eigenvectors}

In this section we stress the importance of Corollary \ref{RadicalDistinctEigenvectors}: checking whether $I_{\Lambda}$ is radical is actually equivalent to verifying the condition on common eigenvectors, so that every not equivalent test would inevitably carry some issues. As an example, asking for multiplication matrices to have simple eigenvalues (as in \cite{SymTensorDec}) is a sufficient condition in order to have a radical ideal $I_\Lambda$, but it is not necessary if we search for a minimal decomposition. In fact, there are instances where this condition misses good Waring decompositions, such as the following. Let us consider
\begin{equation*}
    F = (x+y)^3+(x+z)^3+(x+y+z)^3 \in \Cx[x,y,z].
\end{equation*}

There is only one $B$ with three elements to test, namely $B = \{1,y,z\}$, which gives the following multiplication matrices
\begin{equation*}
    (\M_{y}^B)^t =
    \left( \begin{array}{ccc}
    0 & 1 & 0 \\
    0 & 1 & 0 \\
    -1 & 1 & 1 \end{array}\right), \ \
    (\M_{z}^B)^t =
    \left( \begin{array}{ccc}
    0 & 0 & 1 \\
    -1 & 1 & 1 \\
    0 & 0 & 1 \end{array}\right).
\end{equation*} 
Should we check their eigenvalues, we would conclude that the Waring rank of $F$ is at least $4$ because none of them have only simple eigenvalues. However, they have exactly three common eigenvectors
\begin{equation*}
    \begin{array}{c}
    1 \ \rightarrow \\
    y \ \rightarrow \\
    z \ \rightarrow \end{array} \ \ \ \
    \left( \begin{array}{c}
    1 \\
    0 \\
    1\end{array}\right), \ \
    \left( \begin{array}{c}
    1 \\
    1 \\
    0 \end{array}\right), \ \
    \left( \begin{array}{c}
    1 \\
    1 \\
    1 \end{array}\right),
\end{equation*} 
which in fact give rise to a correct Waring decomposition of $F$. Nevertheless, we should mention that these cases almost never occur by using a general set of variables as required by the input of our algorithm, then one might prefer checking the eigenvalues to speed the algorithm up.

\section{The tangential case}\label{Section:Tangential}

\subsection{Generalizing previous results}

Let $l,g \in R_{\leq 1}$ be affine linear forms. In this section we show how a slight generalization of the algorithm proposed in Section \ref{SymTD:algorithm} may produce decompositions of degree $d$ polynomials made of pieces of the form $l^{d-1}g$, namely by using points on the tangent space of the Veronese variety. The Waring decomposition arises from the particular case $g = l$.

First, we need to generalize Proposition \ref{Prop:DualLinearAffine}.

\begin{prop}\label{Prop:TangentialCase}
Let $l = 1 + l_1x_1 + \dots + l_nx_n \in R_{\leq 1}$ and $g = 1 + g_1x_1 + \dots + g_nx_n \in R_{\leq 1}$. For every $d \in \Z_{\geq 1}$ we have
\begin{equation*}
    \tau( l^{d-1} g ) = \1_{l} + \frac{1}{d} \1_{l} \circ \left[\sum_{i = 1}^n (g_i - l_i) \frac{\de}{\de x_i}\right] \in R_{\leq d}^*.
\end{equation*}
\end{prop}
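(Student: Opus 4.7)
The plan is to reduce everything to the already-proved Proposition \ref{Prop:DualLinearAffine} by linearity, and then to handle one auxiliary identity of the form
\[
\tau(x_i l^{d-1}) = \frac{1}{d}\,\1_l \circ \frac{\de}{\de x_i}.
\]

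First I would write $g = l + \sum_{i=1}^n (g_i - l_i) x_i$, so that $l^{d-1} g = l^d + \sum_{i=1}^n (g_i - l_i)\, x_i\, l^{d-1}$. Since the dual map $\tau$ is $\k$-linear, Proposition \ref{Prop:DualLinearAffine} already gives the first summand as $\tau(l^d) = \1_l$, and the whole statement reduces to proving the displayed auxiliary identity for each $i$.

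To establish that identity, I would expand $l^{d-1}$ via the multinomial formula, obtaining
\[
x_i l^{d-1} = \sum_{\a_i \geq 1,\ |\a|\leq d} \binom{d-1}{\a - e_i}(l_1,\dots,l_n)^{\a - e_i}\, \x^{\a},
\]
where $e_i$ is the $i$-th standard basis vector. Then for any $f = \sum_{|\a|\leq d} f_{\a} \x^{\a}$, the definition of the apolar product gives
\[
\tau(x_i l^{d-1})(f) = \sum_{\a_i \geq 1,\ |\a|\leq d} \frac{\binom{d-1}{\a - e_i}}{\binom{d}{\a}}\, (l_1,\dots,l_n)^{\a - e_i}\, f_{\a}.
\]
The key combinatorial step is the identity
\[
\frac{\binom{d-1}{\a - e_i}}{\binom{d}{\a}} = \frac{\a_i}{d},
\]
which is an immediate factorial computation. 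Substituting this, and noticing that $\de_i f = \sum_{\a_i\geq 1}\a_i f_{\a}\,\x^{\a - e_i}$, the right-hand side equals $\frac{1}{d}(\de_i f)(l_1,\dots,l_n) = \frac{1}{d}\,\1_l(\de_i f)$, which is exactly $\frac{1}{d}\,\1_l\circ\frac{\de}{\de x_i}(f)$.

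Assembling these pieces by linearity yields
\[
\tau(l^{d-1}g) = \1_l + \frac{1}{d}\,\1_l \circ \sum_{i=1}^n (g_i - l_i)\frac{\de}{\de x_i},
\]
as desired. There is no real obstacle here: the only non-formal step is recognizing the ratio of multinomial coefficients, and everything else is manipulation of the definitions of $\tau$, the apolar pairing, and the evaluation $\1_l$.
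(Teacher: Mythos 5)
Your proof is correct, and it follows a genuinely different route from the paper's. The paper computes $(l^{d-1}g)^*(f)$ directly as a sum of two pieces, namely $\frac{1}{d}\1_l\bigl(\sum_i g_i\,\de_i f\bigr)+\frac{1}{d}\1_l\bigl(\sum_{|\a|\le d}(d-|\a|)f_\a\x^\a\bigr)$, and then invokes Euler's Homogeneous Function Theorem to rewrite the second piece as $\frac{1}{d}\1_l\bigl(df - \sum_i x_i\,\de_i f\bigr)$ before recombining. You instead split $g=l+\sum_i(g_i-l_i)x_i$, so that $l^{d-1}g = l^d + \sum_i(g_i-l_i)\,x_i l^{d-1}$, and by linearity of $\tau$ this reduces the claim to the already-established $\tau(l^d)=\1_l$ together with the clean auxiliary identity $\tau(x_i l^{d-1})=\frac{1}{d}\1_l\circ\de_i$, which you verify by the coefficient ratio $\binom{d-1}{\a-e_i}/\binom{d}{\a}=\a_i/d$. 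Your version avoids Euler's theorem entirely, explicitly reuses Proposition~\ref{Prop:DualLinearAffine}, and isolates a reusable one-variable lemma; the paper's version is more of a single direct expansion whose only nontrivial manipulation is Euler's identity. Both arguments are complete and essentially equivalent in length.
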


\proof For every $f = \sum_{|\a| \leq d} f_{\a}\x^{\a} \in R_{\leq d}$ a straightforward computation shows that
\begin{equation*}
    (l^{d-1} g)^* (f) = \left(\frac{1}{d} \1_{l} \circ \sum_{i = 1}^n g_i \frac{\de}{\de x_i}\right) (f) + \frac{1}{d} \1_{l} \left( \sum_{|\a| \leq d} (d - |\a|) f_{\a} \x^{\a} \right).
\end{equation*}
Now we recall the Euler's Homogeneous Function Theorem: for every homogeneous function $F$ of order $d$ in $n+1$ variables, we have
\begin{equation*}
    \sum_{i = 0}^n x_i \frac{\de F}{\de x_i} = d F(\x).
\end{equation*}
We use as $F \in R_d^h$ the degree $d$ homogenization of $f$, then we dehomogenize the above formula with respect to $x_0$ obtaining
\begin{equation*}
    \sum_{|\a| \leq d} (d - |\a|) f_{\a} \x^{\a} = d f - \sum_{i = 1}^n x_i \frac{\de f}{\de x_i}.
\end{equation*}
Therefore, we conclude
\begin{align*}
    (l^{d-1} g)^* (f) &= \left(\frac{1}{d} \1_{l} \circ \sum_{i = 1}^n g_i \frac{\de}{\de x_i}\right) (f) + \frac{1}{d} \1_{l} \left( d f - \sum_{i = 1}^n x_i \frac{\de f}{\de x_i} \right) \\
    &= \left( \1_{l} + \frac{1}{d} \1_{l} \circ \sum_{i = 1}^n (g_i - l_i) \frac{\de}{\de x_i}\right) (f),
\end{align*}
which proves the statement.
\endproof

We use Proposition \ref{Prop:TangentialCase} to read the coefficients of these forms from the multiplication operators.

\begin{thm} \label{Thm:TangentialDecomposition}
Let $l = 1 + l_1x_1 + \dots + l_nx_n \in R_{\leq 1}$ and $g = 1 + g_1x_1 + \dots + g_nx_n \in R_{\leq 1}$. Let $\Lambda \in R^*$ such that $I_{\Lambda}$ is zero-dimensional and $\Gamma \in R^*$ such that $\Gamma|_{R_{\leq d}} = (l^{d-1}g)^* \in R_{\leq d}^*$ and $\Gamma \in I_{\Lambda}^{\perp}$.
Let also $\{ M^t_{x_i} \}_{i \in \{1, \dots, n\}}$ be the multiplication-by-$x_i$ operators on $\A_{\Lambda}^*$.
Then
\begin{itemize}
    \item for the $j$'s such that  $g_j = l_j$ we have that $\Gamma$ is an eigenvector of $M_{x_j}^t$;
    \item for the $j$'s such that $g_j \neq l_j$ we have that $\Gamma$ is a generalized eigenvector of rank 2 of $M_{x_j}^t$ and the chain it generates is $\{\Gamma, \1_{l}\}$.
\end{itemize}
\end{thm}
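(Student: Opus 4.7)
The plan is to use Proposition \ref{Prop:TangentialCase} to write $\Gamma$ explicitly on $R_{\leq d}$ as $\Gamma|_{R_{\leq d}} = \1_l + \tfrac{1}{d}\1_l \circ D$, where $D = \sum_{i=1}^n (g_i - l_i)\partial_i$, and then to compute the action $M_{x_j}^t \Gamma = x_j \star \Gamma$ directly. First I need to verify that $\1_l$ is itself an element of $\A_\Lambda^*$, equivalently that $l \in \VV(I_\Lambda)$: this follows from Theorem \ref{Thm:Structure} applied to $\Gamma \in I_\Lambda^{\perp}$, since any generalized decomposition of $\Gamma$ must have $l$ in its support (matching the evaluation-at-$l$ piece that appears in the explicit formula above).

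Next I carry out the central computation. For $f \in R_{\leq d-1}$, so that $x_j f \in R_{\leq d}$, the Leibniz rule gives $\partial_i(x_j f) = \delta_{ij} f + x_j \partial_i f$, and a direct expansion yields
\begin{equation*}
(x_j \star \Gamma)(f) = \Gamma(x_j f) = l_j\, \1_l(f) + \tfrac{g_j-l_j}{d}\, \1_l(f) + \tfrac{l_j}{d}\, \1_l\!\left(\sum_i (g_i-l_i)\partial_i f\right).
\end{equation*}
Factoring the first and third terms gives exactly $l_j \Gamma(f) + \tfrac{g_j-l_j}{d}\, \1_l(f)$, so that
\begin{equation*}
M_{x_j}^t \Gamma \;=\; l_j\, \Gamma \;+\; \tfrac{g_j-l_j}{d}\, \1_l
\end{equation*}
as elements of $\A_\Lambda^*$. (Both sides lie in $I_\Lambda^{\perp}$, and they have already been shown to agree on $R_{\leq d-1}$; since $\A_\Lambda$ is generated by classes of monomials of degree $\leq d-1$ in the situations of interest for the algorithm, equality passes to the quotient. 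This surjectivity issue is the main technical point to be checked, analogously to Proposition \ref{Degd}.)

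With this identity in hand, the two cases of the theorem are immediate. If $g_j = l_j$, then $M_{x_j}^t \Gamma = l_j \Gamma$, so $\Gamma$ is an eigenvector with eigenvalue $l_j$. If $g_j \neq l_j$, set $c = (g_j-l_j)/d \neq 0$; then $(M_{x_j}^t - l_j \id)\Gamma = c\, \1_l \neq 0$, while by Theorem \ref{Eigenvectors}(ii) the form $\1_l$ is itself an eigenvector of $M_{x_j}^t$ with eigenvalue $l_j$, so $(M_{x_j}^t - l_j \id)^2 \Gamma = c (M_{x_j}^t - l_j \id)\1_l = 0$. Therefore $\Gamma$ is a rank-$2$ generalized eigenvector and the Jordan chain it generates is $\{\Gamma, c\,\1_l\} = \{\Gamma, \1_l\}$ up to scalar. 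The main obstacle, as noted, is justifying the transfer of the identity from $R_{\leq d-1}$ to all of $\A_\Lambda$; once that is settled everything else is formal.
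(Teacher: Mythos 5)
Your central computation of $x_j\star\Gamma$ is correct and matches the paper's, but you have correctly flagged a real gap in your own argument, and it is not merely a ``situations of interest'' technicality. Working with the formula for $\Gamma$ only on $R_{\leq d}$, the Leibniz expansion lets you evaluate $\Gamma(x_jf)$ only for $f\in R_{\leq d-1}$, so you have verified the identity $M_{x_j}^t\Gamma = l_j\Gamma + \tfrac{g_j-l_j}{d}\,\1_l$ only on $\langle R_{\leq d-1}\rangle$ modulo $I_\Lambda$. There is no guarantee that $R_{\leq d-1}$ surjects onto $\A_\Lambda$; indeed Proposition~\ref{Degd} only gives representatives of degree $\leq d$, not $d-1$, and for the tangential case the paper explicitly says (Remark~\ref{Rmk:degreeofB}) that even the degree-$d$ bound is not known, which is why the algorithm searches in $\B_r$ rather than $\B_d$. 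So the transfer step you leave unproved would actually fail as stated.

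The paper sidesteps this entirely by first pinning down $\Gamma$ as an element of $R^*$, not just its restriction to $R_{\leq d}$. Since $\Gamma\in I_\Lambda^{\perp}$, Theorem~\ref{Thm:Structure} gives a generalized decomposition $\Gamma = \sum_{i=1}^e \1_{\z_i}\circ p_i(\d)$ supported on $\VV(I_\Lambda)$. Comparing this with the explicit restriction $\Gamma|_{R_{\leq d}} = \1_l\circ\big[1 + \sum_i \tfrac{g_i-l_i}{d}\,\de_i\big]$, and using that $\{\1_\zeta\circ\d^{\a}\}$ is a $\k$-basis of $R^*$ (cited from \cite{FrenchRef}), one concludes that $\1_l$ is one of the $\1_{\z_k}$ and that, up to scalars, $\Gamma = \1_l + \tfrac{1}{d}\1_l\circ\sum_i(g_i-l_i)\de_i$ holds \emph{globally} in $R^*$. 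Once this is known, the product-rule computation of $x_j\star\Gamma$ can be carried out in $R^*$ for all $f\in R$, and passing to $\A_\Lambda^*$ is automatic because both sides lie in $I_\Lambda^\perp\cong\A_\Lambda^*$. If you rework your argument to first upgrade the formula for $\Gamma$ to all of $R^*$ along these lines, the rest of your proof goes through unchanged.
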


\proof If $\VV(I_{\Lambda}) = \{\z_i\}_{i \in \{1, \dots, e\}}$ by Theorem \ref{Thm:Structure} we have $\Gamma = \sum_{i = 1}^e \1_{\z_i} \circ p_i(\delta)$ and by Proposition \ref{Prop:TangentialCase} we have $\Gamma|_{R_{\leq d}} = \1_{l} \circ \left[1 + \sum_{i = 1}^n \frac{(g_i - l_i)}{d} \frac{\de}{\de x_i} \right] \in R_{\leq d}^*$. Since $\{ [\x^{\a}(\delta)]_{\zeta} \}_{\a \in \N^n}$ is a $\k$-basis of $R^*$ \cite[Chapter 7]{FrenchRef} we conclude that $\1_{l} = \1_{\z_k}$ for some $k \in \{1, \dots, e\}$ and that, up to scalars, we have
\begin{equation*}
    \Gamma = \1_{l} + \frac{1}{d} \1_{l} \circ \sum_{i = 1}^n (g_i - l_i) \frac{\de}{\de x_i} \in R^*.
\end{equation*}
By the derivation of a product rule we have that for every $j \in \{1, \dots, n\}$
\begin{align*}
    M_{x_j}^t \Gamma &= x_j \star \Gamma = l_j\1_{l} + \frac{l_j}{d} \1_{l} \circ \sum_{i = 1}^n (g_i - l_i) \frac{\de}{\de x_i} + \frac{1}{d} \sum_{i = 1}^n (g_i - l_i) \frac{\de}{\de x_i}(x_j) \1_{l}\\
    &= l_j \Gamma + \frac{g_j - l_j}{d} \1_l.
\end{align*}
If $g_j = l_j$ then $\Gamma$ is an eigenvector of $M_{x_j}^t$, whereas if $g_j \neq l_j$ then $(M_{x_j}^t - l_j \1) (\Gamma)$ is a non-zero multiple of $\1_l$, which is an eigenvector for every $M_{x_j}^t$ by Theorem \ref{Eigenvectors}. This means precisely that $\Gamma$ is a generalized eigenvector of rank 2 of $M_{x_j}^t$ and that its chain is $\{\Gamma, \1_{l}\}$.
\endproof

Theorem \ref{Thm:TangentialDecomposition} shows that we may find decompositions of a given degree $d$ polynomial involving pieces of type $l^{d-1}g$ (possibly with $l = g$) by looking at the generalized eigenvectors of multiplication matrices. However, if we want to minimize the number of considered linear forms, we need to count twice the pieces where $l \neq g$. It motivates the following definition.

\begin{defn}
Let $F \in R_d^h$. We define its \emph{tangential rank} as the minimal $r \in \N$ such that there exist two integers $k \leq s$ with 
 $s+k = r$, scalars $\{\lambda_1, \dots, \lambda_s\} \se \k$ and linear forms $\{L_1, \dots, L_r\} \se R_1^h$ such that
\begin{equation*}
    F = \sum_{i = 1}^k \lambda_i L_i^{d-1}L_{s+i} + \sum_{i = k+1}^s \lambda_i L_i^d.
\end{equation*}
Such a decomposition for which $r$ is minimal is referred to as a \emph{tangential decomposition} of $F$.
\end{defn}

For a reader who is not familiar with Algebraic Geometry, we have chosen the denomination \emph{tangential decomposition} because it corresponds to a decomposition of $[F]$ in terms of points on the tangential variety of a Veronese variety. Remark that the tangential rank is not the minimum number of points on such a variety occurring in a tangential decomposition. Indeed it can  be identified with the minimal length of a 0-dimensional scheme contained in the tangential variety of the Veronese variety whose span contains $[F]$.

In the next section we adapt the algorithm of Section \ref{SymTD:algorithm} in order to detect tangential decompositions.

\begin{rmk} \label{Rmk:degreeofB}
For the tangential case we do not currently have a result such as Proposition \ref{Degd}, which in the Waring case ensures us that the basis may be chosen of degree bounded by $d = \deg F$.
This means that we are are forced to require our algorithm to search inside $\B_{r}$ instead of $\B_{d}$, even if we have never encountered cases where $\B_{d}$ does not suffice.
\end{rmk}

\subsection{Tangential decomposition algorithm} \label{Sec:TangentialAlgorithm}
\begin{mdframed}[]
\begin{alg}[Tangential Decomposition]\label{alg:tg}\end{alg}
\vspace{0.1cm}
\noindent\textbf{Input:} A degree $d \geq 2$ polynomial $F \in R^h_d$ written by using a general set of essential variables. \\
    \textbf{Output:} A tangential decomposition of $F$.
\begin{enumerate}
\item \label{Dehom&Hankel2} Construct the matrix $\H_{\Lambda}(\mathbf{h})$ with parameters $\mathbf{h} = \{h_{\a}\}_{\substack{\a \in \N^n\\\hspace{-0.1cm}|\a| > d}}$.
\item \label{Startingr2} Set $r := \rk \rH_{f^*}$.
\item \label{MainLoop2} For $B \in \B_{r}$  and $|B| = r$ do
\begin{itemize}
\item Find parameters $\mathbf{h}$ such that:\\
- $\det \H_{\Lambda}^B \neq 0$.\\
- The operators $\M_i^B := \H^B_{x_i \star \Lambda}(\H^B_{\Lambda})^{-1}$ commute.\\
- There are $\frac{r}{2} \leq s \leq r$ distinct eigenvectors $v_1, \dots, v_s$ common to all the $\M_i^B$'s. \\
- There are $r-s$ distinct generalized of rank up to $2$ eigenvectors $v_{s+1}, \ldots , v_r$ common to all the $\M_i^B$'s such that
\begin{itemize}
    \item they have rank $2$ for at least one $\M_i^B$,
    \item when they have rank $2$, their chain is always $\{v_{s+i}, v_i\}$.
\end{itemize}
\item If one finds such parameters then go to step (\ref{FinalStep2}).
\end{itemize}
\item \label{Increaser2} Set $r := r + 1$ and restart step (\ref{MainLoop2}).
\item \label{FinalStep2} Define $\{L_i = (v_i)_1x_0 + \dots + (v_i)_{n+1}x_n\}_{i \in \{1, \dots, r\}}$,  find $\lambda_1, \dots, \lambda_r \in \k$ by solving the linear system 
\begin{equation*}
    F = \sum_{i = 1}^{r-s} L_i^{d-1}(\lambda_i L_i + \lambda_{s+i} L_{s+i}) + \sum_{i = r-s+1}^s \lambda_i L_i^d
\end{equation*}
and return the obtained decomposition of $F$.
\end{enumerate}
\end{mdframed}

\subsection{Some examples}

Here we perform the tangential decomposition algorithm of Section \ref{Sec:TangentialAlgorithm} on some polynomials, detailing the crucial steps.

\begin{exmp}
We begin with an easy case, where there is no need to fill the generalized Hankel matrix.

Let $F \in \Cx[x,y,z]$ be the homogeneous of degree $5$ polynomial given by

\begin{footnotesize}\begin{align*}
    F =& \ x^5 + 32x^4y - 36x^4z - 62x^3y^2 + 220x^3yz - 154x^3z^2 + 172x^2y^3 - 744x^2y^2z \\
    &+ 1140x^2yz^2 - 556x^2z^3 - 157xy^4 + 948xy^3z - 2118xy^2z^2 + 2132xyz^3 - 799xz^4\\
    &+ 64y^5 - 482y^4z + 1448y^3z^2 - 2172y^2z^3 + 1628yz^4 - 488z^5.
\end{align*}\end{footnotesize}

We dehomogenize $F$ by $x = 1$ and construct the generalized Hankel matrix $\H_{\Lambda}(\mathbf{h})$. Below we include the $9 \times 9$ principal minor.

\begin{tiny}\[ \left( \begin{array}{c|cccccccccc}
  & 1 & y & z & y^2 & yz & z^2 & y^3 & y^2z & yz^2 \\
\hline
1  & 1    &32/5   &-36/5   &-31/5      &11   &-77/5    &86/5  &-124/5      &38  \\
y  & 32/5   &-31/5      &11    &86/5  &-124/5      &38  &-157/5   &237/5  &-353/5  \\
z  & -36/5      &11   &-77/5  &-124/5     &38  &-278/5   &237/5  &-353/5   &533/5  \\
y^2  &  -31/5    &86/5  &-124/5  &-157/5   &237/5  &-353/5      &64  &-482/5   &724/5  \\
yz  & 11  &-124/5      &38   &237/5  &-353/5   &533/5  &-482/5   &724/5 &-1086/5 \\
z^2  & -77/5      &38  &-278/5  &-353/5   &533/5  &-799/5   &724/5 &-1086/5  &1628/5   \\
y^3  &  86/5  &-157/5   &237/5      &64  &-482/5   &724/5    &h_{6,0}    &h_{5,1}    &h_{4,2}  \\
y^2z & -124/5   &237/5  &-353/5  &-482/5   &724/5 &-1086/5    &h_{5,1}    &h_{4,2}    &h_{3,3}  \\
yz^2  & 38  &-353/5   &533/5   &724/5 &-1086/5  &1628/5    &h_{4,2}    &h_{3,3}    &h_{2,4}  \\
\end{array} \right).\]\end{tiny}

The rank of the largest numerical submatrix is $5$, hence we start from $r = 5$.
We pick $B = \{1,y,z,y^2,yz\}$, check that $\H_{\Lambda}^B$ is invertible and define the multiplication operators
\begin{footnotesize}\begin{equation*}
     (\M_{y}^B)^t =
\left( \begin{array}{ccccc}
0 & 1 & 0 & 0 & 0 \\
0 & 0 & 0 & 1 & 0 \\
0 & 0 & 0 & 0 & 1 \\
-2 & 3 & 0 & 0 & 0 \\
 3 & -6 & -1 & 3 & 2 \end{array}\right), \ \
(\M_{z}^B)^t =
\left( \begin{array}{ccccc}
0 & 0 & 1 & 0 & 0 \\
0 & 0 & 0 & 0 & 1 \\
20/9 & -31/9  &  -1 & 20/9  &   1 \\
3  &  -6  &  -1   &  3  &   2 \\
-13/9 & 26/9  &  -1 & -4/9  &  1 \end{array}\right).
\end{equation*}
\end{footnotesize}They commute and their eigenspaces are
\begin{scriptsize}
\begin{equation*}
    \left\langle \left( \begin{array}{c}1 \\ -2 \\ 3 \\ 4 \\ -6 \end{array}\right) \right\rangle,
    \left\langle \left( \begin{array}{c}1 \\ 1 \\ 0 \\ 1 \\ 0 \end{array}\right), \left( \begin{array}{c}0 \\ 0 \\ 1 \\ 0 \\ 1 \end{array}\right) \right\rangle
    \begin{normalsize}\text{and}\end{normalsize} \left\langle \left( \begin{array}{c}1 \\ -2 \\ 3 \\ 4 \\ -6 \end{array}\right) \right\rangle,
    \left\langle \left( \begin{array}{c}1 \\ 1 \\ 1 \\ 1 \\ 1 \end{array}\right)\right\rangle,  \left\langle \left( \begin{array}{c}1 \\ 0 \\ -1 \\ -1 \\ 0 \end{array}\right), \left( \begin{array}{c}0 \\ 1 \\ 0 \\ 2 \\ -1 \end{array}\right) \right\rangle
\end{equation*}
\end{scriptsize}respectively. We see that there are $s = 3$ common eigenvectors, corresponding to the linear forms $l_1 = 1 + y + z$, $l_2 = 1 + y -z$ and $l_3 = 1 -2y + 3z$. Now we look at rank $\leq 2$ eigenvectors: those of $(\M_y^B)^t$ are
\begin{scriptsize}
\begin{equation*}
    \left\langle \left( \begin{array}{c}1 \\ -2 \\ 3 \\ 4 \\ -6 \end{array}\right) \right\rangle,
    \left\langle \left( \begin{array}{c}1 \\ 1 \\ 0 \\ 1 \\ 0 \end{array}\right),
    \left( \begin{array}{c}0 \\ 0 \\ 1 \\ 0 \\ 1 \end{array}\right)
    \left( \begin{array}{c}0 \\ 1 \\ 0 \\ 2 \\ 0 \end{array}\right)
    \left( \begin{array}{c}0 \\ 0 \\ 0 \\ 0 \\ 1 \end{array}\right)\right\rangle,
\end{equation*}
\end{scriptsize}whereas those of $(\M_z^B)^t$ are
\begin{scriptsize}\begin{equation*}
   \left\langle \left( \begin{array}{c}1 \\ -2 \\ 3 \\ 4 \\ -6 \end{array}\right) \right\rangle,
    \left\langle \left( \begin{array}{c}1 \\ 1 \\ 1 \\ 1 \\ 1 \end{array}\right),
    \left( \begin{array}{c}1 \\ 0 \\ 0 \\ -1 \\ -1 \end{array}\right) \right\rangle,
    \left\langle \left( \begin{array}{c}1 \\ 0 \\ -1 \\ -1 \\ 0 \end{array}\right),
    \left( \begin{array}{c}0 \\ 1 \\ 0 \\ 2 \\ -1 \end{array}\right) \right\rangle.
\end{equation*}
\end{scriptsize}

\noindent The vector $(1, 0, 0, -1, -1)$ is a rank-2 eigenvector for both $(\M_y^B)^t$ and $(\M_z^B)^t$, with a chain ending in $\langle (1, 1, 1, 1, 1) \rangle$. Thus, the linear form $l_4 = 1$ is paired with $l_1$ in the tangential decomposition.
We notice that according to Theorem \ref{Thm:TangentialDecomposition}, since both the $y$ and the $z$ coefficients of $l_4$ are different from the correspondent coefficients of $l_1$, for both the multiplication matrices the vector $(1, 0, 0, -1, -1)$ appears as a rank-2 eigenvector.

On the other side, the vector $(1, 0, -1, -1, 0)$ is an eigenvector for $(\M^B_z)^t$ and a rank-2 eigenvector with chain ending in $\langle (1, 1, -1, 1, -1) \rangle$ for $(\M^B_y)^t$.
Again, we notice that $l_5 = 1 - z$ has the same $z$-coefficient of $l_2$, but a different $y$-one, as prescribed by Theorem \ref{Thm:TangentialDecomposition}.

Since we have found $2 = r-s$ generalized eigenvectors satisfying the required conditions, we obtain a tangential decomposition of $F$ by solving the linear system

\begin{footnotesize}
\begin{equation*}
    F = (x+y+z)^4\big( \lambda_1(x+y+z) + \lambda_4x \big) + (x+y-z)^4\big( \lambda_2(x+y-z) + \lambda_5(x-z) \big) + \lambda_3(x-2y+3z)^5.
\end{equation*}
\end{footnotesize}

\noindent The solution $(\lambda_1, \dots, \lambda_5) = (0, 0, -2, 1, 2)$ leads to the tangential decomposition
\begin{equation*}
    F = (x+y+z)^4(x) + 2(x+y-z)^4(x-z) - 2(x-2y+3z)^5.
\end{equation*}
Therefore the tangential rank of $F$ is 5.

\end{exmp}

\begin{exmp}
This example is meant to stress that the linear forms appearing in a tangential decomposition are not required to be different. Clearly a tangential decomposition can not contain repeated $d-1$ powers of the same linear form, but it might well contain repeated linear factors.

Let $F \in \Cx[x,y,z]$ be the homogeneous of degree $7$ polynomial given by

\begin{footnotesize}\begin{align*}
    F =& -2x^7 - 4x^6y + 92x^6z + 15x^5y^2 - 675x^5z^2 - 20x^4y^3 + 2700x^4z^3 + 15x^3y^4\\
      &- 6075x^3z^4 - 6x^2y^5 + 7290x^2z^5 + xy^6 - 3645xz^6.
\end{align*}\end{footnotesize}

We dehomogenize $F$ by $x = 1$ and construct the generalized Hankel matrix $\H_{\Lambda}(\mathbf{h})$. By using the basis $B = \{1, y, z, y^2, z^2, y^3\}$ we get the multiplication matrices

\begin{small}
\begin{equation*}
(\M_{y}^B)^t =
\left( \begin{array}{cccccc}
0 & 1 & 0 & 0 & 0 & 0 \\
0 & 0 & 0 & 1 & 0 & 0 \\
0 & 0 & 0 & 0 & 0 & 0 \\
0 & 0 & 0 & 0 & 0 & 1 \\
0 & 0 & 0 & 0 & 0 & 0 \\
0 & 0 & 0 & -1 & 0 & -2 \end{array}\right), \ \
(\M_{z}^B)^t =
\left( \begin{array}{cccccc}
0 & 0 & 1 & 0 & 0 & 0 \\
0 & 0 & 0 & 0 & 0 & 0 \\
0 & 0 & 0 & 0 & 1 & 0 \\
0 & 0 & 0 & 0 & 0 & 0 \\
0 & 9 & -9 & 18 & -6 & 9 \\
0 & 0 & 0 & 0 & 0 & 0 \end{array}\right).
\end{equation*}
\end{small}

\noindent The common rank-1 eigenvectors are $v_1 = (1, 0, 0, 0, 0, 0), v_2 = ( 1, -1,  0,  1,  0, -1)$ and $v_3 = ( 1,  0, -3,  0,  9,  0)$ whereas the generalized eigenvectors are
\begin{itemize}
    \item $v_4 = (1, 1, 1, 0, 0, 0)$, rank-2 for both $\M_{y}^B$ and $\M_{z}^B$, relative to $\langle v_1 \rangle$.
    \item $ v_5 = (1, 0, 0, -1, 0, 2)$, rank-2 for $\M_{y}^B$ relative to $\langle v_2 \rangle$ and rank-1 for $\M_{z}^B$.
    \item $v_6 = (1, 0, 0, 0, -9, 0)$, rank-2 for $\M_{z}^B$ relative to $\langle v_3 \rangle$ and rank-1 for $\M_{y}^B$.
\end{itemize}
Thus, the solution of the linear system leads to
\begin{equation*}
    F = 2x^6(x+y+z) + (x-y)^6x - 5(x-3z)^6x.
\end{equation*}
The tangential rank of $F$ is 6. We notice that the linear form $x$ appears three times, but only once as a sixth power.
\end{exmp}
\begin{exmp}
In this example we illustrate a difficult case, where the values of some $\mathbf{h}$'s have to be determined.

Let $F \in \Cx[x,y,z]$ be the homogeneous of degree $3$ polynomial defined by
\begin{align*}
    F   &= (x+y)^2(x+z)+(x-z)^2(x+y+z) \\
        &= 2x^3 + 3 x^2y + xy^2 - xz^2 + y^2z + yz^2 + z^3.
\end{align*}
We dehomogenize $F$ by $x = 1$ and construct $\H_{\Lambda}(\mathbf{h})$, which has the following $7 \times 7$ principal minor

{ \footnotesize \begin{equation*} \left( \begin{array}{c|ccccccc}
  & 1 & y & z & y^2 & yz & z^2 & y^3 \\
\hline
1  & 2 & 1 & 0 & 1/3 &0 & -1/3 & 0 \\
y  & 1 & 1/3 & 0 & 0 & 1/3 & 1/3  &h_{4,0} \\
z  & 0 & 0 & -1/3 & 1/3 & 1/3 & 1 &h_{3,1} \\
y^2  & 1/3 & 0 & 1/3 &h_{4,0}   &h_{3,1}  &h_{2,2}  &h_{5,0} \\
yz  & 0 & 1/3 & 1/3 &h_{3,1}   &h_{2,2}  &h_{1,3}  &h_{4,1} \\
z^2  & -1/3 & 1/3 & 1 &h_{2,2}   &h_{1,3}  &h_{0,4}  &h_{3,2} \\
y^3  &  0  &h_{4,0}   &h_{3,1}  &h_{5,0}  &h_{4,1}  &h_{3,2}   &h_{6,0}
\end{array} \right).\end{equation*}}

\noindent The rank of the largest numerical submatrix is $3$. However, the only choice for a basis $B$ with $r=3$ is $B = \{1,y,z\}$, which leads to the following matrices
\begin{equation*}
(\M_{y}^B)^t =
\left( \begin{array}{ccc}
0 & 1 & 0 \\
-1/3 & 1 & -1 \\
1 & -2 & -1 \end{array}\right), \ \
(\M_{z}^B)^t =
\left( \begin{array}{ccc}
0 & 0 & 1 \\
1 & -2 & -1 \\
4/3 & -3  & -3 \end{array}\right).
\end{equation*}
They do not commute. Since we have tested all the possible bases for $r = 3$ and none has worked, we start considering $r = 4$. Here we have three possible choices for $B$ and we consider $B = \{1, y, z, y^2\}$. Thus, we want
\begin{equation*}
\H_{\Lambda}^{B} = \left( \begin{array}{cccc}
 2 & 1 & 0 & 1/3 \\
1 & 1/3 & 0 & 0 \\
0 & 0 & -1/3 & 1/3 \\
1/3 & 0 & 1/3 &h_{4,0}
\end{array}\right)
\end{equation*}
to be invertible, so we pick $h_{4,0} = 0$. Afterwards, we can construct the multiplication matrices

\begin{footnotesize}\begin{align*}
(\M_{y}^B)^t &=
\left( \begin{array}{cccc}
0 & 1 & 0 & 0 \\
0 & 0 & 0 & 1 \\
\frac{3}{4}h_{3,1} + 1 & -\frac{9}{4}h_{3,1} - 2 & \frac{9}{4}h_{3,1} - 1 & \frac{9}{4}h_{3,1} \\
\frac{3}{4}h_{3,1} + \frac{3}{4}h_{5,0} & -\frac{9}{4}h_{3,1} - \frac{9}{4}h_{5,0} & -\frac{3}{4}h_{3,1} + \frac{9}{4}h_{5,0} & \frac{9}{4}h_{3,1} + \frac{9}{4}h_{5,0} \end{array}\right), \\
(\M_{z}^B)^t &=
\left( \begin{array}{cc}
0 & 0 \\
\frac{3}{4}h_{3,1} + 1 & -\frac{9}{4}h_{3,1} - 2 \\
\frac{3}{4}h_{2,2} + \frac{7}{4} &  -\frac{9}{4}h_{2,2} - \frac{17}{4} \\
\frac{9}{4}h_{3,1} + \frac{3}{4}h_{2,2} + \frac{3}{4}h_{4,1} - \frac{1}{4} & -\frac{15}{4}h_{3,1} - \frac{9}{4}h_{2,2} - \frac{9}{4}h_{4,1} + \frac{3}{4} \end{array} \right. \\
& \qquad \qquad \qquad \qquad \left. \begin{array}{cc}
 1 & 0 \\
 \frac{9}{4}h_{3,1} - 1 & \frac{9}{4}h_{3,1} \\
 \frac{9}{4}h_{2,2} - \frac{7}{4} & \frac{9}{4}h_{2,2} + \frac{5}{4}\\
 -\frac{9}{4}h_{3,1} - \frac{3}{4}h_{2,2} + \frac{9}{4}h_{4,1} + \frac{1}{4} &  -\frac{9}{4}h_{3,1} + \frac{9}{4}h_{2,2} + \frac{9}{4}h_{4,1} + \frac{1}{4} \end{array} \right).
\end{align*}
\end{footnotesize}

We observe that the choices $h_{3,1} = 0, h_{2,2} = -1, h_{5,0} = 0, h_{4,1} = 0$ make the multiplication matrices commute.  With these choices of the $\mathbf{h}$'s both $(\M_{y}^B)^t$ and $(\M_{z}^B)^t$ have two proper eigenvectors
\begin{align*}
    v_1 &= (  \sqrt{3}/6 - 1/4, \sqrt{3}/12 - 1/4, -\sqrt{3}/12, \sqrt{3}/6 ),\\
    v_2 &= ( - \sqrt{3}/6 - 1/4, - \sqrt{3}/12 - 1/4, \sqrt{3}/12, -\sqrt{3}/6 ),
\end{align*}
and two rank-2 eigenvectors
\begin{align*}
    v_3 &= ( 1/2 - \sqrt{3}/4, -\sqrt{3}/12, -\sqrt{3}/6, \sqrt{3}/6 ),\\
    v_4 &= ( 1/2 + \sqrt{3}/4, \sqrt{3}/12, \sqrt{3}/6, -\sqrt{3}/6 ).
\end{align*}

\noindent We can therefore write $F$ as

\begin{footnotesize}
\begin{align*}F =& \bigg[ (\sqrt{3}/6 - 1/4)x+ (\sqrt{3}/12 - 1/4)y+ (-\sqrt{3}/12)z \bigg]^2 \bigg[ \lambda_1\Big((\sqrt{3}/6 - 1/4)x \\
& \; \; \; \; \; \; +  (\sqrt{3}/12 - 1/4)y+ (-\sqrt{3}/12)z \Big) + \lambda_3 \Big((- \sqrt{3}/4)x+ (-\sqrt{3}/12)y+(-\sqrt{3}/6)z \Big) \bigg] \\
&+ \bigg[ (- \sqrt{3}/6 - 1/4)x+(- \sqrt{3}/12 - 1/4)y+(\sqrt{3}/12)z \bigg]^2 \bigg[ \lambda_2\bigg(
(- \sqrt{3}/6 - 1/4)x \\
&\; \; \; \; \; \;  + (- \sqrt{3}/12 - 1/4)y+(\sqrt{3}/12)z \Big) + \lambda_4 \Big((\sqrt{3}/4 + 1/2)x+(\sqrt{3}/12)y+(\sqrt{3}/6)z \Big) \bigg].
\end{align*}
\end{footnotesize}

\noindent In fact, the system has solution for
\begin{align*}
\lambda_1 &=-(16\sqrt{3}(26\sqrt{3} - 45))/((4\sqrt{3} - 7)^2(\sqrt{3} - 2)), \\
\lambda_2 &=(16(7\sqrt{3} - 12)(26\sqrt{3} - 45))/((4\sqrt{3} - 7)(\sqrt{3} - 2)), \\
\lambda_3 &=(48(41\sqrt{3} - 71))/((4\sqrt{3} - 7)^2(\sqrt{3} - 2)), \\
\lambda_4 &=-(16\sqrt{3}(26\sqrt{3} - 45)(\sqrt{3} - 1))/(4\sqrt{3} - 7).
\end{align*}
It is worth noting that the decomposition we found is different from the one we started with but the tangential rank is the same, namely $r = 4$. It is perfectly fine: in this case the decomposition is not unique and different choices of $\mathbf{h}$'s would have provided us with possibly different decompositions of $F$.
\end{exmp}

\section{The cactus rank}\label{section:cactus:rank}

\subsection{Evaluation of the cactus rank}

The \emph{cactus rank} of a homogeneous polynomial $F \in R^h_d$ is known to be the minimal length of an apolar zero-dimensional scheme to $F$. 
By \cite[Theorem 3.7]{bbm} the cactus rank of $F$ coincides with the size of a generalized decomposition of $f^*$ as mentioned in Section \ref{apolarity:section} and this is the definition that we use here. 

\begin{defn}\label{cactus:definition} Let $F \in R^h_d$. The \emph{cactus rank} of $F$ is the minimal $r \in \N$ such that there exists $\Lambda \in R^*$ extending $f^* \in R_{\leq d}^*$ with $I_{\Lambda}$ zero-dimensional ideal and $\dim_{\k} I_{\Lambda}^{\perp} = r$.
\end{defn}

Since $I_{\Lambda}^{\perp}$ and $\A_{\Lambda}^*$ are isomorphic as $\k$-vector spaces, we may use our algorithm to detect the first $r = \dim_{\k} \A_{\Lambda} = \dim_{\k} \A_{\Lambda}^*$ which allows to extend $f^*$ to a $\Lambda \in R^*$ such that $\rk H_{\Lambda} = r$. This is equivalent (by \cite[Theorem 6.2]{SymTensorDec}) to search for the minimal rank that the filled matrix $\H_{\Lambda}(\mathbf{h})$ may have in order to make the operators $(\M_i^B)^t$ commute. Once we have found these commuting operators we can immediately read the $\1_{\z_i}$'s appearing in a generalized decomposition of $f^*$ by Theorem \ref{Eigenvectors} as their common rank-1 eigenvectors.

Moreover, it is also possible to recover the multiplicities of these $\z_i$'s by making use of the following theorem.

\begin{thm}\label{Thm:CactusStructure}
Let $\Lambda \in R^*$ and $\Lambda = \sum_{i = 1}^d \1_{\z_i} \circ p_i(\d)$ be a generalized decomposition of $\Lambda$. Then for every $i \in \{1, \dots, d\}$ and every $\a \in \N^n$ the element
\begin{equation*}
    \1_{\z_i} \circ (\de^{\a} p_i)(\d) \in \A_{\Lambda}^*
\end{equation*}
is either the zero map or a generalized eigenvector common to every $M_{x_j}^t$ with eigenvalue $(\z_i)_j$.
Moreover, the chain of $\1_{\z_i} \circ (\de^{\a} p_i)(\d)$ relative to $M_{x_j}^t$ is
\begin{equation*}
    \{ \1_{\z_i} \circ (\de^{\a} p_i)(\d), \1_{\z_i} \circ (\de_j\de^{\a} p_i)(\d),\1_{\z_i} \circ (\de_j^2\de^{\a} p_i)(\d), \dots \},
\end{equation*}
until a proper rank-1 eigenvector is reached.
\end{thm}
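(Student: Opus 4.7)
The argument reduces to a single Leibniz-style identity. For any polynomial $q \in R$ and any $f \in R$, a routine monomial-by-monomial check gives
\begin{equation*}
q(\d)(x_j f) = x_j \cdot q(\d)(f) + (\de_j q)(\d)(f).
\end{equation*}
Composing with evaluation at $\z_i$ and using that the action of $x_j$ becomes multiplication by the scalar $(\z_i)_j$, this identity reads
\begin{equation*}
x_j \star \big(\1_{\z_i} \circ q(\d)\big) = (\z_i)_j \cdot \big(\1_{\z_i} \circ q(\d)\big) + \1_{\z_i} \circ (\de_j q)(\d),
\end{equation*}
or equivalently
\begin{equation*}
\big(M_{x_j}^t - (\z_i)_j \cdot \id\big)\big(\1_{\z_i} \circ q(\d)\big) = \1_{\z_i} \circ (\de_j q)(\d).
\end{equation*}

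Specializing $q = \de^{\a} p_i$ and using $\de_j \de^{\a} = \de^{\a + e_j}$, this is exactly the chain-shift claimed by the theorem. Before iterating, I would verify that each $\1_{\z_i} \circ (\de^{\a} p_i)(\d)$ descends to a well-defined element of $\A_{\Lambda}^*$, i.e. lies in $I_{\Lambda}^{\perp}$. The base case $\a = \0$ follows from Theorem \ref{Thm:Structure}, since $\1_{\z_i} \circ p_i(\d) \in Q_i^{\perp} \subseteq I_{\Lambda}^{\perp}$. The inductive step is immediate because $I_{\Lambda}^{\perp}$ is an $R$-submodule of $R^*$ under the $\star$-action, so whenever the left-hand side of the shift identity lies in $I_{\Lambda}^{\perp}$, the right-hand side does as well.

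Iterating the shift identity $k$ times produces
\begin{equation*}
\big(M_{x_j}^t - (\z_i)_j \cdot \id\big)^k \big(\1_{\z_i} \circ (\de^{\a} p_i)(\d)\big) = \1_{\z_i} \circ (\de_j^k \de^{\a} p_i)(\d).
\end{equation*}
Since $p_i$ has finite degree, $\de_j^k \de^{\a} p_i$ is eventually the zero polynomial and, a fortiori, the corresponding functional vanishes in $\A_{\Lambda}^*$. Hence $\1_{\z_i} \circ (\de^{\a} p_i)(\d)$ is either the zero map on $\A_{\Lambda}$ or a generalized eigenvector of $M_{x_j}^t$ with eigenvalue $(\z_i)_j$, with chain exactly the one in the statement. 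The chain terminates at the first index $k$ for which the next shift vanishes in $\A_{\Lambda}^*$, at which point the last entry lies in $\ker\big(M_{x_j}^t - (\z_i)_j \cdot \id\big)$ and is therefore a proper rank-$1$ eigenvector.

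The main obstacle is mostly bookkeeping: one must be careful that the generalized-eigenvector property is claimed \emph{jointly} for all $M_{x_j}^t$ (same $(\z_i)_j$-eigenvalue for the $j$-th operator) while the chain itself depends on the choice of $j$; and that termination is diagnosed by vanishing in $\A_{\Lambda}^*$ rather than in $R^*$, since the polynomial $\de_j^k \de^{\a} p_i$ may still be non-zero in $R$ while $\1_{\z_i} \circ (\de_j^k \de^{\a} p_i)(\d)$ already vanishes modulo $I_\Lambda$. Once these subtleties are kept straight, the whole proof reduces to the single Leibniz identity and its iteration.
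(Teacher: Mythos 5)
Your proof is correct and follows essentially the same route as the paper's: the identical Leibniz-rule identity $q(\d)(x_j f) = x_j\, q(\d)(f) + (\de_j q)(\d)(f)$, the same shift computation for $M_{x_j}^t - (\z_i)_j\,\id$, and the same iteration to obtain the chain. Your added remarks on well-definedness of $\1_{\z_i}\circ(\de^{\a}p_i)(\d)$ in $\A_{\Lambda}^*$ and on termination being detected modulo $I_{\Lambda}$ rather than in $R^*$ are sound clarifications of points the paper leaves implicit.
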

\proof For every polynomial $q \in R$ and indices $i \in \{1, \dots, d\}$ and $j \in \{1, \dots, n\}$, a repeated use of the chain rule for partial derivatives shows that
\begin{equation*}
    (\de^{\a} p_i) (q x_j) = \Big( (\de_j \de^{\a} p_i)(\d) + x_j (\de^{\a} p_i)(\d) \Big) (q).
\end{equation*}
Therefore, we have
\begin{equation*}
    M_{x_j}^t \big( \1_{\z_i} \circ (\de^{\a} p_i)(\d) \big) = \1_{\z_i} \circ (\de_j \de^{\a} p_i)(\d) + (\z_i)_j \1_{\z_i} \circ (\de^{\a} p_i)(\d),
\end{equation*}
which implies inductively on $n \in \N$ that
\begin{equation*}
    \Big( M_{x_j}^t - (\z_i)_j \Big)^n \big( \1_{\z_i} \circ (\de^{\a} p_i)(\d) \big) = \1_{\z_i} \circ (\de_j^n \de^{\a} p_i)(\d).
\end{equation*}
This proves that $\1_{\z_i} \circ (\de^{\a} p_i)(\d)$ is a generalized eigenvector with eigenvalue $(\z_i)_j$ of rank $\deg_{j} \de^{\a} p_i + 1$, since its chain is obtained by repeatedly differentiating the differential polynomial $\de^{\a}p_i$ with respect to the $j$-th variable.
\endproof

\begin{cor}\label{corMu}
Let $\Lambda \in R^*$ and $\Lambda = \sum_{i = 1}^d \1_{\z_i} \circ p_i(\d)$ be a generalized decomposition of $\Lambda$. Let $V^j[\mu]$ be the generalized eigenspace of $M_{x_j}^t$ relative to the eigenvalue $\mu$, then for every $i \in \{1, \dots, d\}$ the multiplicity of $\1_{\z_i}$ is given by
\begin{equation*}
    \textnormal{mult} \, \1_{\z_i} = \dim_{\k} \cap_{j = 1}^n V^j [(\z_i)_j].
\end{equation*}
\end{cor}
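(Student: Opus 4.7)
The plan is to prove that $\bigcap_{j=1}^n V^j[(\z_i)_j]$ coincides with the apolar piece $Q_i^{\perp}$ coming from the primary decomposition $I_{\Lambda} = Q_1 \cap \dots \cap Q_d$ of Theorem \ref{Thm:Structure}. Once this is done, the displayed formula right after Theorem \ref{Thm:Structure}, namely $\dim_{\k} Q_i^{\perp} = \dim_{\k} \langle \{\1_{\z_i} \circ \de^{\alpha} p_i\}_{|\alpha| \leq \deg p_i}\rangle_{\k}$, identifies this dimension with $\text{mult}\, \1_{\z_i}$ and yields the statement. Throughout I identify $\A_{\Lambda}^*$ with $I_{\Lambda}^{\perp} = \bigoplus_{k=1}^d Q_k^{\perp}$ in the natural way, so that $M_{x_j}^t$ acts on each summand separately.

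For the inclusion $Q_i^{\perp} \subseteq \bigcap_j V^j[(\z_i)_j]$ I would note that, by the displayed formula above, $Q_i^{\perp}$ is spanned by the elements $\{\1_{\z_i} \circ (\de^{\alpha} p_i)(\d)\}_{|\alpha| \leq \deg p_i}$. Theorem \ref{Thm:CactusStructure} asserts precisely that each such element is a generalized eigenvector common to all the $M_{x_j}^t$'s with eigenvalues $(\z_i)_j$, which gives the inclusion on a spanning set.

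For the reverse inclusion, I would take $v \in \bigcap_j V^j[(\z_i)_j]$ and decompose $v = v_1 + \dots + v_d$ according to the direct sum $I_{\Lambda}^{\perp} = \bigoplus_k Q_k^{\perp}$. Since $Q_k$ is an ideal, $Q_k^{\perp}$ is stable under each $M_{x_j}^t$, and by Theorem \ref{Thm:CactusStructure} every element of $Q_k^{\perp}$ is a generalized eigenvector of $M_{x_j}^t$ with eigenvalue $(\z_k)_j$; equivalently, $M_{x_j}^t - (\z_k)_j \id$ is nilpotent on $Q_k^{\perp}$. Consequently, for each $k$ with $(\z_k)_j \neq (\z_i)_j$, the restriction of $M_{x_j}^t - (\z_i)_j \id$ to $Q_k^{\perp}$ is invertible (scalar plus nilpotent with nonzero scalar). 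Applying a sufficiently high power of this operator to $v$ kills $v$ by hypothesis, while acting invertibly on $v_k \in Q_k^{\perp}$, which forces $v_k = 0$. Intersecting over $j$ and using that the $\z_k$'s are distinct (so for $k \neq i$ there is always some coordinate $j$ with $(\z_k)_j \neq (\z_i)_j$) we conclude $v_k = 0$ for all $k \neq i$, hence $v = v_i \in Q_i^{\perp}$.

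The main obstacle is this reverse inclusion: the intuitively natural direction $Q_i^{\perp} \subseteq \bigcap_j V^j[(\z_i)_j]$ is immediate from Theorem \ref{Thm:CactusStructure}, but to go back one must exploit both the invariance of each $Q_k^{\perp}$ and the precise generalized eigenvalue structure of $M_{x_j}^t|_{Q_k^{\perp}}$ to isolate the component at $\z_i$.
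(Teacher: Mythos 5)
Your proof is correct, and it takes a genuinely different route from the paper's. The paper proves only the inequality $\tn{mult}\,\1_{\z_i} \leq \dim_{\k} \cap_{j=1}^n V^j[(\z_i)_j]$ (via Theorem~\ref{Thm:CactusStructure}, matching your forward inclusion), then runs a global dimension-counting sandwich: summing over $i$ gives $r = \dim_{\k}\A_{\Lambda}^* \leq \sum_i \dim_{\k}\cap_j V^j[(\z_i)_j]$; noting that the subspaces $\cap_j V^j[(\z_i)_j]$ have pairwise trivial intersection (since distinct $\z_i$'s differ in some coordinate) turns the sum into the dimension of a direct sum, which is then bounded above by $\dim_{\k}\A_{\Lambda}^* = r$ because a sum of generalized eigenspaces cannot exceed the whole space. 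Equality is forced at every step. Your argument instead establishes the \emph{subspace equality} $\cap_{j=1}^n V^j[(\z_i)_j] = Q_i^{\perp}$ directly: the easy inclusion from Theorem~\ref{Thm:CactusStructure}, and the reverse by decomposing $v = \sum_k v_k$ along $\bigoplus_k Q_k^{\perp}$ and annihilating each $v_k$ with $k \neq i$ via a high power of $M_{x_j}^t - (\z_i)_j\,\id$, which is nilpotent-plus-nonzero-scalar (hence invertible) on $Q_k^{\perp}$ once $j$ is chosen so that $(\z_k)_j \neq (\z_i)_j$. Both proofs ultimately rest on the same fact that distinct $\z_k$'s differ in some coordinate, but you deploy it locally per $k$ rather than globally for a dimension count. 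Your version is a bit longer but yields a strictly stronger structural statement (an identification of the generalized eigenspace intersection with the primary apolar summand), whereas the paper settles for the dimension formula alone.
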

\proof Since $\textnormal{mult} \, \1_{\z_i} = \dim_{\k} \langle \{ \1_{\z_i} \circ (\de^{\a} p_i)(\d)\}_{\a \in \N^n} \rangle_{\k}$, by Theorem \ref{Thm:CactusStructure} we have
\begin{equation*}
    \textnormal{mult} \, \1_{\z_i} \leq \dim_{\k} \cap_{j = 1}^n V^j [(\z_i)_j].
\end{equation*}
However, we know that the sum of all the multiplicities, say $r$, is also the dimension of $\A_{\Lambda}^*$. 
Then we have
\begin{equation*}
    r = \sum_{i = 1}^d \textnormal{mult} \, \1_{\z_i} \leq \sum_{i = 1}^d \dim_{\k} \bigcap_{j = 1}^n V^j [(\z_i)_j].
\end{equation*}

We observe that the spaces $\{ \cap_{j = 1}^n V^j [(\z_i)_j] \}_{i \in \{1, \dots, d\}}$ have trivial intersection. In fact, if $v \neq 0$ is a non-zero vector such that $v \in V^j [(\z_{i})_j]$ but also $v \in V^j [(\z_{k})_j]$ then the eigenvalue of $v$ with respect to $M_{x_j}^t$ is $(\z_{i})_j = (\z_{k})_j$. This cannot happen for every $j$, since $\z_{i} \neq \z_{k}$ by the minimality of the generalized decomposition.

Therefore, we have
\begin{equation*}
    \sum_{i = 1}^d \dim_{\k} \bigcap_{j = 1}^n V^j [(\z_i)_j] = \dim_{\k} \bigoplus_{i = 1}^d \bigcap_{j = 1}^n V^j [(\z_i)_j] \leq \dim_{\k} \bigcap_{j = 1}^n \bigoplus_{i = 1}^d V^j [(\z_i)_j],
\end{equation*}
but the sum of some generalized eigespaces cannot exceed the entire space, hence
\begin{equation*}
    \dim_{\k} \bigcap_{j = 1}^n \bigoplus_{i = 1}^d V^j [(\z_i)_j] \leq \dim_{\k} \bigcap_{j = 1}^n \A_{\Lambda}^* = \dim_{\k} \A_{\Lambda}^* = r.
\end{equation*}
By collecting the above relations we conclude that they are all equalities, so in particular for every $i \in \{1, \dots, d\}$ we have $\textnormal{mult} \, \1_{\z_i} = \dim_{\k} \cap_{j = 1}^n V^j [(\z_i)_j]$ .
\endproof

The above Theorem \ref{Thm:CactusStructure} and Corollary \ref{corMu} could be also considered as consequences of \cite[Thm. 3.1 and Prop 3.4]{Mu} and \cite[Prop. 3.8]{Mu} (respectively) after having glued together those results and reinterpreted them in the context of symmetric tensors. Since our proofs are short, constructive and well contextualized in the language of the present paper we have preferred to show them instead of referring as consequences of \cite{Mu}, whose link may be not straightforward.

\begin{cor}
Let $F\in R^h_d$ and $\Lambda \in R^*$ be an extension of $f^*\in R^*_{\leq d}$ with generalized decomposition $\Lambda = \sum_{i = 1}^s \1_{\z_i} \circ p_i(\d)$.
For every $j \in \{1, \dots, n\}$ let also $\{V^j[(\z_i)_j]\}_{i \in \{1, \dots, s\}}$ be the generalized eigenspaces of $M_{x_j}^t$ on $\A_{\Lambda}^*$ relative to the eigenvalues $(\z_i)_j$'s.
Then we have
$$F=\sum_{i=1}^sL_i^{d-k_i+1}N_i$$ 
where $L_i = x_0 + (\z_i)_1x_1 + \cdots + (\z_i)_{n}x_n\in R^h_1$, each integer $k_i$ is at least the highest-rank of the common generalized eigenvectors $\cap_{j=1}^n V^j[(\z_i)_j]$ and $N_i \in R^h_{k_i - 1}$.
\end{cor}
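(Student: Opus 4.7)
The plan is to translate the generalized decomposition $\Lambda = \sum_{i=1}^s \1_{\z_i} \circ p_i(\d)$ directly into a polynomial decomposition of $f$ via the dual map $\tau$, and then to homogenize. Setting $l_i = 1 + (\z_i)_1 x_1 + \cdots + (\z_i)_n x_n \in R_{\leq 1}$, the key preliminary is the identity
\[
\tau\big(l_i^{d-k}\, N\big) = \tfrac{1}{d^{\underline{k}}}\, \1_{\z_i} \circ N(\d) \qquad \text{in } R_{\leq d}^*,
\]
valid for every homogeneous $N \in R$ of degree $k \leq d$, where $d^{\underline{k}} = d(d-1)\cdots(d-k+1)$. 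Its proof is a direct multinomial expansion of $l_i^{d-k}$ paired against the apolar product, using the formula $\de^{a}(\x^{\gamma}) = \tfrac{\gamma!}{(\gamma-a)!}\x^{\gamma-a}$, and it extends the ideas of Proposition \ref{Prop:DualLinearAffine} (case $k=0$) and of Proposition \ref{Prop:TangentialCase} (case $k=1$, $N$ homogeneous).

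Next, writing $p_i = \sum_{j=0}^{m_i} p_i^{(j)}$ as its homogeneous decomposition with $m_i = \deg p_i$, $\k$-linearity yields
\[
\1_{\z_i} \circ p_i(\d)\big|_{R_{\leq d}} = \tau\Big(\sum_{j=0}^{m_i} d^{\underline{j}}\, l_i^{d-j}\, p_i^{(j)}\Big).
\]
I would then identify $m_i$ with the stated rank bound via Theorem \ref{Thm:CactusStructure}: every common generalized eigenvector in $\cap_{j} V^j[(\z_i)_j]$ has the form $\1_{\z_i}\circ q(\d)$ with $q$ in the span of iterated partial derivatives of $p_i$, and its joint rank -- the smallest $\rho$ such that every $\rho$-fold product of the commuting nilpotent operators $M_{x_j}^t - (\z_i)_j$ kills it -- coincides with $\deg q + 1$. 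The maximum, attained at $q = p_i$, is therefore $m_i + 1$, so the hypothesis $k_i \geq m_i + 1$ is exactly the claimed bound.

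Since $k_i - 1 \geq m_i$, every exponent $d-j$ in the previous sum satisfies $d - j \geq d - k_i + 1$, and we may pull out the common factor $l_i^{d-k_i+1}$:
\[
\sum_{j=0}^{m_i} d^{\underline{j}}\, l_i^{d-j}\, p_i^{(j)} = l_i^{d-k_i+1}\, n_i, \qquad n_i := \sum_{j=0}^{m_i} d^{\underline{j}}\, l_i^{k_i-1-j}\, p_i^{(j)} \in R_{\leq k_i-1}.
\]
Summing over $i$ and using the injectivity of $\tau$ on $R_{\leq d}$ gives $f = \sum_{i=1}^s l_i^{d-k_i+1}\, n_i$, and homogenizing with respect to $x_0$ produces the desired $F = \sum_{i=1}^s L_i^{d-k_i+1} N_i$ with $L_i = x_0 + (\z_i)_1 x_1 + \cdots + (\z_i)_n x_n$ and $N_i \in R^h_{k_i-1}$. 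The main obstacle is the identification of the ``highest rank of common generalized eigenvectors'' with $\deg p_i + 1$: this requires specifying that ``rank'' here means the joint rank with respect to the commuting family $\{M_{x_j}^t\}_{j}$ rather than the individual $M_{x_j}^t$-Jordan ranks, after which everything reduces to the commutation identity $[q(\d), x_j] = (\de_j q)(\d)$ that also underlies Theorem \ref{Thm:CactusStructure}.
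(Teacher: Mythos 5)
Your proof is correct, and it follows a genuinely different route from the paper's. The paper argues through apolarity: it cites the Apolarity Lemma of Iarrobino--Kanev twice ([IK, Thm.\ 5.3.D] and [IK, Lemma 2.15]) to reduce the claim to the ideal containment $\mathfrak{m}_{\z_1}^{k_1}\cap\cdots\cap\mathfrak{m}_{\z_s}^{k_s}\subseteq I_{\Lambda}\subseteq \Ann(F)_{x_0=1}$, and checks the first inclusion by the dual inclusion $Q_i^{\perp}\subseteq(\mathfrak{m}_{\z_i}^{k_i})^{\perp}$ of inverse systems. You instead build the decomposition outright by computing a $\tau$-preimage of $\Lambda|_{R_{\leq d}}$. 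Your identity $\tau(l_i^{d-k}N)=\frac{1}{d^{\underline{k}}}\,\1_{\z_i}\circ N(\partial)$ on $R_{\leq d}^*$ for $N$ homogeneous of degree $k$ is correct (I checked it on monomials $N=\x^{\beta}$: both sides equal $\frac{(d-k)!}{d!}\sum_{|\a|\le d-k}\frac{(\a+\b)!}{\a!}\z_i^{\a}f_{\a+\b}$), and it is the natural common generalization of Propositions \ref{Prop:DualLinearAffine} ($k=0$) and \ref{Prop:TangentialCase} ($k=1$). Splitting $p_i$ into homogeneous pieces, pulling out $l_i^{d-k_i+1}$ and homogenizing finishes the job. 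Your argument is more elementary (no appeal to [IK]) and more constructive: it exhibits the $N_i$'s explicitly, while the paper only establishes their existence.

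One small caveat on the rank identification. You assert that the highest rank of the common generalized eigenvectors equals $\deg p_i + 1$, reading ``rank'' as the joint rank for the commuting family. The paper's usage (and Algorithm \ref{alg:cactus}, step \ref{CactusLastStep}) appears to be the maximum over $j$ of the individual $M_{x_j}^t$-Jordan ranks, which for $\1_{\z_i}\circ q(\d)$ is $\max_j\deg_j q+1$; this can be strictly smaller than $\deg q+1$ (e.g. $p_i = yz$ gives individual ranks $2$, $2$ but $\deg p_i+1 = 3$ --- exactly the phenomenon in Example \ref{Ex:UpRank2}, where $\overline{k}=2$ fails and $k=3$ is needed). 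In fact the paper's own proof only asserts the inequality $m_i\le \deg p_i + 1$. This does not harm your proof, since you produce a decomposition with $k_i=\deg p_i+1$, and in either reading the highest rank is bounded above by $\deg p_i + 1$, so the corollary's condition ``$k_i$ at least the highest rank'' is satisfied; but the claimed equality should be dropped or weakened to the inequality.
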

\proof Let $\mathfrak{m}_{h,\z_i} \se \k[x_0, \dots, x_n]$ be the maximal homogeneous relevant ideals generated by $(\z_j x_0 - x_j)_{j \in \{1, \dots, n\}}$.
By \cite[Thm. 5.3. D.]{IK} a homogeneous polynomial $F \in R_d^h$ can be written as above if and only if the ideal $I_Z = \mathfrak{m}_{h,\z_1}^{k_1} \cap \cdots \cap \mathfrak{m}_{h,\z_s}^{k_s}$ annihilates $F$ , i.e. $I_Z \se \Ann(F) = \{g \in \k[x_0, \dots, x_n] \, | \, g(\d)(f) = 0 \}$.
In our setting it is sufficient to prove it in the affine chart defined by $x_0 = 1$, in fact we show that
\begin{equation*}
    \mathfrak{m}_{\z_1}^{k_1} \cap \cdots \cap \mathfrak{m}_{\z_s}^{k_s} \se I_{\Lambda} \se \Ann(F)_{x_0 = 1}.
\end{equation*}

As for the first inclusion we have that $I_{\Lambda} = Q_1 \cap \dots \cap Q_s$, then it is sufficient to show that for every $i \in \{1, \dots, s\}$ one has $\mathfrak{m}_{\z_i}^{k_i} \se Q_i$ or, equivalently, that $(Q_i)^{\perp} \se (\mathfrak{m}_{\z_i}^{k_i})^{\perp}$.
We know that $(Q_i)^{\perp} = \langle \{\1_{\z_i} \circ \de^{\a} p_i(\d)\}_{\a \in \N^n} \rangle_{\k}$, so by an easy application of the chain rule, every generator of $(Q_i)^{\perp}$ vanishes on $\m_i^{k_i}$, for all $k_i \geq \deg p_i+1$. 
By Theorem \ref{Thm:CactusStructure} 
the highest-rank $m_i$ of the common generalized eigenvectors $\cap_{j=1}^n V^j[(\z_i)_j]$ is such that $m_i\leq\deg p_i +1$. 
Hence there exists $k_i\geq m_i$ such that $(Q_i)^{\perp} \se (\mathfrak{m}_{\z_i}^{k_i})^{\perp}$.

To prove the second inclusion we notice that, by \cite[Lemma 2.15]{IK}, asking a homogeneous relevant ideal $I$ to be contained in $\Ann(F)$ is equivalent to require $F$ annihilating $I_d$, i.e. the degree $d$ part of $I$. In the affine setting, this translates into $f^* \in \big((I_{\Lambda})_{\leq d}\big)^\perp$, which is true by definition since for every $h \in I_{\Lambda} \cap R_{\leq d}$ we have
\begin{equation*}
    \langle f, h \rangle = f^*(h) = \Lambda(h) = h \star \Lambda(1) = 0.
\end{equation*}

Hence, we conclude that $I_Z \se \Ann(F)$ from which the required decomposition follows.
\endproof

The above discussion leads us to the following algorithm.
\vspace{0.5cm}
\begin{mdframed}[]
\begin{alg}[Cactus rank and decomposition]\label{alg:cactus}\end{alg}
\vspace{0.1cm}
\noindent\textbf{Input:} A degree $d \geq 2$ polynomial $F \in R^h_d$ written by using a general set of essential variables. \\
    \textbf{Output:} The cactus rank of $F$, the $\zeta_1, \ldots ,\zeta_s\in \k^n$ appearing in a generalized decomposition of $\Lambda$ extending $f^*$, their multiplicities and a cactus decomposition of $F$.
\begin{enumerate}
\item Construct the matrix $\H_{\Lambda}(\mathbf{h})$ with the parameters $\mathbf{h} = \{h_{\a}\}_{\substack{\a \in \N^n\\\hspace{-0.1cm}|\a| > d}}$.
\item  Set $r := \rk \rH_{f^*}$.
\item \label{RankLoop} For $B \in \B_{r}$ and $|B|=r$ do
\begin{itemize}
\item Find parameters $\mathbf{h}$ such that:\\
- $\det \H_{\Lambda}^B \neq 0$.\\
- The operators $(\M_i^B)^t := \H^B_{x_i \star \Lambda}(\H^B_{\Lambda})^{-1}$ commute.
\item If one finds such parameters then go to step \ref{CactusLastStep}.
\end{itemize}
\item \label{TestNextr} Set $r := r+1$ and restart step \ref{RankLoop}.
\item \label{CactusLastStep} - Compute the common eigenvectors $v_1, \ldots , v_s$ of the $(\M_j^B)^t$'s, define \\ $\{\z_i = \big(\frac{(v_i)_2}{(v_i)_1}, \dots, \frac{(v_i)_{n+1}}{(v_i)_1}\big)\}_{i \in \{1, \dots, s\}}$ and compute the generalized \\ eigenspaces $\{V^j[(\z_i)_j]\}_{i \in \{1, \dots, s\}}$ of $(\M_{j}^B)^t$ relative to $(\z_i)_j$'s.\\
- Define $r_i = \dim_{\k} \cap_{j = 1}^n V^j [(\z_i)_j]$, let $\overline{k_i}$ be the highest-rank among the common generalized eigenvectors $\cap_{j=1}^nV^j[(\z_i)_j]$ and define $\{L_i = (v_i)_1x_0 + \dots + (v_i)_{n+1}x_n\}_{i \in \{1, \dots, s\}}$.\\
- Find the minimal $k_i \geq \overline{k_i}$ such that the linear system
$$F=\sum_{i=1}^s L_i^{d-k_i+1}\left(\sum_{|\alpha|=k_i-1} \lambda_{\alpha}\mathbf{x}^{\alpha}\right)$$
is solvable in the variables $\lambda_{\alpha} \in \k$.
Then return:
\begin{itemize}
    \item The cactus rank of $F$: $r=\sum_{i=1}^{s} r_i$.
    \item The points on which a generalized decomposition is supported: $\{\z_i\}_{i \in \{1, \dots, s\}}$.
    \item The correspondent multiplicities of these points: $\{r_i\}_{i \in \{1, \dots, s\}}$.
    \item A cactus decomposition of $F$: $\sum_{i=1}^s L_i^{d-k_i+1}\left(\sum_{|\alpha|=k_i-1} \lambda_{\alpha}\mathbf{x}^{\alpha}\right)$.
\end{itemize} 
\end{enumerate}
\end{mdframed}
\vspace{0.5cm}
As for the decomposition algorithm, we start testing $r$ from the rank of the maximal numerical submatrix of $\H_{\Lambda}(\mathbf{h})$ on. That is because the cactus rank never falls behind this value, as shown in \cite[Corollary 3.3]{bbm}.

Notice that if the $k_i$'s appearing in the exponent of the linear forms in the cactus decomposition may be bigger than the highest-rank among the common generalized eigenvectors $\cap_{j=1}^nV^j[(\z_i)_j]$ (e.g. Example \ref{Ex:UpRank2} of next section).
To find minimal $k_i$'s of step \eqref{CactusLastStep} one might gradually increase their values or begin with a high value of $k_i$'s and factorize the form of degree $k_i-1$ multiplying $L^{d-k_i+1}$ afterwards.

Finally, the same idea of Remark \ref{rmk:RandomCombination} may be applied also for finding common generalized eigenvectors, as in the following lemma.

\begin{lemma}\label{lemma:Combination} Let $\{M_i\}_{i \in \{1, \dots, n\}}$ be commuting linear operators on a vector space $V$ over a field $\k$. Let also $v \in V$ be a generalized rank $r_i \geq 1$ eigenvector for every $M_i$, whose correspondent eigenvalue is $\lambda_i \in \k$. Then for every $\{\gamma_1, \dots, \gamma_n\} \se \k$ there is an integer $r \leq r_1 + \dots + r_n - n + 1$ such that $v$ is a generalized rank $r$ eigenvector of $\sum_{i = 1}^n \gamma_i M_i$, relative to the eigenvalue $\sum_{i = 1}^n \gamma_i \lambda_i$.
\end{lemma}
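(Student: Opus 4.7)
The natural plan is to translate the rank condition into a condition on the nilpotent shifts $N_i = M_i - \lambda_i \mathrm{Id}$ and then expand a power of the combined shift by the multinomial theorem, which is legitimate precisely because the $M_i$ commute.

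First I would set $M = \sum_{i=1}^n \gamma_i M_i$ and $\lambda = \sum_{i=1}^n \gamma_i \lambda_i$, and note
\[
 M - \lambda \mathrm{Id} = \sum_{i=1}^n \gamma_i (M_i - \lambda_i \mathrm{Id}) = \sum_{i=1}^n \gamma_i N_i.
\]
The operators $N_i$ commute because the $M_i$ do, and by the definition of generalized eigenvector of rank $r_i$ we have $N_i^{r_i}(v) = 0$ for every $i$. Since generalized rank being at most $r$ for $M$ with respect to $\lambda$ is precisely the statement $(M-\lambda\mathrm{Id})^r(v) = 0$, it suffices to exhibit one such $r$ with $r \le r_1+\cdots+r_n - n + 1$.

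The key step is the multinomial expansion, which is valid due to commutativity:
\[
 (M-\lambda \mathrm{Id})^r(v) = \sum_{|\alpha|=r} \binom{r}{\alpha}\, \gamma^{\alpha}\, N_1^{\alpha_1}\cdots N_n^{\alpha_n}(v).
\]
If some coordinate of $\alpha$ satisfies $\alpha_i \ge r_i$, then $N_i^{\alpha_i}(v)=0$ and (by commutativity) the entire term vanishes. The only multi-indices that can contribute are those with $\alpha_i \le r_i - 1$ for every $i$, and these satisfy $|\alpha| \le \sum_{i=1}^n (r_i - 1) = r_1+\cdots+r_n - n$.

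Choosing $r = r_1+\cdots+r_n - n + 1$ therefore forces every term in the expansion to vanish, so $(M-\lambda\mathrm{Id})^r(v) = 0$, and the minimal such exponent is at most this bound. I do not foresee a genuine obstacle: the only thing to be careful about is to phrase rank $r_i$ correctly (so that $N_i^{r_i}(v)=0$, independently of whether $v$ happens to be a proper eigenvector of other operators, which is already covered by $r_i \ge 1$). The bound is sharp in general, as can be seen by taking $M_i$ to be independent Jordan blocks.
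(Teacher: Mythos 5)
Your proof is correct and is essentially the same as the paper's: both pass to the commuting nilpotent shifts $N_i = M_i - \lambda_i\,\mathrm{Id}$, expand $\bigl(\sum_i \gamma_i N_i\bigr)^{r_1+\cdots+r_n-n+1}$ by the multinomial theorem, and observe (by a pigeonhole count on the multi-indices) that every term contains some $N_i^{\alpha_i}$ with $\alpha_i \geq r_i$ and hence kills $v$. The only difference is cosmetic: you phrase the counting as ``surviving indices satisfy $|\alpha| \leq \sum (r_i-1)$'' while the paper invokes the pigeonhole principle directly on indices of total degree $\sum r_i - n + 1$.
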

\proof By hypothesis for every $i$ we have
\begin{equation*}
    (M_i - \lambda_i \1)^{r_i} v = 0,
\end{equation*}
and we prove that 
\begin{equation*}
    \left( \sum_{i = 1}^n \gamma_i M_i - \sum_{i = 1}^n \gamma_i \lambda_i \1 \right)^{\sum_{i = 1}^n r_i - n + 1} v = 0.
\end{equation*}
Since the operators $M_i$ commute, also $M_i - \lambda_i \1$ do. Then
\begin{equation*}
    \left( \sum_{i = 1}^n \gamma_i (M_i - \lambda_i \1) \right)^{\sum_{i = 1}^n r_i - n + 1} = \sum_{|\a| = \sum_{i = 1}^n r_i - n + 1} {|\a| \choose \a} \prod_{i = 1}^n \gamma_i^{\a_i}(M_i - \lambda_i \1)^{\a_i}.
\end{equation*}
By Pigeonhole principle we have $\a_i \geq r_i$ for at least one index $i$ in each piece of the above sum, hence it vanishes on $v$.
\endproof 

\subsection{Some examples}\label{Sec:CactusExamples}
Here we perform some examples of the cactus algorithm.

The first shows how the algorithm deals with irreducible non-linear components, providing us with the rank of well-studied forms with ease.

The second highlights that intersecting the generalized eigenspaces is sometimes essential: the information about multiplicities can not always be recovered by knowing only the eigenvalues multiplicities, although these cases almost never occur after a general change of variables.

The last two examples show how the Jordan forms of the multiplication matrices changes according to the cactus structure of the input polynomials.

\begin{exmp}
Let us consider $F = (x^2+y^2+6xz-8z^2)(4x-y-5z) \in \Cx[x,y,z]$, representing a conic with a tangent line. It is well-known that its Waring rank is $5$ and its cactus rank is $3$ \cite{bgi,lt}. We check the cactus rank via our algorithm, which also shows that such a generalized decomposition is unique. 
We dehomogenize $F$ by $x = 1$ and construct $\H_{\Lambda}(\mathbf{h})$. We start from $r = 3$ and observe that the multiplication matrices
\begin{equation*}
(\M_y^B)^t = \left( \begin{array}{cccc}
       0  &     1   &     0\\
   \frac{9}{128} & -\frac{81}{128} &  \frac{17}{128}\\
-\frac{23}{128} &-\frac{177}{128} & -\frac{15}{128}\end{array}\right), \ \
(\M_z^B)^t = \left( \begin{array}{ccc}
      0   &     0    &    1\\
 -\frac{23}{128} & -\frac{177}{128} & -\frac{15}{128}\\
-\frac{183}{128} & -\frac{17}{128} & -\frac{303}{128}\end{array}\right)
\end{equation*}
commute. This is sufficient to conclude that the cactus rank of $F$ is $3$.

Besides, the unique eigenvector common to $(\M_y^B)^t$ and $(\M_z^B)^t$ is $(   4, -1, -5)$ which lies in a generalized eigenspace of dimension $3$ for both the multiplication operators, hence a generalized decomposition of $\Lambda$ is supported on $\z_1 = (-\frac{1}{4}, -\frac{5}{4})$ and its correspondent multiplicity is $r_1 = 3$.
The maximum rank among the common generalized eigenvectors is $k_1 = 3$, hence we conclude $F = L_1^{3-3+1}N_1$, where $N_1 \in \Cx[x,y,z]$ is an homogeneous form of degree $2$ that can be recovered by solving

\begin{small}\begin{equation*}
    F = (4x - y - 5z)(\lambda_{(2,0,0)}x^2 + \lambda_{(0,2,0)}y^2 + \lambda_{(0,0,2)}z^2 + \lambda_{(1,1,0)}xy + \lambda_{(1,0,1)}xz + \lambda_{(0,1,1)}yz).
\end{equation*}
\end{small}

Finally, since no variables $\mathbf{h}$'s had to be chosen, such a cactus decomposition of $F$ is unique and it coincides with the expression of $F$ we started with.
\end{exmp}
\begin{exmp}
Let $F = (x+z)^5x + (x+y-z)^5x + (x+y+z)^6 + (x-z)^6 \in \Cx[x,y,z]$, dehomogenize it by $x = 1$ and construct $\H_{\Lambda}(\mathbf{h})$. The starting $r$ prescribed by the algorithm is $r = 6$, in fact the choice $B = \{1,y,z,y^2,yz,z^2\}$ leads to commuting $(\M_y^B)^t$ and $(\M_z^B)^t$, then the cactus rank of $F$ is 6.
The common eigenvectors are
\begin{align*}
    v_1 &= (1, 0, 1, 0, 0, 1), \quad v_2 = ( 1,  1, -1,  1, -1,  1), \\
    v_3 &= (1, 1, 1, 1, 1, 1), \quad v_4 = ( 1,  0, -1,  0,  0,  1),
\end{align*}
which reveal the points $\z_1 = (0,1),\ \z_2 = (1,-1),\ \z_3 = (1,1),\ \z_4 = (0,-1)$.
The generalized eigenspaces of $(\M_y^B)^t$ are

{\small \begin{equation*}
    V^y[0] = \left\langle
    \left( \begin{array}{c}1 \\ 0 \\ 0 \\ 0 \\ 0 \\ 0 \end{array}\right)
    \left( \begin{array}{c}0 \\ 0 \\ 1 \\ 0 \\ 0 \\ 0\end{array}\right)
    \left( \begin{array}{c}0 \\ 0 \\ 0 \\ 0 \\ 0 \\ 1 \end{array}\right)
    \right\rangle, \quad
    V^y[1] = \left\langle
    \left( \begin{array}{c}1 \\ 1 \\ 0 \\ 1 \\ 0 \\ 1 \end{array}\right)
    \left( \begin{array}{c}0 \\ 0 \\ 1 \\ 0 \\ 1 \\ 0\end{array}\right)
    \left( \begin{array}{c}0 \\ 1 \\ 0 \\ 2 \\ -1 \\ 2 \end{array}\right)
    \right\rangle, 
\end{equation*} }

\noindent whereas the generalized eigenspaces of $(\M_z^B)^t$ are

{ \small \begin{equation*}
    V^z[1] = \left\langle
    \left( \begin{array}{c} 1 \\ 0 \\ 0 \\ 0 \\ 0 \\ -1 \end{array}\right)
    \left( \begin{array}{c}0 \\ 1 \\ 0 \\ 1 \\ 1 \\ 0 \end{array}\right)
    \left( \begin{array}{c}0 \\ 0 \\ 1 \\ 0 \\ 0 \\ 2 \end{array}\right)
    \right\rangle,
    V^z[-1] = \left\langle
    \left( \begin{array}{c}1 \\ 0 \\ 0 \\ -1 \\ 1 \\ -1 \end{array}\right)
    \left( \begin{array}{c}0 \\ 1 \\ 0 \\ 1 \\ -1 \\ 0 \end{array}\right)
    \left( \begin{array}{c}0 \\ 0 \\ 1 \\ -1 \\ 1 \\ -2 \end{array}\right)
    \right\rangle.
\end{equation*} }

Hence, we conclude
\begin{align*}
    r_1 &= \dim_{\k} V^y[0] \cap V^z[1] = 2, \quad r_2 = \dim_{\k} V^y[1] \cap V^z[-1] = 2, \\
    r_3 &= \dim_{\k} V^y[1] \cap V^z[1] = 1, \quad r_4 = \dim_{\k} V^y[0] \cap V^z[-1] = 1.
\end{align*}
We notice that the same result follows even faster by applying Lemma \ref{lemma:Combination}: we consider a random linear combination $\M = \a (\M_y^B)^t + \b (\M_z^B)^t$, then $v_3,v_4$ are simple eigenvectors of $\M$ while $v_1,v_2$ correspond to eigenvalues with algebraic multiplicity $2$, from which the points multiplicities follow immediately.

The above discussion also shows that $k_1 = k_2 = 2$ and $k_3 = k_4 = 1$, then in a cactus decomposition of $F$ the linear forms corresponding to $\z_1$ and $\z_2$ will appear with exponents $6-2+1 = 5$, whereas those coming from $\z_3$ and $\z_4$ will have exponents $6-1+1 = 6$. The missing pieces of the decomposition may be equivalently recovered by solving the final linear system or by looking at generalized eigenvectors, as shown in Section \ref{Section:Tangential}.
\end{exmp}

\begin{exmp}\label{Ex:UpRank3}
Let us apply Algorithm \ref{alg:cactus} on the polynomial
\begin{equation*}
    F = (x+y+z)^2(x^2+y^2+6xz-8z^2) \in \Cx[x,y,z].
\end{equation*}
By choosing $B = \{1,y,z,y^2\}$ there is only one way to fill variables in order to have commuting multiplication operators. The unique common rank-1 eigenvector is (1,1,1,1), which corresponds to the linear form $x+y+z$.

For both the operators the rank-2 eigenspace relative to the eigenvalue $1$ is given by $\langle ( 1,  0,  0, -1), ( 0,  1,  0,  2), ( 0,  0,  1,  0)\rangle$, while the rank-3 eigenspace is the whole space.
Hence, the highest-rank among the generalized eigenvectors is $3$, which in facts leads to the decomposition 
\begin{equation*}
    F = (x+y+z)^{4-3+1}Q,
\end{equation*}
for $Q$ an irreducible quadric that may be computed by solving a linear system.
\end{exmp}

\begin{exmp}\label{Ex:UpRank2}
Here we apply Algorithm \ref{alg:cactus} on the polynomial
\begin{equation*}
    F = (x+y+z)^2(x+y)(x+z) \in \Cx[x,y,z].
\end{equation*}
By choosing $B = \{1,y,z,yz\}$ there is only one way to fill variables in order to have commuting multiplication operators. The unique common rank-1 eigenvector is (1,1,1,1), which corresponds to the linear form $x+y+z$.

For both the operators the rank-2 eigenspace relative to the eigenvalue $1$ is the whole space,
so the highest-rank among the generalized eigenvectors is $\overline{k} = 2$. However, the system 
\begin{equation*}
    F = (x+y+z)^{4-2+1}L
\end{equation*}
for a linear form $L$ is not solvable. Therefore, we consider $k=3$ and see that
\begin{equation*}
    F = (x+y+z)^{4-3+1}Q
\end{equation*}
is solvable for a reducible quadric $Q$.
\end{exmp}

\section{Conclusions and further work}\label{conclusions}
\subsection{How to choose the $\mathbf{h}$'s}
An inheritance that all the presented algorithms takes from \cite{SymTensorDec} is the choice of the variables $\mathbf{h}$'s in the Hankel matrix. This is the crucial and most computationally expensive part of the proposed algorithms, hence the most obvious part to be better understood in order to considerably speed them up. 
These variables are of fundamental importance, since they represent the actual novelty introduced in \cite{SymTensorDec}. In fact, if the Waring decomposition of the given polynomial can be computed without using any $\mathbf{h}$'s, then our algorithm is nothing else than a computational way of realizing the classical Iarrobino-Kanev idea (cf. \cite[5.4]{IK} also rephrased in \cite[Algorithm 2]{oo}).
 
\smallskip

This choice of values for the variables may occur only in Step \ref{MainLoop} of all the algorithms.
Firstly, when we search for parameters $\mathbf{h}$'s such that $\det \H_{\Lambda}^B \neq 0$. Asking this determinant to be non-zero is an open condition (in the Zariski topology), call it $C_1^0$.
The second time in which another choice of $\mathbf{h}$'s has to be done is when we ask the operators $(\M_i^B)^t = \H^B_{x_i \star \Lambda}(\H^B_{\Lambda})^{-1}$ to commute. This is a closed condition, call it $C_2$, which assures that $ \H_{x_i \star \Lambda}^B (\H_{\Lambda}^B)^{-1}
$ is actually the matrix of the multiplication-by-$x_i$ operator $M_{x_i}^t$ as seen in Lemma  \ref{MultiplicationHankel}.
We conjecture that $C_1^0$ and $C_2$ are somehow independent, meaning that first one does not affect the (im)possibility of the second, but this would certainly deserve further investigation.

Consider all the $\mathbf{h}$'s that one can find asking that both $C_1^0$ and $ C_2$ are satisfied. Each of them leads to a different $\Lambda$ which extends $f^*$. In the case of the cactus algorithm we know a priori that once conditions $C_1^0$ and $ C_2$ are both satisfied then we will get cactus rank.
This implies that if we are interested in computing either the cactus rank or a cactus decomposition any choices of $\mathbf{h}$'s satisfying $C_1^0 \cap C_2$ will make the algorithm terminate.

\smallskip

Instead, if we are looking either for a Waring decomposition or for a tangential one, $C_1^0 \cap C_2$ is not sufficient to know in advance which of these $\Lambda$'s will give rise to the correct number of eigenvectors common to all $(\mathbb{M}_i)^t$'s. 
The closed condition $C_2$ defines an ideal $I_{C_2}$ which itself defines a variety $\VV(I_{C_2})$, which may have many irreducible components: ${\VV(I_{C_2})}= V_1 \cup \cdots \cup V_k$.
The points in every $V_i$ correspond to $\Lambda$'s extending $f^*$. Each of these $\Lambda$'s gives rise to an ideal $I_{\Lambda}=\ker H_\Lambda$ (Definition \ref{def:Hankel}) defining a scheme of length $r=\dim(\A_{\Lambda})$, which is associated to a non empty $\VV(I_\Lambda)$ as we have seen in Section \ref{section:cactus:rank}.

Thus, a random choice of $\mathbf{h}$'s corresponds to pick  a random point in a component $V_i$ with the highest dimension. For this reason, 
in the unlucky case that a random choice of the $\mathbf{h}$'s does not lead to a required decomposition, then it is not sufficient to check other randomly chosen values of $\mathbf{h}$'s but a different component of $\VV(I_{C_2})$ has to be considered. 
After all these components had been tested we can conclude that $r$ has to be increased in order to find a required decomposition 
since in \cite[Theorem 3.16]{abm} it is proved that $C_2$ are quadratic equations for the punctual Hilbert scheme of points in $\mathbb{P}^n$ of length $r$.

This procedure works in general since the smooth elements in the Hilbert scheme of points of length $r$ are generic in their connected component. Therefore this procedure  provides an effective way to find a good choice of the $\mathbf{h}$'s and to decide that this cannot be achieved with the considered rank.

This is an actual proof-of-work of the proposed algorithms.

\subsection{Choice of the basis} 
In Section \ref{requirementB} we discussed the improvements obtained by choosing complete staircase bases $B$ for $\A_{\Lambda}$ instead of bases which are only connected to 1.
However, one can conceivably think to stricter criteria on these bases in order to reduce the number of tests performed by the algorithms even further. 
As an instance, Borel-fixed monomial bases \cite[Chapter 2]{BorelFixed} appear to be interesting candidates for this scope. 
A concrete example of such an improvement regards the high-degree monomials: we proved in Theorem \ref{CompleteStaircase} that for finding a Waring decomposition of a degree $d$ polynomial, the bases $B$ can be chosen involving only monomials of degree at most $d$, but we suspect that degree $d-1$ suffices. From a computational point of view, this would drastically reduce the number of $\mathbf{h}$'s to be chosen by decreasing the average degree of the elements in $B$.
Besides, this would also have theoretical implications, leading to another short proof of the well-known bound on the maximum rank of generic polynomials, i.e. ${n+d\choose d}-n$ (cf. \cite{ger,lt}). One may hope to improve this bound for certain families of polynomials by refining the algebraic constraints on their bases even more.

Another challenging open problem about these bases regards the degrees of their elements for both tangential and cactus decomposition, as we outlined in Remark \ref{Rmk:degreeofB}. We have proved that such bases may always be made of elements of degree up to $\deg F$ only in the Waring case, but we lack a proof or a counterexample of requiring a similar condition even in the tangential setting, even more so in the cactus one.

\subsection{Cactus decomposition}
In Section \ref{section:cactus:rank} we presented an algorithm that computes the cactus rank and returns each $\1_{\z_i}$ together with the multiplicities of the $\z_i$'s. 
This algorithm terminates by solving the linear system
$$ F=\sum_{i=1}^s L_i^{d-{k_i}+1}\left(\sum_{|\alpha|=k_i-1} \lambda_{\alpha}\mathbf{x}^{\alpha}\right),$$
in the (possibly many) unknowns $\lambda_{\alpha}$'s.
It would be compelling to recover a priori more information on the $k_i$'s and on the actual polynomials $\sum_{|\alpha|=k_i-1} \lambda_{\alpha}\mathbf{x}^{\alpha}$ by recovering the $p_i$'s appearing in the generalized decomposition of $f^*$: it would decrease the number of parameters involved in the above linear system. From \cite{bjmr} the forms multiplying each $L_i^{d-k_i+1}$ define the local Gorenstein scheme ``ackwardly'' defined via dehomogenization by $L_i$. We strongly believe that this fact will lead to a deeper understanding on the reconstruction of the cactus decomposition.

Besides, as suggested by Examples \ref{Ex:UpRank3} and \ref{Ex:UpRank2}, the Jordan forms of the multiplication operators seem to change according to the algebraic properties (e.g. reducibility) of these forms. It would be awesome to exploit this knowledge in order to recover more information about the cactus structure of the considered polynomial in advance.

\section*{Acknowledgements}

We warmly thank A. Iarrobino and B. Mourrain for many useful conversations and suggestions.  We also thank the anonymous reviewer for his/her careful reading and insightful comments.

AB acknowledges financial support from GNSAGA of INDAM.


\begin{thebibliography}{10}

\bibitem{av} H.	Abo, H. and  N. Vannieuwenhoven, 
\emph{Most secant varieties of tangential varieties to Veronese varieties are nondefective},
Trans. Amer. Math. Soc. 370, pp. 393--420, 2018.

\bibitem{ar} E. S. Allman and J. A. Rhodes,
\emph{Phylogenetic ideals and varieties for the general Markov model},
Adv. in Appl. Math. 40, pp. 127--148, 2008.

\bibitem{abm} M.E. Alonso, J. Brachat and B. Mourrain,
\emph{The Hilbert scheme of points and its link with border basis},
Preprint: \url{arXiv:0911.3503}.

\bibitem{ABMM} E. Arrondo, A. Bernardi, P.M. Marques and B. Mourrain, \emph{Skew-symmetric tensor decomposition},
in press: Commun. Contemp. Math., 2019.

\bibitem{Atiyah} M. Atiyah and I. MacDonald,
\emph{Introduction To Commutative Algebra},
Avalon Publishing, 1994.

\bibitem{bb1} E. Ballico and A. Bernardi,
\emph{Decomposition of homogeneous polynomials with low rank},
Math. Z. 271, pp. 1141--1149, 2012. 

\bibitem{bb2} E. Ballico and A. Bernardi,
\emph{Stratification of the fourth secant variety of Veronese varieties via the symmetric rank},
Adv. Pure Appl. Math. 4, pp. 215--250, 2013.

\bibitem{bb3} E. Ballico and A. Bernardi,
\emph{Tensor ranks on tangent developable of Segre varieties},
Linear Multilinear Algebra 61, pp. 881--894, 2013.

\bibitem{Bertini} D.J. Bates, J.D. Hauenstein, A.J. Sommese and C.W. Wampler,
\emph{Bertini: software for numerical algebraic geometry},
available at \url{bertini.nd.edu}.

\bibitem{IAB} A. Bernardi,
\emph{Ideals of varieties parameterized by certain symmetric tensors},
J. Pure Appl. Algebra 212, pp. 1542--1559, 2008.

\bibitem{BBCM0} A. Bernardi, J. Brachat, P. Comon and B. Mourrain, \emph{Multihomogeneous polynomial decomposition using moment matrices},
Proceedings of the International Symposium on Symbolic and Algebraic Computation (ISSAC), pp. 35--42, 2011. 

\bibitem{BBCM} A. Bernardi, J. Brachat, P. Comon and B. Mourrain,
\emph{General tensor decomposition, moment matrices and applications}, 
J. Symbolic Comput. 52, pp. 51--71, 2013. 

\bibitem{bbm} A. Bernardi, J. Brachat and B. Mourrain, 
\emph{A comparison of different notions of ranks of symmetric tensors},
Linear Algebra Appl. 460, pp. 205--230, 2014.

\bibitem{bcgi1} A.	Bernardi, M.V. Catalisano, A. Gimigliano and M. Id\`a,
\emph{Osculating varieties of Veronese varieties and their higher secant varieties}, Canad. J. Math. 59, pp. 488--502, 2007.

\bibitem{bcgi2} A.	Bernardi, M.V. Catalisano, A. Gimigliano and M. Id\`a,
\emph{Secant varieties to osculating varieties of Veronese embeddings of $\mathbb{P}^n$},
J. Algebra 321, pp. 982--1004, 2009.

\bibitem{bccgo} A. Bernardi, E. Carlini, M.V. Catalisano, A. Gimigliano and A. Oneto,
\emph{The hitchhiker guide to: secant varietis and tensor decomposition},
Special issue ``Decomposability of Tensors'', Mathematics 6(12) 314, 2018.

\bibitem{bercar} A. Bernardi and I. Carusotto, 
\emph{Algebraic geometry tools for the study of entanglement: An application to spin squeezed states},
J. Phys. A 45, 105304, 2012.

\bibitem{bdhm} A. Bernardi, N.S. Daleo, J.D. Hauenstein and B. Mourrain,
\emph{Tensor decomposition and homotopy continuation},
Differential Geom. Appl. 55, pp. 78--105, 2017.

\bibitem{bgi}  A. Bernardi,  A. Gimigliano and M. Id\`a,
\emph{Computing symmetric rank for symmetric tensors},
J. Symbolic Comput. 46, pp. 34--53, 2011.

\bibitem{bjmr} A. Bernardi, J. Jelisiejew, P. Macias Marques and K. Ranestad,
\emph{On polynomials with given Hilbert function and applications},
Collect. Math. 69, pp. 39--64, 2018.

\bibitem{Bernardi:2011vn} A. Bernardi and K. Ranestad, 
\emph{On the cactus rank of cubic forms},
J. Symbolic Comput. 50, pp. 291--297, 2013.

\bibitem{SymTensorDec} J. Brachat, P. Comon, B. Mourrain and E. Tsigaridas, \emph{Symmetric tensor decomposition},
Linear Algebra Appl. 433, pp. 1851--1872, 2010.

\bibitem{Buczynska:2010kx} {W. Buczy{\'n}ska and J. Buczy{\'n}ski},
\emph{Secant varieties to high degree Veronese reembeddings, catalecticant matrices and smoothable Gorenstein schemes},
J. Algebraic Geom. 23, pp. 63--90, 2014.
    
\bibitem{bj} J. Buczy{\'n}ski and J. Jelisiejew,
\emph{Finite schemes and secant varieties over arbitrary characteristic},
Differential Geom. Appl. 55, pp. 13--67, 2017.
 
\bibitem{ReducingVariables} E. Carlini,
\emph{Reducing the number of variables of a polynomial},
in M. Elkadi, B. Mourrain, R.Piene (Eds.), Algebraic Geometry and Geometric Modeling, Springer, pp. 237--247, 2005.

\bibitem{ccgo} 	M.V. Catalisano, L. Chiantini, A.V. Geramita and A. Oneto,
\emph{Waring-like decompositions of polynomials},
Linear Algebra Appl. 533, pp. 311--325, 2017.

\bibitem{cgg} M.V. Catalisano, A.V. Geramita and A. Gimigliano,
\emph{On the secant varieties to the tangential varieties of a Veronesean},
Proc. Amer. Math. Soc. 130, pp. 975--985, 2002. 

\bibitem{Ci} C. Ciliberto,
\emph{Geometric aspects of polynomial interpolation in more variables and of Waring's problem},
European Congress of Mathematics (Barcelona), pp. 289--316, 2000. 

\bibitem{ev}  M. Collowald and E. Hubert,
\emph{Algorithms for Computing Cubatures Based on Moment Theory},
Stud. Appl. Math., Special issue 141, pp. 501--546, 2018.

\bibitem{cs} G. Comas and M. Seiguer,
\emph{On the Rank of a Binary Form},
Found. Comput. Math. 11, pp. 65--78, 2011.

\bibitem{comon} P. Comon,
\emph{MA identification using fourth order cumulants},
Signal Process. 26, 381--388, 1992.

\bibitem{UsingGeometry} D. Cox, J. Little and D. O'Shea,
\emph{Using Algebraic Geometry},
Springer-Verlag, New York-Berlin-Heidelberg, 1996.

\bibitem{IdealVarieties} D. Cox, J. Little and D. O'Shea,
\emph{Ideal, Varieties, and algorithms - Third Edition},
Springer-Verlag New York, 2007.

\bibitem{FrenchRef} M. Elkadi and B. Mourrain,
\emph{Introduction \`a la r\'esolution des syst\`emes polynomiaux},
Springer-Verlag Berlin Heidelberg, 1996.


\bibitem{flos} R. Fr\"oberg, S. Lundqvist, A. Oneto and B. Shapiro,
\emph{Algebraic Stories from One and from the Other Pockets},
Arnold Math. J. 4, pp. 137--160, 2018.

\bibitem{gm} F. Galuppi and M. Mella,
\emph{Identifiability of homogeneous polynomials and Cremona transformations},
J. Reine Angew. Math. 757, pp. 279--308, 2019.

\bibitem{ger} A.V.  Geramita,
\emph{Inverse  systems  of  fat  points:   Waring's  problem,  secant  varieties  of Veronese varieties and parameter spaces for Gorenstein ideals},
Queen's Papers in Pure and Appl. Math., vol. 102, pp. 2--114, 1996.

\bibitem{glw} F. Gesmundo, J. M. Landsberg and M. Walter, 
\emph{Matrix product states and the quantum max-flow/min-cut conjectures},
J. Math. Phys. 59, 102205, 2018.

\bibitem{GN} P. Gordan and M. N\"other,
\emph{Ueber die algebraischen Formen, deren Hesse'sche Determinante identisch verschwindet},
Math. Ann. 10, pp. 547--568, 1876.

\bibitem{Hi} D. Hilbert,
\emph{Letter adresse\'e \`a M. Hermite},
Gesam. Abh., vol. II, pp. 148--153, 1888.

\bibitem{hjn} F. Holweck, H. Jaffali and I. Nounouh,
\emph{Grover's algorithm and the secant varieties},
Quantum Inf. Process. 15, pp. 4391--4413, 2016.

\bibitem{IK} A. Iarrobino and  V. Kanev,
\emph{Power sums, Gorenstein algebras, and determinantal loci}, 
Springer-Verlag, Berlin, vol. 1721, 1999.

\bibitem{land} J.M. Landsberg,
\emph{Geometry and Complexity Theory},
Cambridge studies in advances mathematics 169, 2017.

\bibitem{lt} J.M. Landsberg and Z. Teitler,
\emph{On the Ranks and Border Ranks of Symmetric Tensors},
Found. of Comput. Math. 10, pp. 339--366, 2010.

\bibitem{MW} T. Maeno, J. Watanabe,
\emph{Lefschetz  elements of artinian Gorenstein algebras and Hessians of homogeneous polynomials},
Illinois J. Math. 53, pp. 593--603, 2009.

\bibitem{Me} M. Mella,
\emph{Singularities of linear systems and the Waring problem},
Trans. Amer. Math. Soc. 358, pp. 5523--5538, 2006.

\bibitem{BorelFixed} E. Miller, B. Sturmfels,
\emph{Combinatorial Commutative Algebra},
Springer-Verlag New York, 2005.

\bibitem{Mu} B. Mourrain,
\emph{Polynomial-Exponential Decomposition from Moments},
Found. of Comput. Math. 18, pp. 1435--1492, 2018.

\bibitem{oo} L. Oeding and G. Ottaviani,
\emph{Eigenvectors of tensors and algorithms for Waring decomposition},
J. Symbolic Comput. 54, pp. 9--35, 2013.

\bibitem{Pa} A. Palatini,
\emph{Sulla rappresentazione delle forme ternarie mediante la somma di potenze di
forme lineari}, Rom. Acc. L. Rend. 12, pp. 378--384, 1903.

\bibitem{Pe} U. Perazzo,
\emph{Sulle variet\`{a} cubiche la cui hessiana svanisce identicamente},
Giornale di Matematiche (Battaglini) 38, pp. 337--354, 1900.

\bibitem{MR2842085} K. Ranestad and F.-O. Schreyer,
\emph{On the rank of a symmetric form},
J. Algebra 346, pp. 340--342, 2011.

\bibitem{RV} K. Ranestad and C. Voisin,
\emph{Variety of power sums and divisors in the moduli space of cubic fourfolds}, Doc. Math. 22, pp. 443--492, 2017.

\bibitem{Ri} W. Richmond,
\emph{On canonical forms},
Quart. J. Pure Appl. Math. 33, pp. 967--984, 1904.

\bibitem{Sy} J. Sylvester,
\emph{Collected works}
Cambridge University Press, 1904.

\end{thebibliography}
\end{document}